\documentclass{article}%
\usepackage{amsfonts}
\usepackage{amsmath}
\usepackage{geometry}
\usepackage{amssymb}
\usepackage{graphicx}
\usepackage{url}
\usepackage{hyperref}
\usepackage{accents}
\usepackage{afterpage}
\usepackage{endnotes}
\usepackage{lastpage}%
\setcounter{MaxMatrixCols}{30}
\providecommand{\U}[1]{\protect\rule{.1in}{.1in}}
\newtheorem{theorem}{Theorem}[section]
\newtheorem{lemma}[theorem]{Lemma}

\numberwithin{equation}{section}

\newtheorem{proposition}[theorem]{Proposition}

\newenvironment{proof}[1][Proof]{\noindent\textbf{#1.} }{\ \rule{0.5em}{0.5em}}
\begin{document}

\title{Rank $3$ filtered $(\varphi,N)$-modules}
\author{Gerasimos Dousmanis}
\maketitle

\begin{abstract}
We classify $3$-dimensional semistable Frob-semisimple representations of $G_{%
\mathbb{Q}
_{p^{f}}}.$

\end{abstract}

In this paper we classify $3$-dimensional semistable Frob-semisimple
representations of $G_{K}$ where $K$ is the unramified extension of
$\mathbb{Q}_{p}$ of degree $f.$ As in \cite{1}, this easily extends to the
case of $3$-dimensional semisimple representations of $G_{K}$ for any finite
extension $K$ of $\mathbb{Q}_{p},\ $including the non-Frob-semisimple and
Frob-scalar cases. The paper was written in June $2012$ during the Thematic
Program on Galois Representations at Fields Institute. We thank the Fields
Institute for its hospitality and financial support.

Let $p$\ will be a fixed odd integer prime, $K_{f}=\mathbb{Q}_{p^{f}}$\ the
finite unramified extension of $\mathbb{Q}_{p}$\ of degree $f,$\ and $E$\ a
finite, large enough extension of $K_{f}.$\ When the degree of $K_{f}$\ plays
no role we simply write $K.$\ We denote by $\sigma_{K}$\ the absolute
Frobenius of $K;$\ we fix once and for all a distinguished embedding
$K\overset{\tau_{0}}{\hookrightarrow}E$\ and we let $\tau_{i}=\tau_{0}%
\circ\sigma_{K}^{i}$\ for all $i=0,1,...,f-1.$\ We fix the $f$-tuple of
embeddings $\mid\tau\mid:=(\tau_{0},\tau_{1},...,\tau_{f-1})\ $and we denote
$E^{\mid\mathcal{\tau}\mid}:=%
{\textstyle\prod\nolimits_{\tau:K\hookrightarrow E}}
E,\ $with the embeddings ordered as above. The map $\nu:K\otimes_{%
\mathbb{Q}
_{p}}E\rightarrow E^{\mid\mathcal{\tau}\mid}\ $with$\ \nu(x\otimes
y):=(\tau_{i}(x)y)_{\tau_{i}}$ is a ring isomorphism and the ring automorphism
$1_{E}\otimes\sigma:E\otimes K\rightarrow E\otimes K$ transforms via $\nu$ to
the automorphism $\mathcal{\varphi}:E^{\mid\mathcal{\tau}\mid}\rightarrow
E^{\mid\mathcal{\tau}\mid}$ with $\mathcal{\varphi}(x_{0},x_{1},...,x_{f-1}%
)=(x_{1},...,x_{f-1},x_{0}).$ Notice that $\varphi^{f}$ acts trivially on
$E^{\mid\mathcal{\tau}\mid}.$ We denote by $e_{\tau_{i}}=(0,...,1_{i},...,0)$
the idempotent of $E^{\mid\mathcal{\tau}\mid}$ where the $1$ occurs in the
$\tau_{i}$-th coordinate, for each $i\in\{0,1,...,f-1\}.$ \smallskip For any
$\vec{x}\in E^{\mid\mathcal{\tau}\mid},$ we denote by $x(i)$ its $i$-th
coordinate, by $\mathrm{Nm}_{\varphi}(\vec{x})\ $the vector $%
{\textstyle\prod\nolimits_{i=0}^{f-1}}
\varphi^{i}(\vec{x});\ $we write $\mathrm{Nm}_{\varphi}(\vec{x})$ for the
scalar $%
{\textstyle\prod\nolimits_{i=0}^{f-1}}
x(i)$ such that $\mathrm{Nm}_{\varphi}(\vec{x})=%
{\textstyle\prod\nolimits_{i=0}^{f-1}}
x(i)\cdot\vec{1},$ and we define $v_{p}\left(  \mathrm{Nm}_{\varphi}(\vec
{x})\right)  :=$ $v_{p}(%
{\textstyle\prod\nolimits_{i=0}^{f-1}}
x(i)).$ For any matrix $A\ $with entries in $E^{\mid\mathcal{\tau}\mid}$\ we
write $\mathrm{Nm}_{\varphi}(A):=A\varphi(A)\cdot\cdot\cdot\varphi^{f-1}%
(A),$\ with $\varphi$\ acting on each entry of $A.$

\section{Rank three weakly admissible filtered $\varphi$-modules over $K$ with
$E$-coefficients and distinct eigenvalues of frobenius}

Let $\mathrm{MF}_{K,E}^{\varphi}$ be the category of filtered $\varphi
$-modules over $K$ with $E$-coefficients, and let $\mathrm{MF}_{K,E}%
^{\varphi,\mathrm{w.a.}}$ be the subcategory of weakly admissible filtered
$\varphi$-submodules. Let $\left(  D,\varphi\right)  $ be a three-dimensional
object in $\mathrm{MF}_{K,E}^{\varphi}.$ We let $D_{i}:=e_{\tau_{i}}D_{i}$ and
we define $\mathrm{Fil}^{j}D_{i}:=e_{\tau_{i}}\mathrm{Fil}^{j}D$ for all $i$
and $j,$ and for each $i=0,1,...,f-1.$ We define the labeled Hodge-Tate
weights $\mathrm{HT}\left(  D_{i}\right)  \ $of $D\ $with respect to the
embedding $\tau_{i}$ to be the multiset consisting of the opposites of the
jumps of the filtration of $D_{i},$ and after twisting by some rank one weakly
admissible filtered $\varphi$-module we may assume that $\mathrm{HT}\left(
D_{i}\right)  =\{0=k_{0}(i)\leq k_{1}(i)\leq k_{2}(i)\}$ for all $i.$ We say
that the eigenvalues of frobenius of $\left(  D,\varphi\right)  $ are distinct
if $\mathrm{Mat}_{\underline{e}}\left(  \varphi\right)  =\mathrm{diag}\left(
\vec{a},\vec{b},\vec{c}\right)  $ for some ordered basis $\underline{e}\ $and
some $\vec{a},\vec{b},\vec{c}\in\left(  E^{\times}\right)  ^{f}$ with distinct
$\mathrm{Nm}_{\varphi}\left(  \vec{a}\right)  ,\mathrm{Nm}_{\varphi}(\vec
{b})\ $and $\mathrm{Nm}_{\varphi}\left(  \vec{c}\right)  .$ This is equivalent
to $\mathrm{Mat}_{\underline{e}}\left(  \varphi^{f}\right)  $ being diagonal
with distinct diagonal entries of the form $\alpha\cdot\vec{1},$ where
$\alpha\in E^{\times}$ (see Proposition \ref{dist eigen}). We classify rank
$3$ objects of $\mathrm{MF}_{K,E}^{\varphi}$ up to isomorphism and give a
criterion for weak admissibility.

\begin{proposition}
\label{phi, fil}Any rank $3$ filtered $\varphi$-module over $K$ with
$E$-coefficients, distinct eigenvalues of frobenius, and labeled Hodge-Tate
weights $\mathrm{HT}\left(  D_{i}\right)  =\{0=k_{0}(i)\leq k_{1}(i)\leq
k_{2}(i)\}$ for all $i$ is isomorphic to a filtered $\varphi$-module $\left(
D,\varphi\right)  $ with frobenius action defined by a matrix of the form%
\[
\mathrm{Mat}_{\underline{e}}\left(  \varphi\right)  =\mathrm{diag}\left(
\vec{a},\vec{b},\vec{c}\right)
\]
for some ordered basis $\underline{e}$ of $D$ and some $\vec{a},\vec{b}%
,\vec{c}\in\left(  E^{\times}\right)  ^{f}$ with distinct norms $\mathrm{Nm}%
_{\varphi}\left(  \vec{a}\right)  ,\mathrm{Nm}_{\varphi}(\vec{b})\ $and
$\mathrm{Nm}_{\varphi}\left(  \vec{c}\right)  ,$ and filtration%
\[
\mathrm{Fil}^{\mathit{j}}\left(  D\right)  =%
{\textstyle\bigoplus\limits_{i=0}^{f-1}}
\mathrm{Fil}^{\mathit{j}}\left(  D_{i}\right)  ,
\]
where:

\begin{enumerate}
\item If $\mathrm{HT}\left(  D_{i}\right)  =\{0=k_{0}(i)<k_{1}(i)<k_{2}(i)\},$
then%
\begin{equation}
\ \ \text{\ \ \ \ }\mathrm{Fil}^{j}D_{i}=\left\{
\begin{array}
[c]{l}%
e_{\tau_{i}}E^{\mid\mathcal{\tau}\mid}e_{0}\oplus e_{\tau_{i}}E^{\mid
\mathcal{\tau}\mid}e_{1}\oplus e_{\tau_{i}}E^{\mid\mathcal{\tau}\mid}%
e_{2}\ \ \ \ \ \ \ \ \ \ \ \ \ \ \ \ \ \mathrm{if}\text{ }j\leq0,\\
e_{\tau_{i}}E^{\mid\mathcal{\tau}\mid}\left(  e_{0}+x_{2}(i)e_{2}\right)
\oplus e_{\tau_{i}}E^{\mid\mathcal{\tau}\mid}\left(  e_{1}+x_{2}^{\prime
}(i)e_{2}\right)  \ \ \mathrm{if}\text{ }1\leq j\leq k_{1}(i),\\
e_{\tau_{i}}E^{\mid\mathcal{\tau}\mid}\left(  e_{0}+x_{1}(i)e_{1}%
+x_{2}^{\prime\prime}(i)e_{2}\right)
\ \ \ \ \ \ \ \ \ \ \ \ \ \ \ \ \ \ \ \mathrm{if}\text{ }1+k_{1}(i)\leq j\leq
k_{2}(i),\\
\ \ \ \ \ \ \ \ \ \ \ \ \ \ 0\ \ \ \ \ \ \ \ \ \ \ \ \ \ \ \ \ \ \ \ \ \ \ \ \ \ \ \ \ \ \ \ \ \ \ \ \ \ \ \ \ \ \mathrm{if}%
\text{ }1+k_{2}(i)\leq j,
\end{array}
\right.  \tag{$\mathcal{F}_0$}\label{1}%
\end{equation}
with $x_{1}(i)\in E,\ x_{2}(i),x_{2}^{\prime}(i)\in\{0,1\},$ and
$x_{2}^{\prime\prime}(i)=x_{2}(i)+x_{1}(i)x_{2}^{\prime}(i).$

\item If $\mathrm{HT}\left(  D_{i}\right)  =\{0=k_{0}(i)<k_{1}(i)=k_{2}%
(i)\}\ $or$\ \{0=k_{0}(i)=k_{1}(i)<k_{2}(i)\},$ then
\begin{equation}
\ \ \ \ \ \ \ \ \ \ \ \ \ \ \ \ \ \ \mathrm{Fil}^{j}D_{i}=\left\{
\begin{array}
[c]{l}%
e_{\tau_{i}}E^{\mid\mathcal{\tau}\mid}e_{0}\oplus e_{\tau_{i}}E^{\mid
\mathcal{\tau}\mid}e_{1}\oplus e_{\tau_{i}}E^{\mid\mathcal{\tau}\mid}%
e_{2}\ \ \ \ \ \ \ \ \ \ \ \ \ \ \ \ \ \ \ \mathrm{if}\text{ }j\leq0,\\
e_{\tau_{i}}E^{\mid\mathcal{\tau}\mid}\left(  e_{0}+x_{2}(i)e_{2}\right)
\oplus e_{\tau_{i}}E^{\mid\mathcal{\tau}\mid}\left(  e_{1}+x_{2}^{\prime
}(i)e_{2}\right)  \ \ \ \ \mathrm{if}\text{ }1\leq j\leq k(i),\\
\ \ \ \ \ \ \ \ \ \ \ \ \ \ \ \ \ \ \ \ \ \ \ \ \ \ 0\ \ \ \ \ \ \ \ \ \ \ \ \ \ \ \ \ \ \ \ \ \ \ \ \ \ \ \ \ \ \ \ \mathrm{if}%
\text{ }j\geq1+k(i),
\end{array}
\right.  \tag{$\mathcal{F}_1$}\label{fil1}%
\end{equation}
with $x_{2}(i),x_{2}^{\prime}(i)\in\{0,1\}\ $and with $k(i)\ $denoting the
nonzero weight, or of the form
\begin{equation}
\mathrm{Fil}^{\mathit{j}}\left(  e_{\tau_{i}}D\right)  =\left\{
\begin{array}
[c]{l}%
e_{\tau_{i}}E^{\mid\mathcal{\tau}\mid}e_{0}\oplus e_{\tau_{i}}E^{\mid
\mathcal{\tau}\mid}e_{1}\oplus e_{\tau_{i}}E^{\mid\mathcal{\tau}\mid}%
e_{2}\ \mathrm{if}\text{ }j\leq0,\\
e_{\tau_{i}}E^{\mid\mathcal{\tau}\mid}\left(  e_{0}+x_{1}(i)e_{1}%
+x_{2}^{\prime\prime}(i)e_{2}\right)  \ \ \mathrm{if}\text{ }1\leq j\leq
k(i),\\
\ \ \ \ \ \ \ \ \ \ \ \ \ \ \ \ \ 0\ \ \ \ \ \ \ \ \ \ \ \ \ \ \ \ \ \ \ \ \ \ \mathrm{if}%
\text{ }j\geq1+k(i),
\end{array}
\right.  \tag{ $\mathcal{F}_2$}\label{filtr'}%
\end{equation}
with $x_{1}(i),x_{2}^{\prime\prime}(i)\in\{0,1\}.$

\item Finally, if $\mathrm{HT}\left(  D_{i}\right)  =\{0\},$ then
\begin{equation}
\mathrm{Fil}^{j}D_{i}=\left\{
\begin{array}
[c]{l}%
e_{\tau_{i}}E^{\mid\mathcal{\tau}\mid}e_{0}\oplus e_{\tau_{i}}E^{\mid
\mathcal{\tau}\mid}e_{1}\oplus e_{\tau_{i}}E^{\mid\mathcal{\tau}\mid}%
e_{2}\ \mathrm{if}\text{ }j\leq0,\\
\ \ \ \ \ \ \ \ \ \ \ \ \ \ \ \ \ \ \ 0\ \ \ \ \ \ \ \ \ \ \ \ \ \ \ \ \ \ \ \ \mathrm{if}%
\text{ }j\geq1.
\end{array}
\right.  \tag{$\mathcal{F}_3$}\label{triv fil}%
\end{equation}
\noindent\bigskip
\end{enumerate}
\end{proposition}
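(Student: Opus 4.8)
The plan is to first normalize $\varphi$ and then handle the filtration one embedding at a time. By the hypothesis of distinct eigenvalues of frobenius together with Proposition~\ref{dist eigen}, we may choose an ordered basis $\underline{e}=(e_{0},e_{1},e_{2})$ of $D$ with $\mathrm{Mat}_{\underline{e}}(\varphi)=\mathrm{diag}(\vec{a},\vec{b},\vec{c})$, where $\mathrm{Nm}_{\varphi}(\vec{a}),\mathrm{Nm}_{\varphi}(\vec{b}),\mathrm{Nm}_{\varphi}(\vec{c})$ are pairwise distinct; equivalently $\mathrm{Mat}_{\underline{e}}(\varphi^{f})$ is diagonal with pairwise distinct entries of the form $\alpha\cdot\vec{1}$. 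Since $K\otimes_{\mathbb{Q}_{p}}E\cong E^{\mid\tau\mid}=\prod_{i}e_{\tau_{i}}E^{\mid\tau\mid}$, the module and its filtration split as $D=\bigoplus_{i}D_{i}$ and $\mathrm{Fil}^{j}D=\bigoplus_{i}\mathrm{Fil}^{j}D_{i}$, each $D_{i}=e_{\tau_{i}}D$ being a three-dimensional $E$-vector space with basis $e_{\tau_{i}}e_{0},e_{\tau_{i}}e_{1},e_{\tau_{i}}e_{2}$. It therefore suffices, for each $i$ separately, to put the decreasing filtration of $E$-subspaces $\mathrm{Fil}^{\bullet}D_{i}$ into the stated shape while keeping $\varphi$ diagonal.

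I would next pin down exactly which base changes remain available. If $\underline{e}^{\prime}$ is another basis in which $\varphi$ is diagonal with pairwise distinct norms, then $\varphi^{f}$ has pairwise distinct eigenvalues in each factor $E$ of $E^{\mid\tau\mid}$, so the change of basis is monomial (a permutation matrix times a diagonal matrix) in every factor; demanding that $\varphi$ itself, and not merely $\varphi^{f}$, stay diagonal forces the permutation to be the same in all factors. Hence the only moves are: a global permutation of $\{e_{0},e_{1},e_{2}\}$, and rescalings $e_{j}\mapsto\lambda_{j}e_{j}$ with $\lambda_{j}\in(E^{\mid\tau\mid})^{\times}$, whose effect on the $i$-th factor is multiplication by $\lambda_{j}(i)\in E^{\times}$, these scalars being free to choose independently over the different $i$. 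Then I would read off the combinatorics of the filtration from the weights: $\dim_{E}\mathrm{Fil}^{j}D_{i}=\#\{m:k_{m}(i)\geq j\}$, which equals $3$ for $j\leq0$ and drops by the multiplicity of each weight as $j$ crosses it. This yields precisely the four patterns of the statement: a complete flag $0\subset L_{i}\subset P_{i}\subset D_{i}$ when $k_{0}(i)<k_{1}(i)<k_{2}(i)$; a single proper step which is a plane when $0=k_{0}(i)<k_{1}(i)=k_{2}(i)$ and a line when $0=k_{0}(i)=k_{1}(i)<k_{2}(i)$; and nothing proper when all $k_{m}(i)=0$, the last case requiring no further work.

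It remains to bring the plane $P_{i}$ and, when present, the line $L_{i}\subseteq P_{i}$ into the listed normal forms using permutations and rescalings. For a plane, after a suitable permutation one arranges $P_{i}=e_{\tau_{i}}E^{\mid\tau\mid}(e_{0}+x_{2}(i)e_{2})\oplus e_{\tau_{i}}E^{\mid\tau\mid}(e_{1}+x_{2}^{\prime}(i)e_{2})$, and the rescalings available in the $i$-th factor normalize any nonzero $x_{2}(i),x_{2}^{\prime}(i)$ to $1$ while being unable to eliminate the residual binary choices, so $x_{2}(i),x_{2}^{\prime}(i)\in\{0,1\}$. Writing the line inside this plane as $L_{i}=e_{\tau_{i}}E^{\mid\tau\mid}\big((e_{0}+x_{2}(i)e_{2})+x_{1}(i)(e_{1}+x_{2}^{\prime}(i)e_{2})\big)$ with $x_{1}(i)\in E$ (now a genuine modulus, the relevant rescalings having been spent on $P_{i}$) and expanding gives $L_{i}=e_{\tau_{i}}E^{\mid\tau\mid}(e_{0}+x_{1}(i)e_{1}+x_{2}^{\prime\prime}(i)e_{2})$ with $x_{2}^{\prime\prime}(i)=x_{2}(i)+x_{1}(i)x_{2}^{\prime}(i)$, which is the asserted relation; the sub-cases $\mathcal{F}_{1},\mathcal{F}_{2}$ are treated identically with one parameter fewer, and there all surviving parameters can be rescaled into $\{0,1\}$. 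The main obstacle is exactly this last step: keeping careful track, for every position of $\mathrm{Fil}^{\bullet}D_{i}$ relative to the $\varphi$-eigenbasis, of which parameters are scaled away and which remain as genuine (discrete or continuous) invariants, and verifying that a single global permutation always suffices to reach one of the four normal forms.
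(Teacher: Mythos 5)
Your proposal follows essentially the same route as the paper's proof: diagonalize $\varphi$ via Proposition \ref{dist eigen}, split the filtration embedding by embedding, and normalize each flag $L_{i}\subseteq P_{i}$ using the residual monomial automorphisms (a global permutation of $\{e_{0},e_{1},e_{2}\}$ together with coordinatewise rescalings), arriving at the same relation $x_{2}^{\prime\prime}(i)=x_{2}(i)+x_{1}(i)x_{2}^{\prime}(i)$ and the same dichotomy between the binary parameters and the continuous modulus $x_{1}(i)$. The one point you single out as the remaining obstacle---that a single \emph{global} permutation must work for all embeddings $i$ simultaneously---is treated no more explicitly in the paper, which applies permutations ``if necessary'' one embedding at a time.
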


\noindent Let%
\[
\ \ I_{0}:=\{0,1,...,f-1\},
\]%
\[
I_{1}:=\{i\in I_{0}:\mathrm{HT}\left(  D_{i}\right)  =\{0=k_{0}(i)<k_{1}%
(i)<k_{2}(i)\}\},\
\]
and\noindent%
\begin{align*}
&  \ \ \ I_{2}:=\{i\in I_{0}:\mathrm{HT}\left(  D_{i}\right)  =\{0=k_{0}%
(i)=k_{1}(i)<k_{2}(i)\}\ \text{or\ }\mathrm{HT}\left(  D_{i}\right)
=\{0=k_{0}(i)<k_{1}(i)=k_{2}(i)\},\\
&  \text{and\ the\ filtration\ of\ }D_{1}%
\text{\ is\ given\ by\ formula\ (\ref{fil1})}\}.\ \text{In\ this case the
nonzero weight is denoted by}\ k(i).
\end{align*}
Finally, let\noindent%
\begin{align*}
&  I_{3}:=\{i\in I_{0}:\mathrm{HT}\left(  D_{i}\right)  =\{0=k_{0}%
(i)=k_{1}(i)<k_{2}(i)\}\ \text{or\ }\mathrm{HT}\left(  D_{i}\right)
=\{0=k_{0}(i)<k_{1}(i)=k_{2}(i)\},\\
&  \text{and\ the\ filtration\ of\ }D_{1}%
\text{\ is\ given\ by\ formula\ (\ref{filtr'})}\}.
\end{align*}

\begin{proposition}
\label{wa}A filtered $\varphi$-module $\left(  D,\varphi\right)  $ as in
Proposition $\ref{phi, fil}$ is weakly admissible if and only if%
\begin{align}
&  v_{p}\left(  \mathrm{Nm}_{\varphi}\left(  \vec{a}\right)  \right)
+v_{p}\left(  \mathrm{Nm}_{\varphi}(\vec{b})\right)  +v_{p}\left(
\mathrm{Nm}_{\varphi}(\vec{c}\right)  =\sum\limits_{i\in I_{1}}\left(
k_{1}(i)+k_{2}(i)\right)  +\sum\limits_{i\in I_{2}}2k(i)+\sum\limits_{i\in
I_{3}}k(i),\label{9}\\
& \nonumber\\
&  v_{p}\left(  \mathrm{Nm}_{\varphi}\left(  \vec{a}\right)  \right)  \geq
\sum\limits_{\substack{i\in I_{1}\ \mathrm{such\ that}\\x_{2}%
(i)=0\ \mathrm{and}\text{ }x_{1}(i)\not =0}}k_{1}(i)+\sum
\limits_{\substack{i\in I_{1}\ \mathrm{such\ that}\\x_{1}(i)=x_{2}(i)=0}%
}k_{2}(i)\ +\label{10}\\
& \nonumber\\
&  +\sum\limits_{\substack{i\in I_{2}\ \mathrm{such\ that}\\x_{1}%
(i)=0}}k(i)+\sum\limits_{\substack{i\in I_{3}\ \mathrm{such\ that}%
\\x_{1}(i)=x_{2}^{\prime\prime}(i)=0}}k(i),\nonumber
\end{align}%
\begin{equation}
v_{p}\left(  \mathrm{Nm}_{\varphi}(\vec{b})\right)  \geq\sum
\limits_{\substack{i\in I_{1}\ \mathrm{such\ that}\\x_{2}^{\prime}(i)=0}%
}k_{1}(i)+\sum\limits_{\substack{i\in I_{2}\ \mathrm{such\ that}%
\\x_{2}^{\prime}(i)=0}}k(i), \label{11}%
\end{equation}%
\begin{equation}
v_{p}\left(  \mathrm{Nm}_{\varphi}(\vec{c})\right)  \geq0, \label{12}%
\end{equation}%
\begin{align}
v_{p}\left(  \mathrm{Nm}_{\varphi}\left(  \vec{a}\right)  \right)
+v_{p}\left(  \mathrm{Nm}_{\varphi}(\vec{b})\right)  \geq\sum
\limits_{\substack{i\in I_{1}\ \mathrm{such\ that}\\x_{2}^{\prime\prime
}(i)\not =0}}k_{1}(i)+\sum\limits_{\substack{i\in I_{1}\ \mathrm{such\ that}%
\\x_{2}^{\prime\prime}(i)=0\ \mathrm{and}\text{\ }x_{2}^{\prime}(i)=1}%
}k_{2}(i)+  & \label{q}\\
& \nonumber\\
+\sum\limits_{\substack{i\in I_{1}\ \mathrm{such\ that}\text{ }\\x_{2}%
(i)=x_{2}^{\prime}(i)=0}}\left(  k_{1}(i)+k_{2}(i)\right)  +\sum
\limits_{\substack{i\in I_{2}\ \mathrm{such\ that}\text{\ }x_{2}^{\prime
}(i)=1\ \mathrm{or}\\x_{2}^{\prime}(i)=0\ \mathrm{and}\text{ }x_{2}(i)\neq
0}}k(i)+\sum\limits_{\substack{i\in I_{2}\ \mathrm{such\ that}\\x_{2}%
(i)=x_{2}^{\prime}(i)=0}}2k(i)+\sum\limits_{\substack{i\in I_{3}%
\ \mathrm{such\ that}\\\ x_{2}^{\prime\prime}(i)=0}}k(i),  & \nonumber
\end{align}%
\begin{align}
v_{p}\left(  \mathrm{Nm}_{\varphi}\left(  \vec{a}\right)  \right)
+v_{p}\left(  \mathrm{Nm}_{\varphi}(\vec{c})\right)  \geq\sum
\limits_{\substack{i\in I_{1}\ \mathrm{such\ that}\\x_{1}(i)\not =0}%
}k_{1}(i)+\sum\limits_{\substack{i\in I_{1}\ \mathrm{such\ that}\\\text{
}x_{1}(i)=0}}k_{2}(i)+  & \label{13}\\
& \nonumber\\
+\sum\limits_{i\in I_{2}}k(i)+\sum\limits_{\substack{i\in I_{3}%
\ \mathrm{such\ that}\\x_{1}(i)=0}}k(i),  & \nonumber
\end{align}%
\begin{equation}
v_{p}\left(  \mathrm{Nm}_{\varphi}(\vec{b})\right)  +v_{p}\left(
\mathrm{Nm}_{\varphi}(\vec{c})\right)  \geq\sum\limits_{i\in I_{1}}%
k_{1}(i)+\sum\limits_{i\in I_{2}}k(i). \label{16}%
\end{equation}
The filtered $\varphi$-module is irreducible in $\mathrm{MF}_{K_{f}%
,E}^{\varphi,w.a}$ if and only if all inequalities $\left(  \ref{10}\right)
$-$\left(  \ref{16}\right)  $ are strict. Assuming that $\left(
D,\varphi\right)  $ is weakly admissible,

\begin{enumerate}
\item[(a)] The submodule $D_{0}$ is weakly admissible if and only if
inequality $\left(  \ref{10}\right)  $ is equality.

\item[(b)] The submodule $D_{1}$ is weakly admissible if and only if
inequality $\left(  \ref{11}\right)  $ is equality.

\item[(c)] The submodule $D_{2}$ is weakly admissible if and only if
inequality $\left(  \ref{12}\right)  $ is equality.

\item[(d)] The submodule $D_{01}$ is weakly admissible if and only if
inequality $\left(  \ref{q}\right)  $ is equality.

\item[(e)] The submodule $D_{02}$ is weakly admissible if and only if
inequality $\left(  \ref{13}\right)  $ is equality.

\item[(f)] The submodule $D_{12}$ is weakly admissible if and only if
inequality $\left(  \ref{16}\right)  $ is equality.
\end{enumerate}
\end{proposition}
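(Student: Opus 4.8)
\noindent The plan is to use the defining criterion for weak admissibility: $(D,\varphi)$ is weakly admissible iff $t_{N}(D)=t_{H}(D)$ and $t_{N}(D')\ge t_{H}(D')$ for every $\varphi$-stable $E\otimes_{\mathbb{Q}_{p}}K$-submodule $D'\subseteq D$ (with the induced filtration $\mathrm{Fil}^{j}D'=D'\cap\mathrm{Fil}^{j}D$), where $t_{H}(D')=\sum_{i=0}^{f-1}\sum_{j\ge1}\dim_{E}e_{\tau_{i}}(D'\cap\mathrm{Fil}^{j}D)$ and $t_{N}(D')=v_{p}\bigl(\det_{E}(\varphi^{f}\mid e_{\tau_{0}}D')\bigr)$. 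So I would (i) enumerate the $\varphi$-stable submodules, (ii) compute $t_{N}$ and $t_{H}$ for each, (iii) identify the resulting relations with \eqref{9}--\eqref{16}, and (iv) deduce the irreducibility criterion and statements (a)--(f).

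For (i): since $E\otimes_{\mathbb{Q}_{p}}K\cong E^{\mid\tau\mid}=\prod_{i}E$, any submodule splits as $D'=\bigoplus_{i}D'_{i}$ with $D'_{i}=e_{\tau_{i}}D'$, and $\varphi$-stability forces $\varphi$ to map $D'_{i}$ into the adjacent component $D'_{i-1}$ (indices mod $f$), hence $D'_{i}$ is $\varphi^{f}$-stable. By Proposition~\ref{dist eigen} (equivalently, directly from $\mathrm{Mat}_{\underline{e}}(\varphi)=\mathrm{diag}(\vec{a},\vec{b},\vec{c})$) the operator $\varphi^{f}$ acts on $D_{i}$ diagonally in the basis $e_{0},e_{1},e_{2}$ with the three distinct scalar eigenvalues $\mathrm{Nm}_{\varphi}(\vec{a}),\mathrm{Nm}_{\varphi}(\vec{b}),\mathrm{Nm}_{\varphi}(\vec{c})$, so $D'_{i}=\bigoplus_{s\in S_{i}}Ee_{s}$ for some $S_{i}\subseteq\{0,1,2\}$. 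Because $\varphi$ maps the $e_{s}$-line of $D_{i}$ bijectively onto that of $D_{i-1}$ (the relevant component of $\vec{a}$, $\vec{b}$ or $\vec{c}$ lying in $E^{\times}$), we get $S_{i}\subseteq S_{i-1}$ cyclically, so $S_{i}=S$ is independent of $i$. Thus the $\varphi$-stable submodules are exactly the coordinate submodules $D_{S}:=\bigoplus_{s\in S}E^{\mid\tau\mid}e_{s}$, the proper nonzero ones being $D_{0},D_{1},D_{2},D_{01},D_{02},D_{12}$.

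For (ii) and (iii): from $\varphi^{f}e_{s}=\mathrm{Nm}_{\varphi}(\lambda_{s})e_{s}$ with $(\lambda_{0},\lambda_{1},\lambda_{2})=(\vec{a},\vec{b},\vec{c})$ one gets $t_{N}(D_{S})=\sum_{s\in S}v_{p}(\mathrm{Nm}_{\varphi}(\lambda_{s}))$, so $t_{N}(D)$ is the left-hand side of \eqref{9}. For the Hodge numbers one works one embedding at a time, $t_{H}(D_{S})=\sum_{i}t_{H}\bigl((D_{S})_{i}\bigr)$ with $t_{H}\bigl((D_{S})_{i}\bigr)=\sum_{j\ge1}\dim_{E}(D_{S}\cap\mathrm{Fil}^{j}D_{i})$, using the explicit filtrations $(\mathcal{F}_{0})$--$(\mathcal{F}_{3})$ from Proposition~\ref{phi, fil}. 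For one-dimensional $D_{s}$ this is just the largest $j$ with $e_{s}\in\mathrm{Fil}^{j}D_{i}$ (e.g.\ for $(\mathcal{F}_{0})$ one has $e_{0}\in\mathrm{Fil}^{j}D_{i}$ for $1\le j\le k_{1}(i)$ iff $x_{2}(i)=0$ and for $k_{1}(i)<j\le k_{2}(i)$ iff $x_{1}(i)=x_{2}(i)=0$, so $t_{H}((D_{0})_{i})\in\{0,k_{1}(i),k_{2}(i)\}$ accordingly), and summing over $i\in I_{0}=I_{1}\sqcup I_{2}\sqcup I_{3}\sqcup(I_{0}\setminus(I_{1}\cup I_{2}\cup I_{3}))$ reproduces the right-hand sides of \eqref{10}, \eqref{11}, \eqref{12}. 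For two-dimensional $D_{ss'}$ one counts, for each $j$, how many independent $E$-combinations of the listed generators of $\mathrm{Fil}^{j}D_{i}$ lie in $D_{ss'}$ — a condition on $x_{1}(i),x_{2}(i),x_{2}'(i),x_{2}''(i)$ in which the relation $x_{2}''(i)=x_{2}(i)+x_{1}(i)x_{2}'(i)$ enters repeatedly — yielding the right-hand sides of \eqref{q}, \eqref{13}, \eqref{16}, and likewise \eqref{9} for $D$ itself.

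For (iv): \eqref{9} is the equality $t_{N}(D)=t_{H}(D)$, and the six inequalities $t_{N}(D_{S})\ge t_{H}(D_{S})$ for $S\in\{\{0\},\{1\},\{2\},\{0,1\},\{0,2\},\{1,2\}\}$ are exactly \eqref{10}, \eqref{11}, \eqref{12}, \eqref{q}, \eqref{13}, \eqref{16}; hence $(D,\varphi)$ is weakly admissible iff \eqref{9}--\eqref{16} hold. If $D$ is weakly admissible, a $\varphi$-stable submodule $D'$ is itself weakly admissible iff $t_{N}(D')=t_{H}(D')$ (the inequalities for sub-submodules $D''\subseteq D'$ being inherited from $D''\subseteq D$), and for $D'=D_{S}$ this is the equality case of the corresponding relation, which is (a)--(f); moreover $(D,\varphi)$ is irreducible in $\mathrm{MF}^{\varphi,w.a.}_{K_{f},E}$ iff it has no proper nonzero weakly admissible submodule, i.e.\ iff none of \eqref{10}--\eqref{16} is an equality, i.e.\ iff all of them are strict. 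The only genuinely delicate step is (ii): tracking how $\dim_{E}(D_{S}\cap\mathrm{Fil}^{j}D_{i})$ depends on the parameters across the three non-trivial filtration types and the three two-dimensional $S$ — in particular so that all the summands of \eqref{q} come out correctly — while everything else is bookkeeping.
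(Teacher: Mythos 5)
Your proposal is correct and follows essentially the same route as the paper: enumerate the $\varphi$-stable submodules (which, by the distinctness of the eigenvalues of $\varphi^{f}$, are exactly the coordinate submodules $D_{0},D_{1},D_{2},D_{01},D_{02},D_{12}$), compute $t_{N}$ and $t_{H}$ of each via the explicit filtrations $(\mathcal{F}_{0})$--$(\mathcal{F}_{3})$ one embedding at a time, and read off \eqref{9}--\eqref{16} together with the irreducibility criterion and (a)--(f). Your eigenspace-decomposition argument for the enumeration is a slightly cleaner packaging of the paper's direct computation, but the substance is the same.
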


\noindent We denote the filtered module of Proposition \ref{phi, fil} by
$\left(  \mathcal{D}\left(  \underline{a},\underline{x}\right)  \right)  ,$
where $\underline{a}=\left(  \vec{a},\vec{b},\vec{c}\right)  $ and
$\underline{x}=\left(  \vec{x}_{1},\vec{x}_{2},\vec{x}_{2}^{\prime},\vec
{x}_{2}^{\prime\prime}\right)  .$

\begin{proposition}
\label{iso}We have $\left(  \mathcal{D}\left(  \underline{a},\underline
{x}\right)  \right)  \simeq\left(  \mathcal{D}\left(  \underline{a_{1}%
},\underline{y}\right)  \right)  ,$ where $\underline{a_{1}}=\left(  \vec
{a}_{1},\vec{b}_{1},\vec{c}_{1}\right)  $ and $\underline{y}=\left(  \vec
{y}_{1},\vec{y}_{2},\vec{y}_{2}^{\prime},\vec{y}_{2}^{\prime\prime}\right)  $
if and only if either

\begin{enumerate}
\item $\left(  \mathrm{Nm}_{\varphi}(\vec{a}),\mathrm{Nm}_{\varphi}(\vec
{b}),\mathrm{Nm}_{\varphi}(\vec{c})\right)  =\left(  \mathrm{Nm}_{\varphi
}(\vec{a}_{1}),\mathrm{Nm}_{\varphi}(\vec{b}_{1}),\mathrm{Nm}_{\varphi}%
(\vec{c}_{1})\right)  ,$ and

\begin{enumerate}
\item For all $i$ such that the filtration of $D_{i}$ is given by formula
(\ref{1}), either

\begin{enumerate}
\item $x_{2}(i)=y_{2}(i)=x_{2}^{\prime}(i)=y_{2}^{\prime}(i)=1,\ $%
and$\ x_{1}(i)=y_{1}(i)$ or

\item $x_{2}(i)=y_{2}(i)=0,\ x_{2}^{\prime}(i)=y_{2}^{\prime}(i)=1,\ $%
and\ $x_{1}(i)\not =0\ $if\ and\ only\ if\ $y_{1}(i)\neq0,$ or

\item $x_{2}^{\prime}(i)=y_{2}^{\prime}(i)=0,\ x_{2}(i)=1\ $if and only
if\ $y_{2}(i)=1,\ $and\ $x_{1}(i)\neq0\ $if\ and\ only\ if $y_{1}(i)\neq0.$
\end{enumerate}

\item For all $i$ such that the filtration of $D_{i}$ is given by formula
(\ref{fil1}), $x_{2}^{\prime}(i)=y_{2}^{\prime}(i),\ $and $x_{2}(i)=y_{2}(i).$

\item For all $i$ such that the filtration of $D_{i}$ is given by formula
(\ref{filtr'}), $x_{1}(i)=y_{1}(i)$ and $x_{2}^{\prime\prime}(i)=y_{2}%
^{\prime\prime}(i).$
\end{enumerate}

\item $\left(  \mathrm{Nm}_{\varphi}(\vec{a}),\mathrm{Nm}_{\varphi}(\vec
{b}),\mathrm{Nm}_{\varphi}(\vec{c})\right)  =\left(  \mathrm{Nm}_{\varphi
}(\vec{b}_{1}),\mathrm{Nm}_{\varphi}(\vec{a}_{1}),\mathrm{Nm}_{\varphi}%
(\vec{c}_{1})\right)  ,$ and

\begin{enumerate}
\item For all $i$ such that the filtration of $D_{i}$ is given by formula
(\ref{1}), either

\begin{enumerate}
\item \bigskip$x_{2}=y_{2}=x_{2}^{\prime}=y_{2}^{\prime}=0,\ $and $x_{1}%
y_{1}\neq0,$ or

\item $x_{2}^{\prime}=y_{2}=0,\ x_{2}=y_{2}^{\prime}=1,\ $and$\ y_{1}x_{1}%
\neq0,$ or

\item $x_{2}=y_{2}^{\prime}=0,\ x_{2}^{\prime}=y_{2}=1,\ $and $x_{1}y_{1}%
\neq0.$
\end{enumerate}

\item For all $i$ such that the filtration of $D_{i}$ is given by formula
(\ref{fil1}), $x_{2}^{\prime}(i)=y_{2}(i),$ and $y_{2}^{\prime}(i)=x_{2}(i).$

\item For all $i$ such that the filtration of $D_{i}$ is given by formula
(\ref{filtr'}), $x_{1}(i)=y_{1}(i)=1$ and $x_{2}^{\prime\prime}(i)=y_{2}%
^{\prime\prime}(i).$
\end{enumerate}

\item $\left(  \mathrm{Nm}_{\varphi}(\vec{a}),\mathrm{Nm}_{\varphi}(\vec
{b}),\mathrm{Nm}_{\varphi}(\vec{c})\right)  =\left(  \mathrm{Nm}_{\varphi
}(\vec{c}_{1}),\mathrm{Nm}_{\varphi}(\vec{a}_{1}),\mathrm{Nm}_{\varphi}%
(\vec{b}_{1})\right)  ,$ and

\begin{enumerate}
\item For all $i$ such that the filtration of $D_{i}$ is given by formula
(\ref{1}), either

\begin{enumerate}
\item $x_{2}^{\prime}(i)=y_{2}(i)=0,\ x_{2}(i)=y_{2}^{\prime}(i)=1,\ $and
$x_{1}(i)y_{1}(i)\neq0,$ or

\item $x_{2}(i)=y_{2}(i)=x_{2}^{\prime}(i)=y_{2}^{\prime}(i)=1,\ $and
$x_{1}(i)y_{1}(i)+x_{1}(i)+1=0.$
\end{enumerate}

\item For all $i$ such that the filtration of $D_{i}$ is given by formula
(\ref{fil1}), $x_{2}(i)=y_{2}^{\prime}(i)=1,$ and $x_{2}^{\prime}(i)=1$ if and
only if $y_{2}(i)=1.$

\item For all $i$ such that the filtration of $D_{i}$ is given by formula
(\ref{filtr'}), $x_{1}(i)=y_{2}^{\prime\prime}(i)=1$ and $x_{2}^{\prime\prime
}(i)=1$ if and only if $y_{1}(i)=1.$
\end{enumerate}

\item $\left(  \mathrm{Nm}_{\varphi}(\vec{a}),\mathrm{Nm}_{\varphi}(\vec
{b}),\mathrm{Nm}_{\varphi}(\vec{c})\right)  =\left(  \mathrm{Nm}_{\varphi
}(\vec{a}_{1}),\mathrm{Nm}_{\varphi}(\vec{c}_{1}),\mathrm{Nm}_{\varphi}%
(\vec{b}_{1})\right)  ,$ and

\begin{enumerate}
\item For all $i$ such that the filtration of $D_{i}$ is given by formula
(\ref{1}), either

\begin{enumerate}
\item $x_{2}(i)=y_{2}(i)=y_{2}^{\prime}(i)=x_{2}^{\prime}(i)=1,\ $%
and$\ x_{1}(i)+y_{1}(i)+1=0,$ or

\item $x_{2}(i)=y_{2}(i)=0,\ x_{2}^{\prime}(i)=y_{2}^{\prime}(i)=1,\ $%
and\ $x_{1}(i)\neq0\ $if\ and\ only\ if\ $y_{1}(i)\neq0.$
\end{enumerate}

\item For all $i$ such that the filtration of $D_{i}$ is given by formula
(\ref{fil1}), $x_{2}^{\prime}(i)=y_{2}^{\prime}(i)=1$ and$\ x_{2}%
(i)=y_{2}(i).$

\item For all $i$ such that the filtration of $D_{i}$ is given by formula
(\ref{filtr'}), $x_{2}^{\prime\prime}(i)=y_{1}(i)$ and $x_{1}(i)=y_{2}%
^{\prime\prime}(i).$
\end{enumerate}

\item $\left(  \mathrm{Nm}_{\varphi}(\vec{a}),\mathrm{Nm}_{\varphi}(\vec
{b}),\mathrm{Nm}_{\varphi}(\vec{c})\right)  =\left(  \mathrm{Nm}_{\varphi
}(\vec{c}_{1}),\mathrm{Nm}_{\varphi}(\vec{b}_{1}),\mathrm{Nm}_{\varphi}%
(\vec{a}_{1})\right)  ,$ and

\begin{enumerate}
\item For all $i$ such that the filtration of $D_{i}$ is given by formula
(\ref{1}), either

\begin{enumerate}
\item $x_{2}(i)=y_{2}(i)=x_{2}^{\prime}(i)=y_{2}^{\prime}(i)=1,\ x_{1}%
(i)\neq0,\ $and $x_{1}(i)\left(  y_{1}(i)+1\right)  +y_{1}(i)=0,$ or

\item $x_{1}(i)=y_{1}(i)=0,\ $and $x_{2}(i)=y_{2}(i)=x_{2}^{\prime}%
(i)=y_{2}^{\prime}(i)=1,$ or

\item $x_{1}(i)=y_{1}(i)=x_{2}^{\prime}(i)=y_{2}^{\prime}(i)=0,\ $and
$x_{2}(i)=y_{2}(i)=1,$ or

\item $x_{2}^{\prime}(i)=y_{2}^{\prime}(i)=0,\ x_{2}(i)=y_{2}(i)=1,$ and
$x_{1}(i)y_{1}(i)\neq0.$
\end{enumerate}

\item For all $i$ such that the filtration of $D_{i}$ is given by formula
(\ref{fil1}), $x_{2}(i)=y_{2}(i)=1$ and $y_{2}^{\prime}(i)=x_{2}^{\prime}(i).$

\item For all $i$ such that the filtration of $D_{i}$ is given by formula
(\ref{filtr'}), $x_{2}^{\prime\prime}(i)=y_{2}^{\prime\prime}(i)=1$ and
$x_{1}(i)=y_{1}(i).$
\end{enumerate}

\item $\left(  \mathrm{Nm}_{\varphi}(\vec{a}),\mathrm{Nm}_{\varphi}(\vec
{b}),\mathrm{Nm}_{\varphi}(\vec{c})\right)  =\left(  \mathrm{Nm}_{\varphi
}(\vec{b}_{1}),\mathrm{Nm}_{\varphi}(\vec{c}_{1}),\mathrm{Nm}_{\varphi}%
(\vec{a}_{1})\right)  ,$ and

\begin{enumerate}
\item For all $i$ such that the filtration of $D_{i}$ is given by formula
(\ref{1}), either

\begin{enumerate}
\item $x_{2}(i)=y_{2}(i)=x_{2}^{\prime}(i)=y_{2}^{\prime}(i)=1,\ $and
$y_{1}(i)\left(  x_{1}(i)+1\right)  +1=0,$ or

\item $x_{2}^{\prime}(i)=y_{2}(i)=1,\ x_{2}(i)=y_{2}^{\prime}(i)=0,\ $and
$x_{1}(i)y_{1}(i)\neq0.$\bigskip
\end{enumerate}

\item For all $i$ such that the filtration of $D_{i}$ is given by formula
(\ref{fil1}), $x_{2}^{\prime}(i)=y_{2}(i)=1$ and $y_{2}^{\prime}(i)=x_{2}(i).$

\item For all $i$ such that the filtration of $D_{i}$ is given by formula
(\ref{filtr'}), $x_{2}^{\prime\prime}(i)=y_{1}(i)=1$ and $x_{1}(i)=y_{2}%
^{\prime\prime}(i).$\bigskip\bigskip
\end{enumerate}
\end{enumerate}
\end{proposition}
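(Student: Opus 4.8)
The plan is to reduce the classification to an embedding-by-embedding problem about $3$-step flags in $E^{3}$ acted on by monomial matrices, and then to carry out the resulting finite case analysis. Suppose $\Phi\colon\mathcal{D}(\underline a,\underline x)\to\mathcal{D}(\underline{a_{1}},\underline y)$ is an isomorphism in $\mathrm{MF}_{K,E}^{\varphi}$, that is an $E^{\mid\tau\mid}$-linear bijection with $\Phi\circ\varphi=\varphi\circ\Phi$ and $\Phi(\mathrm{Fil}^{j})=\mathrm{Fil}^{j}$ for all $j$. First I would pin down the shape of $\Phi$. Since $\varphi^{f}$ acts $E^{\mid\tau\mid}$-linearly and, in the basis $\underline e$, is the diagonal matrix with the distinct scalar entries $\mathrm{Nm}_{\varphi}(\vec a)\cdot\vec 1,\ \mathrm{Nm}_{\varphi}(\vec b)\cdot\vec 1,\ \mathrm{Nm}_{\varphi}(\vec c)\cdot\vec 1$ (the discussion preceding Proposition \ref{phi, fil}, cf. Proposition \ref{dist eigen}), the three rank-one free submodules $E^{\mid\tau\mid}e_{0},E^{\mid\tau\mid}e_{1},E^{\mid\tau\mid}e_{2}$ are intrinsically characterised as the eigenspaces of the $E^{\mid\tau\mid}$-linear operator $\varphi^{f}$; the same holds in the target. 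As $\Phi$ is $E^{\mid\tau\mid}$-linear and commutes with $\varphi^{f}$, it carries these submodules among themselves, so $\Phi(e_{j})=\vec\lambda_{j}\,e_{\pi(j)}$ (the $e_{\pi(j)}$ being the distinguished basis of the target) for a permutation $\pi\in S_{3}$ and vectors $\vec\lambda_{j}\in(E^{\times})^{f}$, and necessarily $\mathrm{Nm}_{\varphi}(\vec d'_{\pi(j)})=\mathrm{Nm}_{\varphi}(\vec d_{j})$, where $(\vec d_{0},\vec d_{1},\vec d_{2})=(\vec a,\vec b,\vec c)$ and $(\vec d'_{0},\vec d'_{1},\vec d'_{2})=(\vec a_{1},\vec b_{1},\vec c_{1})$. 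Since the norms are distinct, $\pi$ is uniquely determined, so this is exactly the dichotomy into the six mutually exclusive cases of the statement. Expanding $\Phi\circ\varphi=\varphi\circ\Phi$ on each $e_{j}$ gives the system $\vec\lambda_{j}/\varphi(\vec\lambda_{j})=\vec d'_{\pi(j)}/\vec d_{j}$; since the right-hand side has $\mathrm{Nm}_{\varphi}=1$ and $H^{1}$ of $\langle\varphi\rangle$ on $(E^{\times})^{f}$ vanishes (Shapiro's lemma for the cyclic shift), each such system is solvable, with solution set a torsor under the diagonal $E^{\times}$.

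The next step is to translate the condition $\Phi(\mathrm{Fil}^{j})=\mathrm{Fil}^{j}$. Being $E^{\mid\tau\mid}$-linear, $\Phi$ respects the idempotent decomposition $D=\bigoplus_{i}e_{\tau_{i}}D$, and since both filtrations are the direct sums $\bigoplus_{i}\mathrm{Fil}^{j}D_{i}$ of Proposition \ref{phi, fil}, $\Phi$ is filtered if and only if for every $i$ the induced $E$-linear isomorphism $e_{\tau_{i}}D\to e_{\tau_{i}}D'$ takes the flag of $D_{i}$ to that of $D'_{i}$; this induced map is the composite of the diagonal map with entries $(\lambda_{0}(i),\lambda_{1}(i),\lambda_{2}(i))$ and the coordinate permutation $\pi$. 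For $i\in I_{1}$ (filtration (\ref{1})) one has the full flag $\langle e_{0}+x_{1}(i)e_{1}+x_{2}^{\prime\prime}(i)e_{2}\rangle\subset\langle e_{0}+x_{2}(i)e_{2},\,e_{1}+x_{2}^{\prime}(i)e_{2}\rangle\subset E^{3}$, and requiring its image to be of the same shape with data $y_{\bullet}(i)$ forces, using that the $\lambda_{j}(i)$ are units while $x_{2}(i),x_{2}^{\prime}(i),y_{2}(i),y_{2}^{\prime}(i)\in\{0,1\}$, certain ratios $\lambda_{j}(i)/\lambda_{j'}(i)$ to equal $1$ and produces the combinatorial relation between $(x_{1}(i),x_{2}(i),x_{2}^{\prime}(i))$ and $(y_{1}(i),y_{2}(i),y_{2}^{\prime}(i))$; a short enumeration of how the three basis vectors can be matched yields precisely the sub-cases listed under item (a) in each of the six cases --- for instance the relation $x_{1}(i)y_{1}(i)+x_{1}(i)+1=0$ in case $3$ comes out of the normalization forced by $e_{1}+x_{2}^{\prime}(i)e_{2}$ once $\pi$ is the relevant $3$-cycle. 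The embeddings $i\in I_{2}$ (filtration (\ref{fil1})) and $i\in I_{3}$ (filtration (\ref{filtr'})) are treated in the same way with a length-two flag, giving items (b) and (c); those with $\mathrm{HT}(D_{i})=\{0\}$ impose no condition. One checks along the way that $\Phi$ sends type (\ref{fil1}) to type (\ref{fil1}) and type (\ref{filtr'}) to type (\ref{filtr'}), because the shape of the normal-form flag attached to a given Hodge--Tate pattern is intrinsic; in particular $I_{1},I_{2},I_{3}$ are preserved. Running this for all six choices of $\pi$ produces the six lists. A labour-saving option is to do $\pi=\mathrm{id}$ and one transposition by hand and obtain the other four cases by composing isomorphisms and re-normalizing, though the direct computation is of comparable length.

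For the converse I would reverse the construction: assuming one of the six combinatorial conditions holds, build $\Phi$ by $\Phi(e_{j})=\vec\lambda_{j}e_{\pi(j)}$ where the $\vec\lambda_{j}$ are chosen to satisfy both the cohomological equations $\vec\lambda_{j}/\varphi(\vec\lambda_{j})=\vec d'_{\pi(j)}/\vec d_{j}$ and the finitely many ratio constraints on the $\lambda_{j}(i)$ found above; then $\varphi$-equivariance and filtration-compatibility hold coordinate-by-coordinate by construction, so $\Phi$ is an isomorphism. The hard part --- and the main obstacle I anticipate --- is exactly the verification that the conditions written in the statement are the correct ones, i.e.\ that they are precisely what is needed for this last system of constraints on the $\lambda_{j}(i)$ to be simultaneously solvable, given the freedom of the three $E^{\times}$-torsors and of replacing $\vec a,\vec b,\vec c$ within their $\mathrm{Nm}_{\varphi}$-classes. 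Concretely, after fixing particular solutions $\vec\lambda_{j}^{(0)}$ and writing $\vec\lambda_{j}=c_{j}\vec\lambda_{j}^{(0)}$ with $c_{j}\in E^{\times}$, one must show that the identities forced among the $\lambda_{j}(i)/\lambda_{j'}(i)$ can always be met; this is what separates the possibilities ``$x_{1}(i)=y_{1}(i)$'', ``$x_{1}(i)\neq 0$ iff $y_{1}(i)\neq 0$'', and the bilinear relations such as $x_{1}(i)(y_{1}(i)+1)+y_{1}(i)=0$, according to whether the pertinent ratio is rigidly $1$, free in $E^{\times}$, or tied through the flag normalizations. Carrying this bookkeeping through for each of the six permutations completes both directions.
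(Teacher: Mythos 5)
Your skeleton is exactly the paper's: use $h\varphi^{f}=\varphi_{1}^{f}h$ together with the distinctness of the $\varphi^{f}$-eigenvalues to force the matrix of $h$ to be monomial, read off the six permutation cases from the matching of norms, and then translate $h(\mathrm{Fil}^{j})=\mathrm{Fil}^{j}$ embedding by embedding into conditions relating $(x_{1}(i),x_{2}(i),x_{2}^{\prime}(i))$, $(y_{1}(i),y_{2}(i),y_{2}^{\prime}(i))$ and the ratios of the nonzero entries of $h$ at the $i$-th coordinate. The difficulty is that this is only a strategy: the entire content of Proposition \ref{iso} is the explicit $6\times 3$ table of conditions, and your proposal asserts ("a short enumeration \ldots yields precisely the sub-cases listed") rather than derives them. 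The paper's proof consists almost entirely of that enumeration --- for each permutation and each of the filtration types (\ref{1}), (\ref{fil1}), (\ref{filtr'}) it writes out the linear conditions such as $h_{11}(i)y_{2}(i)=x_{2}(i)h_{33}(i)$, $y_{1}(i)h_{11}(i)=x_{1}(i)h_{22}(i)$, etc., and eliminates the $h_{jk}(i)$. Until you carry out at least one permutation case in full (say $\pi=\mathrm{id}$ and one transposition, as you suggest), you have not proved anything that distinguishes, e.g., the rigid condition $x_{1}(i)=y_{1}(i)$ of case 1(a)i from the flexible condition ``$x_{1}(i)\neq 0$ iff $y_{1}(i)\neq 0$'' of case 1(a)ii, which is precisely where the content lies.

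That said, you put your finger on a point that the paper itself does not resolve. The paper derives, for each $i$ separately, conditions involving the coordinates $h_{jk}(i)$ and then quantifies over them as if they were independent free parameters at each embedding; but the full relation $h\varphi=\varphi_{1}h$ (not just its $f$-th power) ties the coordinates of each $\vec h_{jk}$ together, leaving only one free scalar per nonzero entry, exactly the torsor structure you describe. Consequently the conditions at different $i$ that pin down ratios such as $h_{11}(i)=h_{22}(i)$ must be checked for \emph{simultaneous} solvability, and this is sensitive to the choice of representatives $\vec a,\vec b,\vec c$ within their norm classes (for instance, with $f=2$, $\vec a_{1}=\vec a$, $\vec c_{1}=\vec c$, $\vec b_{1}=(tb(0),b(1)/t)$ and both embeddings in sub-case 1(a)i, the required identities force $t=1$). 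So your ``main obstacle'' is real and is in fact the step on which the statement's correctness hinges; a complete proof must either normalize $\vec a,\vec b,\vec c$ (e.g.\ to $(\alpha,1,\dots,1)$) so that the particular solutions of $\vec\lambda_{j}/\varphi(\vec\lambda_{j})=\vec d^{\prime}_{\pi(j)}/\vec d_{j}$ have constant coordinate ratios, or explicitly verify solvability of the resulting system in the $c_{j}$. Your proposal names this issue but does not settle it, so both directions of the equivalence remain unproved as written.
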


\begin{proposition}
\label{semistable}Let $\left(  D,\varphi,N\right)  $ be a filtered $\left(
\varphi,N\right)  $-module over $K$ with $E$ coefficients and distinct
eigenvalues of frobenius. There exists some ordered bases $\underline{e}$ of
$D$ over $E^{\mid\mathcal{\tau}\mid}$ such that $\mathrm{Mat}_{\underline{e}%
}\left(  \varphi\right)  =\mathrm{diag}\left(  \vec{a},\vec{b},\vec{c}\right)
$ with distinct $\mathrm{Nm}_{\varphi}(\vec{a}),\mathrm{Nm}_{\varphi}(\vec
{b}),\mathrm{Nm}_{\varphi}(\vec{c})$ and filtration as in Proposition
\ref{phi, fil}. The matrix $[N]_{\underline{e}}$ of the monodromy operator
with respect to the basis $\underline{e}$ has one of the following forms with
\underline{at most} two nonzero entries:

\begin{enumerate}
\item
\[
\lbrack N]_{\underline{e}}=\left(
\begin{array}
[c]{ccc}%
\vec{0} & \vec{a}_{12} & \vec{0}\\
\vec{0} & \vec{0} & \vec{a}_{23}\\
\vec{a}_{31} & \vec{0} & \vec{0}%
\end{array}
\right)  ,
\]
or

\item
\[
\lbrack N]_{\underline{e}}=\left(
\begin{array}
[c]{ccc}%
\vec{0} & \vec{0} & \vec{a}_{13}\\
\vec{a}_{21} & \vec{0} & \vec{0}\\
\vec{0} & \vec{a}_{32} & \vec{0}%
\end{array}
\right)  ,
\]
or

\item
\[
\lbrack N]_{\underline{e}}=\left(
\begin{array}
[c]{ccc}%
\vec{0} & \vec{a}_{12} & \vec{0}\\
\vec{0} & \vec{0} & \vec{a}_{23}\\
\vec{a}_{31} & \vec{0} & \vec{0}%
\end{array}
\right)  ,
\]

\end{enumerate}

where%
\begin{align*}
\vec{a}_{12}  &  =a_{12}\left(  1,\frac{b\left(  0\right)  }{pa\left(
0\right)  },\frac{b\left(  0\right)  b\left(  1\right)  }{p^{2}a\left(
0\right)  a\left(  1\right)  },\cdots,\frac{b\left(  0\right)  b\left(
1\right)  \cdots b\left(  f-2\right)  }{p^{f-1}a\left(  0\right)  a\left(
1\right)  \cdots a\left(  f-2\right)  }\right)  \ \text{for some }a_{12}\in
E,\\
\vec{a}_{13}  &  =a_{13}\left(  1,\frac{c\left(  0\right)  }{pa\left(
0\right)  },\frac{c\left(  0\right)  c\left(  1\right)  }{p^{2}a\left(
0\right)  a\left(  1\right)  },\cdots,\frac{c\left(  0\right)  c\left(
1\right)  \cdots c\left(  f-2\right)  }{p^{f-1}a\left(  0\right)  a\left(
1\right)  \cdots a\left(  f-2\right)  }\right)  \ \text{for some }a_{13}\in
E,\\
\vec{a}_{21}  &  =a_{21}\left(  1,\frac{a\left(  0\right)  }{pb\left(
0\right)  },\frac{a\left(  0\right)  a\left(  1\right)  }{p^{2}b\left(
0\right)  b\left(  1\right)  },\cdots,\frac{a\left(  0\right)  a\left(
1\right)  \cdots a\left(  f-2\right)  }{p^{f-1}b\left(  0\right)  b\left(
1\right)  \cdots b\left(  f-2\right)  }\right)  \ \text{for some }a_{21}\in
E,\\
\vec{a}_{23}  &  =a_{23}\left(  1,\frac{c\left(  0\right)  }{pb\left(
0\right)  },\frac{c\left(  0\right)  c\left(  1\right)  }{p^{2}b\left(
0\right)  b\left(  1\right)  },\cdots,\frac{c\left(  0\right)  c\left(
1\right)  \cdots c\left(  f-2\right)  }{p^{f-1}b\left(  0\right)  b\left(
1\right)  \cdots b\left(  f-2\right)  }\right)  \ \text{for some }a_{23}\in
E,\\
\vec{a}_{31}  &  =a_{31}\left(  1,\frac{a\left(  0\right)  }{pc\left(
0\right)  },\frac{a\left(  0\right)  a\left(  1\right)  }{p^{2}c\left(
0\right)  c\left(  1\right)  },\cdots,\frac{a\left(  0\right)  a\left(
1\right)  \cdots a\left(  f-2\right)  }{p^{f-1}c\left(  0\right)  c\left(
1\right)  \cdots c\left(  f-2\right)  }\right)  \ \text{for some }a_{31}\in
E,\\
\vec{a}_{32}  &  =a_{32}\left(  1,\frac{b\left(  0\right)  }{pc\left(
0\right)  },\frac{b\left(  0\right)  b\left(  1\right)  }{p^{2}c\left(
0\right)  c\left(  1\right)  },\cdots,\frac{b\left(  0\right)  b\left(
1\right)  \cdots b\left(  f-2\right)  }{p^{f-1}c\left(  0\right)  c\left(
1\right)  \cdots c\left(  f-2\right)  }\right)  \ \text{for some }a_{32}\in E.
\end{align*}
If $a_{12}\neq0$ then $\mathrm{Nm}_{\varphi}(\vec{b})=p^{f}\mathrm{Nm}%
_{\varphi}(\vec{a}),$ if $a_{31}\neq0$ then $\mathrm{Nm}_{\varphi}(\vec
{a})=p^{f}\mathrm{Nm}_{\varphi}(\vec{c})$ and if $a_{23}\neq0$ then
$\mathrm{Nm}_{\varphi}(\vec{c})=p^{f}\mathrm{Nm}_{\varphi}(\vec{b}),$ if
$a_{13}\neq0$ then $\mathrm{Nm}_{\varphi}(\vec{c})=p^{f}\mathrm{Nm}_{\varphi
}(\vec{a}),$ if $a_{32}\neq0$ then $\mathrm{Nm}_{\varphi}(\vec{b}%
)=p^{f}\mathrm{Nm}_{\varphi}(\vec{c}),$ and if $a_{21}\neq0$ then
$\mathrm{Nm}_{\varphi}(\vec{c})=p^{f}\mathrm{Nm}_{\varphi}(\vec{b}).$
\end{proposition}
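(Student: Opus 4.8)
The plan is to read off every constraint on $[N]_{\underline{e}}$ from the single identity $N\varphi=p\varphi N$, working in the basis furnished by Proposition \ref{phi, fil}. First I would reduce to the standard model: apply Proposition \ref{phi, fil} to the underlying filtered $\varphi$-module and transport $N$ along the resulting isomorphism; since that isomorphism is one of filtered $\varphi$-modules, the transported operator is again $E^{\mid\tau\mid}$-linear and still satisfies $N\varphi=p\varphi N$, so I may assume $\mathrm{Mat}_{\underline{e}}(\varphi)=P:=\mathrm{diag}(\vec{a},\vec{b},\vec{c})$ with $\mathrm{Nm}_\varphi(\vec{a}),\mathrm{Nm}_\varphi(\vec{b}),\mathrm{Nm}_\varphi(\vec{c})$ pairwise distinct and the filtration in one of the standard shapes. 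Writing $[N]=(n_{jk})$ and using that $\varphi$ is semilinear for the coordinate shift on $E^{\mid\tau\mid}$, the relation $N\varphi=p\varphi N$ becomes the matrix identity $[N]\,P=p\,P\,\varphi([N])$, i.e. $\varphi([N])=\tfrac{1}{p}P^{-1}[N]P$. Since $P$ is diagonal with entries $(\vec{d}_1,\vec{d}_2,\vec{d}_3):=(\vec{a},\vec{b},\vec{c})$, the $(j,k)$-entry reads $\varphi(n_{jk})=\tfrac{1}{p}\,\vec{d}_j^{-1}\vec{d}_k\,n_{jk}$, which componentwise (recall $\varphi(\vec{x})(i)=x(i+1)$) is the recursion
\[
n_{jk}(i+1)\;=\;\tfrac{1}{p}\,\tfrac{d_k(i)}{d_j(i)}\,n_{jk}(i),\qquad i\in\mathbb{Z}/f\mathbb{Z}.
\]

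Next I would solve this recursion. Putting $a_{jk}:=n_{jk}(0)\in E$, it forces $n_{jk}(i)=a_{jk}\prod_{l=0}^{i-1}\tfrac{d_k(l)}{p\,d_j(l)}$, which is precisely the vector exhibited in the statement for each of $\vec{a}_{12},\dots,\vec{a}_{32}$. Requiring the recursion to be consistent around the full cycle $0\to 1\to\cdots\to f-1\to 0$ gives $a_{jk}=a_{jk}\cdot\mathrm{Nm}_\varphi(\vec{d}_k)/(p^{f}\mathrm{Nm}_\varphi(\vec{d}_j))$, so for each $(j,k)$ either $a_{jk}=0$ or $\mathrm{Nm}_\varphi(\vec{d}_k)=p^{f}\mathrm{Nm}_\varphi(\vec{d}_j)$; this yields the stated implications (for instance $a_{12}\neq 0\Rightarrow\mathrm{Nm}_\varphi(\vec{b})=p^{f}\mathrm{Nm}_\varphi(\vec{a})$). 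Specialising to $j=k$, where $\vec{d}_j^{-1}\vec{d}_k=\vec{1}$, gives $a_{jj}=a_{jj}/p^{f}$, hence $n_{jj}\equiv 0$: the diagonal of $[N]_{\underline{e}}$ vanishes.

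It remains to bound the number of nonzero off-diagonal entries and pin down their positions. I would encode a nonzero entry $a_{jk}$ as an oriented edge $j\to k$ on the vertex set $\{1,2,3\}$ carrying the relation $\mathrm{Nm}_\varphi(\vec{d}_k)=p^{f}\mathrm{Nm}_\varphi(\vec{d}_j)$. Because the three norms are pairwise distinct, no vertex can be the source of two edges nor the target of two edges — equivalently $[N]_{\underline{e}}$ has at most one nonzero entry in each row and in each column; and because $p$ is not a root of unity there is no directed $2$-cycle (it would give $\mathrm{Nm}_\varphi(\vec{d}_j)=p^{2f}\mathrm{Nm}_\varphi(\vec{d}_j)$) and no directed $3$-cycle. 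A digraph on three vertices with in- and out-degree at most $1$ and no directed cycle has at most two edges, and two edges necessarily form a directed path $j\to k\to l$ visiting all three vertices; the two resulting position-patterns $\{(1,2),(2,3),(3,1)\}$ and $\{(1,3),(2,1),(3,2)\}$ — in which at most two slots are filled — are exactly the displayed forms of $[N]_{\underline{e}}$, the case of at most one nonzero entry (including $N=0$) being subsumed. In every case the resulting matrix is strictly triangular after the permutation of basis vectors dictated by the path, so $N$ is automatically nilpotent and no hypothesis beyond $N\varphi=p\varphi N$ is actually used.

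The computations are routine once the bookkeeping is fixed; the only steps demanding care are the passage from $N\varphi=p\varphi N$ to the entrywise recursion (keeping the semilinear shift and the factor $p$ on the correct side) and verifying that the short combinatorial classification of admissible edge-patterns is genuinely exhaustive. Neither is a real obstacle.
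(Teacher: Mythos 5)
Your proposal is correct and follows essentially the same route as the paper: both derive the entrywise relation $[N]_{\underline{e}}P=pP\varphi([N]_{\underline{e}})$ from $N\varphi=p\varphi N$, solve the resulting twisted recursion to get the explicit form of each $\vec{a}_{jk}$ together with the norm condition $\mathrm{Nm}_{\varphi}(\vec{d}_k)=p^{f}\mathrm{Nm}_{\varphi}(\vec{d}_j)$ (the paper isolates this as Lemma \ref{norm lemma} and also reads it off from $N\varphi^{f}=p^{f}\varphi^{f}N$, whereas you obtain it from cyclic consistency of the recursion — the same computation), and then use distinctness of the norms to kill the diagonal and most off-diagonal entries. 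Your digraph packaging of the final step (in/out-degree at most one, no directed $2$- or $3$-cycles, hence at most two edges forming a path contained in one of the two cyclic patterns) is a cleaner and visibly exhaustive version of the paper's explicit case-by-case elimination, but it proves the same thing by the same underlying facts.
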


Since the eigenvalues of frobenius are distinct, in all cases of Proposition
\ref{semistable}, at most two of the entries of the matrix of $[N]_{\underline
{e}}$ are nonzero.

\subsection{Proof of the propositions}

The map $\varphi^{f}$ is $E^{\mid\mathcal{\tau}\mid}$-linear, $D_{i}\ $is
$3$-dimensional over $E$ and $D_{i}\ $is $\varphi^{f}$-stable for all $i.$ By
Jordan decomposition, for each $i=0,1,...,f-1,$ there exists an ordered basis
$\underline{e}(i)=\left(  e_{0}(i),e_{1}(i),e_{2}(i)\right)  $ of $D_{i}$ over
$E$ such that $\mathrm{Mat}_{\underline{e}^{i}}\left(  \varphi^{f}\right)  $
has one of the following forms:%
\[
\mathrm{Mat}_{\underline{e}^{i}}\left(  \varphi^{f}\right)  =\mathrm{diag}%
\left(  a_{11}(i),a_{22}(i),a_{33}(i)\right)
\]
for some $a_{jj}^{i}\in E^{\times},$ or%
\[
\mathrm{Mat}_{\underline{e}^{i}}\left(  \varphi^{f}\right)  =\left(
\begin{array}
[c]{ccc}%
a_{11}(i) & 0 & 0\\
1 & a_{11}(i) & 0\\
0 & 0 & a_{33}(i)
\end{array}
\right)
\]
for some distinct $a_{11}(i),a_{33}(i)\in E^{\times},$ or%
\[
\mathrm{Mat}_{\underline{e}^{i}}\left(  \varphi^{f}\right)  =\left(
\begin{array}
[c]{ccc}%
a_{11}(i) & 0 & 0\\
1 & a_{11}(i) & 0\\
0 & 1 & a_{11}(i)
\end{array}
\right)
\]
for some $a_{11}(i)\in E^{\times}.$ The $E$-linear map $\varphi:D_{i}%
\rightarrow D_{i+1}$ is an isomorphism and $\{a_{11}(i),a_{22}(i),a_{33}%
(i)\}=\{a_{11}(i+1),a_{22}(i+1),a_{33}(i+1)\}$ for all $i.$ Permuting the
basis elements of the $D_{i}$ we may assume that $a_{11}:=a_{11}(i),$
$a_{22}:=a_{22}(i)$ and $a_{22}:=a_{33}(i)$ for all $i.$ Moreover, if
$\mathrm{Mat}_{\underline{\eta}^{i}}\left(  \varphi^{f}\right)  $ has one of
the types above for some $i$ then $\mathrm{Mat}_{\underline{\eta}^{i}}\left(
\varphi^{f}\right)  $ has the same type for all $i.$ For the ordered basis
$\underline{e}=\left(  e_{0},e_{1},e_{2}\right)  ,$ where $e_{j}:=%
{\textstyle\sum\limits_{i=0}^{f-1}}
e_{j}(i),$ the shape of $\mathrm{Mat}_{\underline{e}}\left(  \varphi
^{f}\right)  $ is of the forms: either%
\begin{equation}
\mathrm{Mat}_{\underline{e}}\left(  \varphi^{f}\right)  =\mathrm{diag}\left(
a_{11}\cdot\vec{1},a_{22}\cdot\vec{1},a_{33}\cdot\vec{1}\right)  \tag{$%
1$}\label{phi 1}%
\end{equation}
or%
\begin{equation}
\mathrm{Mat}_{\underline{e}}\left(  \varphi^{f}\right)  =\left(
\begin{array}
[c]{ccc}%
a_{11}\cdot\vec{1} & \vec{0} & \vec{0}\\
\vec{1} & a_{11}\cdot\vec{1} & \vec{0}\\
\vec{0} & \vec{0} & a_{33}\cdot\vec{1}%
\end{array}
\right)  , \tag{$2$}%
\end{equation}
or%
\begin{equation}
\mathrm{Mat}_{\underline{e}}\left(  \varphi^{f}\right)  =\left(
\begin{array}
[c]{ccc}%
a_{11}\cdot\vec{1} & \vec{0} & \vec{0}\\
\vec{1} & a_{11}\cdot\vec{1} & \vec{0}\\
\vec{0} & \vec{1} & a_{11}\cdot\vec{1}%
\end{array}
\right)  . \tag{$3$}%
\end{equation}

\begin{proposition}
\label{dist eigen}A\ rank $3$ filtered $\varphi$-module $\left(
D,\varphi\right)  $ over $K$ with $E$-coefficients has distinct eigenvalues of
frobenius if and only if there exists an ordered basis$\ \underline{e}$ such
that%
\[
\mathrm{Mat}_{\underline{e}}\left(  \varphi^{f}\right)  =\mathrm{diag}\left(
a_{11}\cdot\vec{1},a_{22}\cdot\vec{1},a_{33}\cdot\vec{1}\right)
\]
for some distinct $a_{jj}\in E^{\times}.$
\end{proposition}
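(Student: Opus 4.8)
The plan is to deduce the proposition from the elementary interplay between $\varphi$ and $\varphi^{f}$: the map $\varphi$ restricts to $E$-linear isomorphisms between the pieces $D_i$, while $\varphi^{f}$ is $E$-linear on each $D_i$ separately, and an operator with distinct eigenvalues has a canonical eigenspace decomposition that is preserved by anything commuting with it. For the forward direction, suppose $(D,\varphi)$ has distinct eigenvalues of frobenius, so there is an ordered basis $\underline{e}=(e_{0},e_{1},e_{2})$ of $D$ over $E^{\mid\mathcal{\tau}\mid}$ with $\mathrm{Mat}_{\underline{e}}(\varphi)=\mathrm{diag}(\vec{a},\vec{b},\vec{c})$ and with $\mathrm{Nm}_{\varphi}(\vec{a}),\mathrm{Nm}_{\varphi}(\vec{b}),\mathrm{Nm}_{\varphi}(\vec{c})$ distinct. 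Iterating the semilinearity of $\varphi$ gives $\varphi^{f}(e_{0})=\varphi^{f-1}(\vec{a})\cdots\varphi(\vec{a})\,\vec{a}\cdot e_{0}=\mathrm{Nm}_{\varphi}(\vec{a})\cdot e_{0}$, and similarly for $e_{1},e_{2}$. Since $\mathrm{Nm}_{\varphi}(\vec{x})=\big(\prod_{i}x(i)\big)\cdot\vec{1}$ for every $\vec{x}$, this yields $\mathrm{Mat}_{\underline{e}}(\varphi^{f})=\mathrm{diag}(a_{11}\cdot\vec{1},a_{22}\cdot\vec{1},a_{33}\cdot\vec{1})$ with the $a_{jj}$ distinct; this half is a one-line computation requiring nothing further.

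For the reverse direction, suppose $\mathrm{Mat}_{\underline{e}}(\varphi^{f})=\mathrm{diag}(a_{11}\cdot\vec{1},a_{22}\cdot\vec{1},a_{33}\cdot\vec{1})$ with the $a_{jj}$ distinct. Then for each $i$ the $E$-linear endomorphism $\varphi^{f}|_{D_{i}}$ of the three-dimensional $E$-space $D_{i}$ has the three distinct eigenvalues $a_{11},a_{22},a_{33}$, hence $D_{i}=L_{i}^{(1)}\oplus L_{i}^{(2)}\oplus L_{i}^{(3)}$, the canonical decomposition into $\varphi^{f}$-eigenlines. I would then record two facts: first, $\varphi$ carries $D_{i}$ isomorphically onto the appropriate neighbour $D_{i'}$ and is $E$-linear there, because under $\nu$ the map $\varphi$ acts on $E^{\mid\mathcal{\tau}\mid}$ by the coordinate shift, which fixes the diagonal copy $E\cdot\vec{1}$ while permuting the idempotents $e_{\tau_{i}}$; second, $\varphi\circ\varphi^{f}=\varphi^{f}\circ\varphi$. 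Combining these, $v\in L_{i}^{(k)}$ implies $\varphi^{f}(\varphi v)=\varphi(\varphi^{f}v)=a_{kk}\,\varphi(v)$, so $\varphi(L_{i}^{(k)})=L_{i'}^{(k)}$. Therefore $D^{(k)}:=\bigoplus_{i}L_{i}^{(k)}$ is a $\varphi$-stable $E^{\mid\mathcal{\tau}\mid}$-submodule, free of rank one, and $D=D^{(1)}\oplus D^{(2)}\oplus D^{(3)}$. Choosing an $E^{\mid\mathcal{\tau}\mid}$-generator of each $D^{(k)}$ produces a basis $\underline{e}'$ in which $\varphi$ is diagonal, $\mathrm{Mat}_{\underline{e}'}(\varphi)=\mathrm{diag}(\vec{a},\vec{b},\vec{c})$ with $\vec{a},\vec{b},\vec{c}\in(E^{\times})^{f}$; and since the $k$-th generator lies in the $a_{kk}$-eigenspace of $\varphi^{f}$ on every $D_{i}$, the computation of the first part gives $\mathrm{Nm}_{\varphi}(\vec{a})=a_{11}\cdot\vec{1}$, and likewise, so the three norms are distinct and $(D,\varphi)$ has distinct eigenvalues of frobenius.

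The one point that needs genuine care is the $E$-linearity of $\varphi$ between the slots $D_{i}$ together with the bookkeeping of the direction in which $\varphi$ cyclically shifts them; beyond that, the argument is just the standard remark that an operator with distinct eigenvalues has a canonical eigenline decomposition stable under anything it commutes with. I would also note that the Jordan-form trichotomy recorded before the proposition is not needed here: it is only relevant for identifying which of the shapes $(1)$, $(2)$, $(3)$ occurs, and distinctness of the $\mathrm{Nm}_{\varphi}$ forces shape $(1)$ automatically, since an operator diagonalizable with distinct eigenvalues cannot be similar to a nontrivial Jordan form.
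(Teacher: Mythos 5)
Your proof is correct and follows essentially the same route as the paper: the paper verifies $Q_iP_i=P_iQ_i$ coordinatewise and concludes that each $P_i=\mathrm{Mat}(\varphi)_i$ is diagonal because the $a_{jj}$ are distinct, which is exactly your observation that $\varphi$ must permute the $\varphi^{f}$-eigenlines $L_i^{(k)}$ compatibly (your basis $\underline{e}'$ can in fact be taken to be $\underline{e}$ itself, since $L_i^{(k)}=Ee_{\tau_i}e_{k}$ already). Your closing remark that distinctness of the norms rules out the non-semisimple Jordan shapes, so the trichotomy preceding the proposition is not needed here, is also consistent with the paper.
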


\begin{proof}
Let $P:=\mathrm{Mat}_{\underline{e}}\left(  \varphi\right)  =\left(
P_{0},P_{1},...,P_{f-1}\right)  \ $and $Q:=\mathrm{Mat}_{\underline{e}}\left(
\varphi^{f}\right)  =\left(  Q_{0},Q_{1},...,Q_{f-1}\right)  .$ Since
$Q=\mathrm{Nm}_{\varphi}\left(  P\right)  \ $and $\varphi^{f}\left(  P\right)
=P,$ we have $Q=P\varphi\left(  Q\right)  P^{-1}$ and $Q_{i}=P_{i}Q_{i+1}%
P_{i}^{-1}$ for all $i=1,2,...,f.$ Since $Q_{i}=Q_{i+1}=\mathrm{diag}\left(
a_{11},a_{22},a_{33}\right)  $ for all $i,$ we have $Q_{i}P_{i}=P_{i}Q_{i}%
,\ $and since the $a_{jj}$ are distinct, a direct computation shows that
$P_{i}\ $is diagonal for all $i.$ The proposition follows since $\mathrm{Nm}%
_{\varphi}\left(  \vec{a}\right)  =a_{11}\cdot\vec{1},$ $\mathrm{Nm}_{\varphi
}(\vec{b})=a_{22}\cdot\vec{1},$ $\mathrm{Nm}_{\varphi}\left(  \vec{c}\right)
=a_{33}\cdot\vec{1},$ and the $a_{jj}$ are distinct. The other direction is trivial.
\end{proof}

\begin{proof}
[Proof of Proposition \ref{phi, fil}]Let $\underline{e}=\left(  e_{0}%
,e_{1},e_{2}\right)  $ be an ordered basis of $D$ such that $\mathrm{Mat}%
_{\underline{e}}\left(  \varphi^{f}\right)  $ is diagonal, and fix some
$i\in\{0,1,...,f-1\}.$ Assume that the labeled Hodge-Tate weights of $D$ with
respect to the embedding $\tau_{i}$ are distinct. Then the filtration
$\mathrm{Fil}^{\mathit{j}}\left(  D_{i}\right)  :=e_{\tau_{i}}\mathrm{Fil}%
^{\mathit{j}}\left(  D\right)  $ of $D_{i}:=e_{\tau_{i}}D$ has the form
\[
\mathrm{Fil}^{j}D_{i}=\left\{
\begin{array}
[c]{l}%
e_{\tau_{i}}D\ \ \mathrm{if}\text{ }j\leq0,\\
e_{\tau_{i}}D_{2}\ \mathrm{if}\text{ }1\leq j\leq k_{1}(i),\\
e_{\tau_{i}}D_{1}\ \mathrm{if}\text{ }1+k_{1}\leq j\leq k_{2}(i)\\
\ 0\ \ \ \ \ \ \mathrm{if\ }j\geq1+k_{2}(i),
\end{array}
\right.
\]
where%
\[
e_{\tau_{i}}D_{2}=e_{\tau_{i}}E^{\mid\mathcal{\tau}\mid}\left(  \vec{u}%
_{0}^{i}e_{0}+\vec{u}_{1}^{i}e_{1}+\vec{u}_{2}^{i}e_{2}\right)  \oplus
e_{\tau_{i}}E^{\mid\mathcal{\tau}\mid}\left(  \vec{v}_{0}^{i}e_{0}+\vec{v}%
_{1}^{i}e_{1}+\vec{v}_{2}^{i}e_{2}\right)  ,
\]
with $\vec{u}_{j}^{i},\vec{v}_{j}^{i}\in E^{\mid\mathcal{\tau}\mid}.$ The
vectors $e_{\tau_{i}}\left(  \vec{u}_{0}^{i}e_{0}+\vec{u}_{1}^{i}e_{1}+\vec
{u}_{2}^{i}e_{2}\right)  \ $and $e_{\tau_{i}}\left(  \vec{v}_{0}^{i}e_{0}%
+\vec{v}_{1}^{i}e_{1}+\vec{v}_{2}^{i}e_{2}\right)  $ are linearly independent
over $E$ and for simplicity we write $\vec{u}_{j}:=\vec{u}_{j}^{i}\ $and
$\vec{v}_{j}=\vec{v}_{j}^{i}.$ The space $e_{\tau_{i}}D_{1}$ is some
$1$-dimensional subspace\ of $e_{\tau_{i}}D_{2}.$ Applying an automorphism of
$\left(  D,\varphi\right)  \ $which permutes the basis elements, if necessary,
we may assume that the $i$-th coordinate $v_{0}(i)\ $of $\vec{v}_{0}\ $is
nonzero. Then
\[
e_{\tau_{i}}D_{2}=e_{\tau_{i}}E^{\mid\mathcal{\tau}\mid}\left(  u(i)-u_{0}%
(i)v_{0}(i)^{-1}v(i)\right)  \oplus e_{\tau_{i}}E^{\mid\mathcal{\tau}\mid
}v_{0}(i)^{-1}v(i).
\]
Since $e_{\tau_{i}}D_{2}$ is $2$-dimensional over $E,$ we have $u_{1}%
(i)-u_{0}(i)\frac{v_{1}(i)}{v_{0}(i)}\neq0$ or $u_{2}(i)-u_{0}(i)\frac
{v_{2}(i)}{v_{0}(i)}\neq0.$ Applying the automorphism of $\left(
D,\varphi\right)  $ which permutes the basis elements $e_{1}$ and $e_{2}\ $and
fixes $e_{0},\ $if necessary, we may assume that $r(i):=u_{1}(i)-u_{0}%
(i)\frac{v_{1}(i)}{v_{0}(i)}\neq0.$ Let%
\[
w(i):=e_{1}+\allowbreak\frac{u_{0}(i)v_{2}(i)-v_{0}(i)u_{2}(i)}{u_{0}%
(i)v_{1}(i)-u_{1}(i)v_{0}(i)}e_{2},
\]
then%
\begin{align*}
e_{\tau_{i}}D_{2}  &  =e_{\tau_{i}}E^{\mid\mathcal{\tau}\mid}r(i)^{-1}\left(
u(i)-u_{0}(i)v_{0}(i)^{-1}v(i)\right)  \oplus e_{\tau_{i}}E^{\mid
\mathcal{\tau}\mid}v_{0}(i)^{-1}v(i)\\
&  =e_{\tau_{i}}E^{\mid\mathcal{\tau}\mid}w(i)\oplus e_{\tau_{i}}%
E^{\mid\mathcal{\tau}\mid}v_{0}(i)^{-1}v(i)=e_{\tau_{i}}E^{\mid\mathcal{\tau
}\mid}w(i)\oplus e_{\tau_{i}}E^{\mid\mathcal{\tau}\mid}\left(  v_{0}%
(i)^{-1}v(i)-v_{1}(i)v_{0}(i)^{-1}w(i)\right) \\
&  =e_{\tau_{i}}E^{\mid\mathcal{\tau}\mid}\left(  e_{1}+\allowbreak\frac
{u_{0}(i)v_{2}(i)-v_{0}(i)u_{2}(i)}{u_{0}(i)v_{1}(i)-u_{1}(i)v_{0}(i)}%
e_{2}\right)  \oplus e_{\tau_{i}}E^{\mid\mathcal{\tau}\mid}\left(  e_{0}%
+\frac{u_{2}(i)v_{1}(i)-u_{1}(i)v_{2}(i)}{u_{0}(i)v_{1}(i)-u_{1}(i)v_{0}%
(i)}e_{2}\right)  .
\end{align*}
Let%
\[
y_{2}(i)=\frac{u_{2}(i)v_{1}(i)-u_{1}(i)v_{2}(i)}{u_{0}(i)v_{1}(i)-u_{1}%
(i)v_{0}(i)}\ \text{and\ }y_{2}^{\prime}(i)=\frac{u_{0}(i)v_{2}(i)-v_{0}%
(i)u_{2}(i)}{u_{0}(i)v_{1}(i)-u_{1}(i)v_{0}(i)}.
\]
We have $e_{\tau_{i}}D_{2}=e_{\tau_{i}}E^{\mid\mathcal{\tau}\mid}\left(
e_{0}+y_{2}(i)e_{2}\right)  \oplus e_{\tau_{i}}E^{\mid\mathcal{\tau}\mid
}\left(  e_{1}+y_{2}^{\prime}(i)e_{2}\right)  ,$ and let%
\[
e_{\tau_{i}}D_{1}=e_{\tau_{i}}E^{\mid\mathcal{\tau}\mid}\left(  \lambda
(i)e_{0}+\mu(i)e_{1}+\left(  \lambda(i)y_{2}(i)+\mu(i)y_{2}^{\prime
}(i)\right)  e_{2}\right)
\]
for some $\lambda(i),\mu(i)\in E$ with$\ \left(  \lambda(i),\mu(i)\right)
\neq\left(  0,0\right)  .$ If $\lambda(i)\neq0,$ we let $x_{1}(i)=\frac
{\mu(i)}{\lambda(i)},\ x_{2}(i)=y_{2}(i)\ $and $x_{2}^{\prime}(i)=y_{2}%
^{\prime}(i),$ otherwise we apply the automorphism of $\left(  D,\varphi
\right)  $ which permutes $e_{0}$ with $e_{1}$ and fixes $e_{2}.$ Then%
\[
e_{\tau_{i}}D_{2}=e_{\tau_{i}}E^{\mid\mathcal{\tau}\mid}\left(  e_{0}%
+x_{2}(i)e_{2}\right)  \oplus e_{\tau_{i}}E^{\mid\mathcal{\tau}\mid}\left(
e_{1}+x_{2}^{\prime}(i)e_{2}\right)  ,
\]
where $x_{2}(i)=y_{2}^{\prime}(i)$ and $x_{2}^{\prime}=y_{2},$%
\[
e_{\tau_{i}}D_{1}=e_{\tau_{i}}E^{\mid\mathcal{\tau}\mid}\left(  e_{0}%
+\frac{\lambda(i)}{\mu(i)}e_{1}+\left(  \frac{\lambda(i)}{\mu(i)}x_{2}%
^{\prime}(i)+x_{2}(i)\right)  e_{2}\right)
\]
and we let $x_{1}(i)=\frac{\lambda(i)}{\mu(i)}.$ If $x_{2}^{\prime}(i)\neq0,$
applying the automorphism which maps $e_{2}$ to $\left(  x_{2}^{\prime
}(i)\right)  ^{-1}\cdot\vec{1}\cdot e_{2}$ and fixes $e_{0}$ and $e_{1}$ we
may assume that $x_{2}^{\prime}(i)=1.$ Similarly, if $x_{2}(i)\neq0,$ applying
the automorphism of $\left(  D,\varphi\right)  $ which maps $e_{0}$ to
$x_{2}(i)\cdot\vec{1}\cdot e_{0}$ and fixes$\ e_{1}$ and $e_{2},$ we may
assume that $x_{2}(i)=1.$ The matrix of $\varphi$ remains diagonal after
applying each of the automorphisms of $\left(  D,\varphi\right)  \ $above, and
its eigenvalues remain distinct. This concludes the proof of the first part of
the proposition in the case of thee distinct labeled Hodge-Tate weights with
respect to the embedding $\tau_{i}.$ The proofs for the cases of two or three
equal Hodge-Tate weights are special cases of the proof above.
\end{proof}

\noindent

\begin{proof}
[Proof of Proposition \ref{wa}]If $D^{\ast}=E^{\mid\mathcal{\tau}\mid}\left(
\vec{z}_{0}e_{0}+\vec{z}_{1}e_{1}+\vec{z}_{2}e_{2}\right)  $ is a rank one
$\varphi$-stable submodule, prove that if $\vec{z}_{r}\neq\vec{0}$ for some
$r,$ then $\vec{z}_{j}=0$ for all $j\neq r.$ Indeed, let $\vec{z}_{0}\neq
\vec{0}$ and let $z_{0}(i)\neq0$ for some $i.$ We may assume that
$z_{0}\left(  i\right)  =1.$ Since $D^{\ast}$ is $\varphi$-stable,
$z_{0}\left(  i\right)  \neq0$ for all $i.$ We apply the linear map
$\varphi^{f}$ to $e_{\tau_{i}}D^{\ast}$ and we have%
\[
e_{\tau_{i}}\left(  \alpha e_{0}+\beta z_{1}\left(  i\right)  e_{1}+\gamma
z_{2}(i)e_{2}\right)  =te_{\tau_{i}}\left(  e_{0}+z_{1}\left(  i\right)
e_{1}+z_{2}\left(  i\right)  e_{2}\right)  .
\]
Since the $\alpha,\beta,\gamma$ are distinct this implies that $z_{1}\left(
i\right)  =z_{2}\left(  i\right)  =0\ $for all $i.$ Hence the only rank one
$\varphi$-stable submodules of $D$ are the $D_{s}=E^{\mid\mathcal{\tau}\mid
}e_{s},$ $s=0,1,2.$ Let $D^{\ast}=E^{\mid\mathcal{\tau}\mid}\left(  \vec
{z}_{0}e_{0}+\vec{z}_{1}e_{1}+\vec{z}_{2}e_{2}\right)  \oplus E^{\mid
\mathcal{\tau}\mid}\left(  \vec{w}_{0}e_{0}+\vec{w}_{1}e_{1}+\vec{w}_{2}%
e_{2}\right)  $ be a rank two $\varphi$-stable submodule. Assume that
$z_{0}\left(  i\right)  \neq0.$ If $w_{0}\left(  i\right)  \neq0,$ there is no
loss to assume that $z_{0}(i)=w_{0}\left(  i\right)  =1.$ Since $D^{\ast}$ is
$\varphi$-stable, applying $\varphi^{f}$ we see that there exist $\lambda
_{j}(i),\mu_{j}(i)\in E$ such that
\begin{align}
\alpha e_{0}+\beta z_{1}(i)e_{1}+\gamma z_{2}(i)e_{2}  &  =\lambda
_{1}(i)\left(  e_{0}+z_{1}(i)e_{1}+z_{2}(i)e_{2}\right)  +\mu_{1}(i)\left(
e_{0}+w_{1}(i)e_{1}+w_{2}(i)e_{2}\right)  ,\label{2}\\
\alpha e_{0}+\beta w_{1}(i)e_{1}+\gamma w_{2}(i)e_{2}  &  =\lambda
_{2}(i)\left(  e_{0}+z_{1}(i)e_{1}+z_{2}(i)e_{2}\right)  +\mu_{2}(i)\left(
e_{0}+w_{1}(i)e_{1}+w_{2}(i)e_{2}\right)  . \label{3}%
\end{align}
These imply the following:
\begin{align}
&  \left(  \lambda_{1}(i)-\beta\right)  z_{1}(i)+\mu_{1}(i)w_{1}%
(i)=0,\label{4}\\
&  \left(  \alpha-\mu_{2}(i)\right)  z_{1}(i)+\left(  \mu_{2}(i)-\beta\right)
w_{1}(i)=0,\label{5}\\
&  \left(  \lambda_{1}(i)-\gamma\right)  z_{2}(i)+\mu_{1}(i)w_{2}%
(i)=0,\label{6}\\
&  \left(  \alpha-\mu_{2}(i)\right)  z_{2}(i)+\left(  \mu_{2}(i)-\gamma
\right)  w_{2}(i)=0,\label{7}\\
&  \alpha=\lambda_{1}(i)+\mu_{1}(i)=\lambda_{2}(i)+\mu_{2}(i).\ \label{8}%
\end{align}
We have $\left(  z_{1}(i),w_{1}(i)\right)  \neq\left(  0,0\right)  $ or
$\left(  z_{2}(i),w_{2}(i)\right)  \neq\left(  0,0\right)  ,$ and in the first
case equations $\left(  \ref{4}\right)  ,$ $\left(  \ref{5}\right)  $
and\ $\left(  \ref{8}\right)  \ $imply that
\[
\det\left(
\begin{array}
[c]{cc}%
\lambda_{1}(i)-\beta & \mu_{1}(i)\\
\alpha-\mu_{2}(i) & \mu_{2}(i)-\beta
\end{array}
\right)  =\det\left(
\begin{array}
[c]{cc}%
\lambda_{1}(i)-\beta+\mu_{1}(i) & \mu_{1}(i)\\
\alpha-\beta & \mu_{2}(i)-\beta
\end{array}
\right)  =\left(  \alpha-\beta\right)  \left(  \mu_{2}(i)-\mu_{1}%
(i)-\beta\right)  =0.
\]
Hence $\mu_{1}(i)-\mu_{2}(i)=b=\lambda_{2}(i)-\lambda_{1}(i).$ Subtracting
equations $\left(  \ref{2}\right)  $ and $\left(  \ref{3}\right)  $ we get$\ $%
\[
\beta\left(  z_{1}(i)-w_{1}(i)\right)  e_{1}+\gamma\left(  z_{2}%
(i)-w_{2}(i)\right)  e_{2}=\beta\left(  z_{1}(i)-w_{1}(i)\right)  e_{1}%
+\beta\left(  z_{2}(i)-w_{2}(i)\right)  e_{2}.
\]
Since $\beta\neq\gamma,$ the latter implies that $z_{2}(i)=w_{2}(i).$ Then
$z_{1}(i)\neq w_{1}(i)$ and
\begin{align*}
&  e_{\tau_{i}}D^{\ast}=e_{\tau_{i}}E^{\mid\mathcal{\tau}\mid}\left(
e_{0}+z_{1}(i)e_{1}+z_{2}(i)e_{2}\right)  \oplus e_{\tau_{i}}E^{\mid
\mathcal{\tau}\mid}\left(  e_{0}+w_{1}(i)e_{1}+z_{2}(i)e_{2}\right)  =\\
&  e_{\tau_{i}}E^{\mid\mathcal{\tau}\mid}\left(  e_{0}+z_{1}(i)e_{1}%
+z_{2}(i)e_{2}\right)  \oplus e_{\tau_{i}}E^{\mid\mathcal{\tau}\mid}%
e_{1}=e_{\tau_{i}}E^{\mid\mathcal{\tau}\mid}\left(  e_{0}+z_{2}(i)e_{2}%
\right)  \oplus e_{\tau_{i}}E^{\mid\mathcal{\tau}\mid}e_{1}.
\end{align*}
Since the eigenvalues of $\varphi^{f}$ are distinct, the latter is easily seen
to be $\varphi$-stable if and only if $z_{2}(i)=w_{2}(i)=0.$ Arguing similarly
for the remaining cases we see that the only rank two $\varphi$-stable
submodules of $D$ are the $D_{01}:=E^{\mid\mathcal{\tau}\mid}e_{0}\oplus
E^{\mid\mathcal{\tau}\mid}e_{1},D_{02}:=E^{\mid\mathcal{\tau}\mid}e_{0}\oplus
E^{\mid\mathcal{\tau}\mid}e_{2},D_{12}:=E^{\mid\mathcal{\tau}\mid}e_{1}\oplus
E^{\mid\mathcal{\tau}\mid}e_{2}.$ For any $E^{\mid\mathcal{\tau}\mid}$
subspace $D^{\ast}$ of $D$ we have%
\[
t_{H}^{E}\left(  D^{\ast}\right)  =%
{\textstyle\sum\limits_{j\in\mathbb{Z}}}
\dim_{E}\left(  \mathrm{Fil}^{j}\left(  D^{\ast}\right)  /\mathrm{Fil}%
^{j+1}\left(  D^{\ast}\right)  \right)  =%
{\textstyle\sum\limits_{i=0}^{f-1}}
{\textstyle\sum\limits_{j\in\mathbb{Z}}}
\dim_{E}\left(  \mathrm{Fil}^{j}\left(  e_{\tau_{i}}D^{\ast}\right)
/\mathrm{Fil}^{j+1}\left(  e_{\tau_{i}}D^{\ast}\right)  \right)  .
\]

$\left(  1\right)  $ Assume that $\mathrm{HT}_{\tau_{i}}\left(  e_{\tau_{i}%
}D\right)  =\{0=k_{0}(i)<k_{1}(i)<k_{2}(i)\}.$ Then the filtration of
$e_{\tau_{i}}D$ is given by formula $\left(  \text{\ref{1}}\right)  $. Since
$\mathrm{Fil}^{\mathit{j}}\left(  e_{\tau_{i}}D_{0}\right)  =e_{\tau_{i}}%
D_{0}\cap\mathrm{Fil}^{\mathit{j}}\left(  e_{\tau_{i}}D\right)  ,$ we have%
\[
\ \ \ \ \ \ \ \ \ \ \ \ \mathrm{Fil}^{\mathit{j}}\left(  e_{\tau_{i}}%
D_{0}\right)  =\left\{
\begin{array}
[c]{l}%
e_{\tau_{i}}E^{\mid\mathcal{\tau}\mid}e_{0}\ \mathrm{if}\text{ }j\leq0,\\
\\
\left\{
\begin{array}
[c]{l}%
e_{\tau_{i}}E^{\mid\mathcal{\tau}\mid}e_{0}\ \text{if\ }x_{2}(i)=0,\\
0\ \text{if\ }x_{2}(i)=1,
\end{array}
\right\}  \ \ \mathrm{if}\text{ }1\leq j\leq k_{1}(i),\\
\\
\left\{
\begin{array}
[c]{l}%
e_{\tau_{i}}E^{\mid\mathcal{\tau}\mid}e_{0}\ \text{if }x_{1}(i)=x_{2}%
^{\prime\prime}(i)=0\\
0\ \text{if }x_{1}(i)\neq0\ \text{or }x_{2}^{\prime\prime}(i)\neq0,
\end{array}
\right\}  \ \mathrm{if}\text{ }1+k_{1}\leq j\leq k_{2},\\
\\
0\ \ \mathrm{if}\text{ }j\geq1+k_{2},
\end{array}
\right.
\]
hence%
\[
\ \ \ \ \ \ \ \ \ \ \ \ \ \ \ \ \ \ \ \ \ \ \ t_{H}^{E}\left(  e_{\tau_{i}%
}D_{0}\right)  =\left\{
\begin{array}
[c]{l}%
0\ \text{if\ }x_{2}(i)\neq0,\ \\
k_{1}(i)\ \text{if }x_{2}(i)=0\ \text{and }x_{1}(i)\not =0,\\
k_{2}(i)\ \text{if }x_{1}(i)=x_{2}(i)=0.
\end{array}
\right.
\]
Similarly,%
\[
\mathrm{Fil}^{\mathit{j}}\left(  e_{\tau_{i}}D_{1}\right)  =\left\{
\begin{array}
[c]{l}%
e_{\tau_{i}}E^{\mid\mathcal{\tau}\mid}e_{1}\ \mathrm{if}\text{ }j\leq0,\\
\\
\left\{
\begin{array}
[c]{l}%
e_{\tau_{i}}E^{\mid\mathcal{\tau}\mid}e_{1}\ \text{if\ }x_{2}^{\prime}(i)=0,\\
0\ \text{if\ }x_{2}^{\prime}(i)=1,
\end{array}
\right\}  \ \ \mathrm{if}\text{ }1\leq j\leq k_{1}(i),\\
\\
0\ \mathrm{if}\text{ }1+k_{1}\leq j,
\end{array}
\right.
\]
hence%
\[
\ \ \ \ \ t_{H}^{E}\left(  e_{\tau_{i}}D_{1}\right)  =\left\{
\begin{array}
[c]{l}%
0\ \text{if\ }x_{2}^{\prime}(i)=1,\ \\
k_{1}(i)\ \text{if }x_{2}^{\prime}(i)=0,
\end{array}
\right.
\]
and%
\[
\ \ \ \ \mathrm{Fil}^{\mathit{j}}\left(  e_{\tau_{i}}D_{2}\right)  =\left\{
\begin{array}
[c]{l}%
e_{\tau_{i}}E^{\mid\mathcal{\tau}\mid}e_{2}\ \mathrm{if}\text{ }j\leq0,\\
0\ \ \mathrm{if}\text{ }1\leq j,
\end{array}
\right.
\]
hence%
\[
t_{H}^{E}\left(  D_{2}\right)  =0,
\]%
\[
\ \ \ \ \ \ \ \ \ \ \ \ \ \ \ \ \ \ \ \ \ \ \ \ \ \ \ \ \ \ \ \ \mathrm{Fil}%
^{\mathit{j}}\left(  e_{\tau_{i}}D_{01}\right)  =\left\{
\begin{array}
[c]{l}%
e_{\tau_{i}}E^{\mid\mathcal{\tau}\mid}e_{0}\oplus e_{\tau_{i}}E^{\mid
\mathcal{\tau}\mid}e_{1}\ \mathrm{if}\text{ }j\leq0,\\
\\
\left\{
\begin{array}
[c]{l}%
e_{\tau_{i}}E^{\mid\mathcal{\tau}\mid}\left(  e_{0}-x_{2}(i)e_{1}\right)
\ \text{if\ }x_{2}^{\prime}(i)=1,\\
e_{\tau_{i}}E^{\mid\mathcal{\tau}\mid}e_{0}\oplus e_{\tau_{i}}E^{\mid
\mathcal{\tau}\mid}e_{1}\ \text{if\ }x_{2}(i)=x_{2}^{\prime}(i)=0,\\
e_{\tau_{i}}E^{\mid\mathcal{\tau}\mid}e_{1}\ \text{if\ }x_{2}^{\prime
}(i)=0\ \text{and }x_{2}(i)\neq0,
\end{array}
\right\}  \ \ \mathrm{if}\text{ }1\leq j\leq k_{1}(i),\\
\\
\left\{
\begin{array}
[c]{l}%
e_{\tau_{i}}E^{\mid\mathcal{\tau}\mid}\left(  e_{0}+x_{1}(i)e_{1}\right)
\ \text{if }x_{2}(i)+x_{1}x_{2}^{\prime}(i)=0,\\
0\ \text{if }x_{2}(i)+x_{1}(i)x_{2}^{\prime}(i)\not =0,
\end{array}
\right\}  \ \mathrm{if}\text{ }1+k_{1}\leq j\leq k_{2}(i),\\
\\
0\ \ \mathrm{if}\text{ }j\geq1+k_{2}(i),
\end{array}
\right.
\]
hence%
\[
\ \ \ t_{H}^{E}\left(  e_{\tau_{i}}D_{01}\right)  =\left\{
\begin{array}
[c]{l}%
k_{1}(i)\ \text{if }x_{2}(i)+x_{1}(i)x_{2}^{\prime}(i)\not =0,\\
k_{2}(i)\ \text{if }x_{2}(i)+x_{1}(i)x_{2}^{\prime}(i)=0\ \text{and\ }%
x_{2}^{\prime}(i)=1,\\
k_{1}(i)+k_{2}(i)\ \text{if }x_{2}(i)=x_{2}^{\prime}(i)=0,
\end{array}
\right.
\]

\[
\ \ \ \ \ \ \ \ \ \ \ \ \ \ \ \ \ \ \ \ \ \ \ \ \ \ \ \ \ \ \ \ \ \ \mathrm{Fil}%
^{\mathit{j}}\left(  e_{\tau_{i}}D_{02}\right)  =\left\{
\begin{array}
[c]{l}%
e_{\tau_{i}}E^{\mid\mathcal{\tau}\mid}e_{0}\oplus e_{\tau_{i}}E^{\mid
\mathcal{\tau}\mid}e_{2}\ \mathrm{if}\text{ }j\leq0,\\
e_{\tau_{i}}E^{\mid\mathcal{\tau}\mid}\left(  e_{0}+x_{2}(i)e_{2}\right)
\ \ \mathrm{if}\text{ }1\leq j\leq k_{1}(i),\\
\\
\left\{
\begin{array}
[c]{l}%
e_{\tau_{i}}E^{\mid\mathcal{\tau}\mid}\left(  e_{0}+x_{2}(i)e_{2}\right)
\ \text{if }x_{1}(i)=0,\\
0\ \text{if }x_{1}(i)\neq0,
\end{array}
\right\}  \ \mathrm{if}\text{ }1+k_{1}(i)\leq j\leq k_{2}(i),\\
\\
0\ \ \mathrm{if}\text{ }j\geq1+k_{2}(i),
\end{array}
\right.
\]
hence%
\[
t_{H}^{E}\left(  e_{\tau_{i}}D_{02}\right)  =\left\{
\begin{array}
[c]{l}%
k_{1}(i)\ \text{if }x_{1}(i)\not =0,\\
k_{2}(i)\ \text{if }x_{1}(i)=0.
\end{array}
\right.
\]
Finally,%
\[
\mathrm{Fil}^{\mathit{j}}\left(  e_{\tau_{i}}D_{12}\right)  =\left\{
\begin{array}
[c]{l}%
e_{\tau_{i}}E^{\mid\mathcal{\tau}\mid}e_{1}\oplus e_{\tau_{i}}E^{\mid
\mathcal{\tau}\mid}e_{2}\ \mathrm{if}\text{ }j\leq0,\\
e_{\tau_{i}}E^{\mid\mathcal{\tau}\mid}\left(  e_{1}+x_{2}^{\prime}%
(i)e_{2}\right)  \ \mathrm{if}\text{ }1\leq j\leq k_{1}(i),\\
0\ \ \mathrm{if}\text{ }j\geq1+k_{1}(i).
\end{array}
\right.
\]
Hence%
\[
t_{H}^{E}\left(  e_{\tau_{i}}D_{12}\right)  =k_{1}(i).
\]

$\left(  2\right)  $ Assume that the filtration of $e_{\tau_{i}}D$ is given by
formula (\ref{fil1}). We have%
\[
\mathrm{Fil}^{\mathit{j}}\left(  e_{\tau_{i}}D_{0}\right)  =\left\{
\begin{array}
[c]{l}%
e_{\tau_{i}}D_{0}\ \mathrm{if}\text{ }j\leq0,\\
\\
\left\{
\begin{array}
[c]{l}%
e_{\tau_{i}}D_{0}\ \text{if\ }x_{2}(i)=0,\\
0\ \text{if\ }x_{2}(i)=1,
\end{array}
\right\}  \ \ \mathrm{if}\text{ }1\leq j\leq k(i),\\
\\
0\ \ \mathrm{if}\text{ }j\geq1+k(i),
\end{array}
\right.
\]
hence%
\[
t_{H}^{E}\left(  e_{\tau_{i}}D_{0}\right)  =\left\{
\begin{array}
[c]{l}%
0\ \text{if }x_{2}(i)=1,\\
k(i)\ \text{if }x_{1}(i)=0.
\end{array}
\right.
\]%
\[
\mathrm{Fil}^{\mathit{j}}\left(  e_{\tau_{i}}D_{1}\right)  =\left\{
\begin{array}
[c]{l}%
e_{\tau_{i}}D_{1}\ \mathrm{if}\text{ }j\leq0,\\
\\
\left\{
\begin{array}
[c]{l}%
e_{\tau_{i}}D_{1}\ \text{if\ }x_{2}^{\prime}(i)=0,\\
0\ \text{if\ }x_{2}^{\prime}(i)=1,
\end{array}
\right\}  \ \ \mathrm{if}\text{ }1\leq j\leq k(i),\\
\\
0\ \ \mathrm{if}\text{ }j\geq1+k(i),
\end{array}
\right.
\]
hence%
\[
\ \ \ t_{H}^{E}\left(  e_{\tau_{i}}D_{1}\right)  =\left\{
\begin{array}
[c]{l}%
k(i)\ \text{if }x_{2}^{\prime}(i)=0,\\
0\ \text{if }x_{2}^{\prime}(i)=1.
\end{array}
\right.
\]%
\[
\mathrm{Fil}^{\mathit{j}}\left(  e_{\tau_{i}}D_{2}\right)  =\left\{
\begin{array}
[c]{l}%
e_{\tau_{i}}D_{1}\ \mathrm{if}\text{ }j\leq0,\\
0\ \ \mathrm{if}\text{ }j\geq1,
\end{array}
\right.
\]
hence%
\[
t_{H}^{E}\left(  e_{\tau_{i}}D_{2}\right)  =0.
\]%
\[
\ \ \ \ \ \ \ \ \ \ \ \mathrm{Fil}^{\mathit{j}}\left(  e_{\tau_{i}}%
D_{01}\right)  =\left\{
\begin{array}
[c]{l}%
e_{\tau_{i}}D_{01}\ \mathrm{if}\text{ }j\leq0,\\
\\
\left\{
\begin{array}
[c]{l}%
e_{\tau_{i}}E^{\mid\mathcal{\tau}\mid}\left(  e_{0}+x_{2}(i)e_{1}\right)
\ \text{if\ }x_{2}^{\prime}(i)=1,\\
e_{\tau_{i}}D_{01}\ \text{if\ }x_{2}(i)=x_{2}^{\prime}(i)=0,\\
e_{\tau_{i}}E^{\mid\mathcal{\tau}\mid}e_{1}\ \text{if\ }x_{2}^{\prime
}(i)=0\ \text{and }x_{2}(i)\neq0,
\end{array}
\right\}  \ \ \mathrm{if}\text{ }1\leq j\leq k(i),\\
\\
0\ \ \mathrm{if}\text{ }j\geq1+k(i),
\end{array}
\right.
\]
hence%
\[
\ \ \ \ \ \ \ \ \ t_{H}^{E}\left(  e_{\tau_{i}}D_{01}\right)  =\left\{
\begin{array}
[c]{l}%
k(i)\ \ \text{if }x_{2}^{\prime}(i)=1\ \text{or }x_{2}^{\prime}%
(i)=0\ \text{and }x_{2}(i)\neq0,\\
2k(i)\ \text{if }x_{2}(i)=x_{2}^{\prime}(i)=0.
\end{array}
\right.  \bigskip
\]%
\[
\mathrm{Fil}^{\mathit{j}}\left(  e_{\tau_{i}}D_{02}\right)  =\left\{
\begin{array}
[c]{l}%
e_{\tau_{i}}D_{02}\ \mathrm{if}\text{ }j\leq0,\\
e_{\tau_{i}}E^{\mid\mathcal{\tau}\mid}\left(  e_{0}+x_{2}(i)e_{2}\right)
\ \ \mathrm{if}\text{ }1\leq j\leq k(i),\\
0\ \ \mathrm{if}\text{ }j\geq1+k(i),
\end{array}
\right.
\]
hence%
\[
t_{H}^{E}\left(  e_{\tau_{i}}D_{02}\right)  =k(i).
\]%
\[
\mathrm{Fil}^{\mathit{j}}\left(  e_{\tau_{i}}D_{12}\right)  =\left\{
\begin{array}
[c]{l}%
e_{\tau_{i}}D_{12}\ \mathrm{if}\text{ }j\leq0,\\
e_{\tau_{i}}E^{\mid\mathcal{\tau}\mid}\left(  e_{1}+x_{2}^{\prime}%
(i)e_{2}\right)  \ \mathrm{if}\text{ }1\leq j\leq k(i),\\
0\ \ \mathrm{if}\text{ }j\geq1+k(i),
\end{array}
\right.
\]
hence%
\[
t_{H}^{E}\left(  e_{\tau_{i}}D_{12}\right)  =k(i).
\]
$\left(  3\right)  $ Assume that the filtration of $e_{\tau_{i}}D$ is given by
formula (\ref{filtr'}). We have%
\[
\mathrm{Fil}^{\mathit{j}}\left(  e_{\tau_{i}}D_{0}\right)  =\left\{
\begin{array}
[c]{l}%
e_{\tau_{i}}E^{\mid\mathcal{\tau}\mid}e_{0}\ \mathrm{if}\text{ }j\leq0,\\
\\
\left\{
\begin{array}
[c]{l}%
e_{\tau_{i}}E^{\mid\mathcal{\tau}\mid}e_{0}\ \text{if }x_{1}(i)=x_{2}%
^{\prime\prime}(i)=0\\
0\ \text{if }x_{1}(i)\neq0\ \text{or }x_{2}^{\prime\prime}(i)=1,
\end{array}
\right\}  \ \mathrm{if}\text{ }1\leq j\leq k(i),\\
\\
0\ \ \mathrm{if}\text{ }j\geq1+k(i),
\end{array}
\right.
\]
hence%
\[
\ \ \ \ \ \ \ \ \ \ \ \ \ \ t_{H}^{E}\left(  e_{\tau_{i}}D_{0}\right)
=\left\{
\begin{array}
[c]{l}%
0\ \text{if\ }x_{1}(i)=1\ \text{or }x_{2}^{\prime\prime}(i)=1,\\
k(i)\ \text{if }x_{1}(i)=x_{2}^{\prime\prime}(i)=0.
\end{array}
\right.
\]%
\[
\mathrm{Fil}^{\mathit{j}}\left(  e_{\tau_{i}}D_{1}\right)  =\left\{
\begin{array}
[c]{l}%
e_{\tau_{i}}E^{\mid\mathcal{\tau}\mid}e_{1}\ \mathrm{if}\text{ }j\leq0,\\
0\ \ \mathrm{if}\text{ }1\leq j,
\end{array}
\right.
\]
and
\[
\mathrm{Fil}^{\mathit{j}}\left(  e_{\tau_{i}}D_{2}\right)  =\left\{
\begin{array}
[c]{l}%
e_{\tau_{i}}E^{\mid\mathcal{\tau}\mid}e_{2}\ \mathrm{if}\text{ }j\leq0,\\
0\ \ \mathrm{if}\text{ }1\leq j,
\end{array}
\right.
\]
hence%
\[
t_{H}^{E}\left(  e_{\tau_{i}}D_{1}\right)  =t_{H}^{E}\left(  e_{\tau_{i}}%
D_{2}\right)  =0.
\]%
\[
\ \ \ \ \ \mathrm{Fil}^{\mathit{j}}\left(  e_{\tau_{i}}D_{01}\right)
=\left\{
\begin{array}
[c]{l}%
e_{\tau_{i}}D_{01}\ \mathrm{if}\text{ }j\leq0,\\
\\
\left\{
\begin{array}
[c]{l}%
e_{\tau_{i}}E^{\mid\mathcal{\tau}\mid}\left(  e_{0}+x_{1}(i)e_{1}\right)
\ \text{if }x_{2}^{\prime\prime}(i)=0,\\
0\ \text{if }x_{2}^{\prime\prime}(i)=1,
\end{array}
\right\}  \ \mathrm{if}\text{ }1\leq j\leq k(i),\\
\\
0\ \ \mathrm{if}\text{ }j\geq1+k(i),
\end{array}
\right.
\]
hence%
\[
\ t_{H}^{E}\left(  e_{\tau_{i}}D_{01}\right)  =\left\{
\begin{array}
[c]{l}%
0\ \text{if\ }x_{2}^{\prime\prime}(i)=1,\\
k(i)\ \text{if }x_{2}^{\prime\prime}(i)=0.
\end{array}
\right.
\]%
\[
\mathrm{Fil}^{\mathit{j}}\left(  e_{\tau_{i}}D_{02}\right)  =\left\{
\begin{array}
[c]{l}%
e_{\tau_{i}}D_{02}\ \mathrm{if}\text{ }j\leq0,\\
\\
\left\{
\begin{array}
[c]{l}%
e_{\tau_{i}}E^{\mid\mathcal{\tau}\mid}\left(  e_{0}+x_{2}(i)e_{2}\right)
\ \text{if }x_{1}(i)=0,\\
0\ \text{if }x_{1}(i)=1,
\end{array}
\right\}  \ \mathrm{if}\text{ }1\leq j\leq k(i),\\
\\
0\ \ \mathrm{if}\text{ }j\geq1+k(i),
\end{array}
\right.
\]
hence%
\[
\ \ \ t_{H}^{E}\left(  e_{\tau_{i}}D_{02}\right)  =\left\{
\begin{array}
[c]{l}%
0\ \text{if\ }x_{1}(i)=1,\\
k(i)\ \text{if }x_{1}(i)=0.
\end{array}
\right.
\]%
\[
\mathrm{Fil}^{\mathit{j}}\left(  e_{\tau_{i}}D_{12}\right)  =\left\{
\begin{array}
[c]{l}%
e_{\tau_{i}}D_{12}\ \mathrm{if}\text{ }j\leq0,\\
0\ \ \mathrm{if}\text{ }1\leq j,
\end{array}
\right.
\]%
\[
t_{H}^{E}\left(  e_{\tau_{i}}D_{12}\right)  =0.
\]
$\left(  4\right)  $ If the filtration of $e_{\tau_{i}}D$ is given by formula
(\ref{triv fil}), then all the Hodge invariants are $0.$ For weak
admissibility, we must have $t_{N}^{E}\left(  D\right)  =t_{H}^{E}\left(
D\right)  $ and $t_{N}^{E}\left(  D^{\ast}\right)  =t_{H}^{E}\left(  D^{\ast
}\right)  $ for any $\varphi$-stable subspace $D^{\ast}$ of $D.$ Clearly%
\[
t_{N}^{E}\left(  D\right)  =v_{p}\left(  \mathrm{Nm}_{\varphi}\left(  \vec
{a}\right)  \right)  +v_{p}(\mathrm{Nm}_{\varphi}(\vec{b}))+v_{p}\left(
\mathrm{Nm}_{\varphi}\left(  \vec{c}\right)  \right)
\]
and%
\[
t_{H}^{E}\left(  D\right)  =\sum\limits_{i\in I_{1}}\left(  k_{1}%
(i)+k_{2}(i)\right)  +\sum\limits_{i\in I_{2}}2k(i)+\sum\limits_{i\in I_{3}%
}k(i).
\]
Also,$\ t_{N}^{E}\left(  D_{0}\right)  =v_{p}\left(  \mathrm{Nm}_{\varphi
}\left(  \vec{a}\right)  \right)  $ and
\begin{align*}
t_{H}^{E}\left(  D_{0}\right)  =  &  \sum\limits_{i\in I_{1}}\left\{
\begin{array}
[c]{l}%
0\ \text{if\ }x_{2}(i)\neq0,\ \\
k_{1}(i)\ \text{if }x_{2}(i)=0\ \text{and }x_{1}(i)\not =0,\\
k_{2}(i)\ \text{if }x_{1}(i)=x_{2}(i)=0
\end{array}
\right\}  +\sum\limits_{i\in I_{2}}\left\{
\begin{array}
[c]{l}%
0\ \text{if }x_{2}(i)=1,\\
k(i)\ \text{if }x_{1}(i)=0
\end{array}
\right\}  +\\
&  \ \ \ \ \ \text{\ }\\
+  &  \sum\limits_{i\in I}\left\{
\begin{array}
[c]{l}%
0\ \text{if\ }x_{1}(i)=1\ \text{or }x_{2}^{\prime\prime}(i)=1,\\
k(i)\ \text{if }x_{1}(i)=x_{2}^{\prime\prime}(i)=0
\end{array}
\right\}  .
\end{align*}
Similarly, we must have%
\begin{align*}
&  v_{p}\left(  \mathrm{Nm}_{\varphi}(\vec{b})\right)  \geq\sum\limits_{i\in
I_{1}}\left\{
\begin{array}
[c]{l}%
0\ \text{if\ }x_{2}^{\prime}(i)=1,\ \\
k_{1}(i)\ \text{if }x_{2}^{\prime}(i)=0
\end{array}
\right\}  +\sum\limits_{i\in I_{2}}\left\{
\begin{array}
[c]{l}%
k(i)\ \text{if }x_{2}^{\prime}(i)=0,\\
0\ \text{if }x_{2}^{\prime}(i)=1
\end{array}
\right\}  ,\\
& \\
&  v_{p}\left(  \mathrm{Nm}_{\varphi}(\vec{c})\right)  \geq0,\\
& \\
&  v_{p}\left(  \mathrm{Nm}_{\varphi}\left(  \vec{a}\right)  \right)
+v_{p}\left(  \mathrm{Nm}_{\varphi}(\vec{b})\right)  \geq\sum\limits_{i\in
J_{1}}\left\{
\begin{array}
[c]{l}%
k_{1}(i)\ \text{if }x_{2}(i)+x_{1}(i)x_{2}^{\prime}(i)\not =0,\\
k_{2}(i)\ \text{if }x_{2}(i)+x_{1}(i)x_{2}^{\prime}(i)=0\ \text{and\ }%
x_{2}^{\prime}(i)=1,\\
k_{1}(i)+k_{2}(i)\ \text{if }x_{2}(i)=x_{2}^{\prime}(i)=0
\end{array}
\right\}  +\\
& \\
&  +\sum\limits_{i\in J_{2}}\left\{
\begin{array}
[c]{l}%
k(i)\ \ \text{if }x_{2}^{\prime}(i)=1\ \text{or }x_{2}^{\prime}%
(i)=0\ \text{and }x_{2}(i)\neq0,\\
2k(i)\ \text{if }x_{2}(i)=x_{2}^{\prime}(i)=0
\end{array}
\right\}  +\sum\limits_{i\in J_{3}}\left\{
\begin{array}
[c]{l}%
0\ \text{if\ }x_{2}^{\prime\prime}(i)=1,\\
k(i)\ \text{if }x_{2}^{\prime\prime}(i)=0
\end{array}
\right\}  ,\\
& \\
&  v_{p}\left(  \mathrm{Nm}_{\varphi}\left(  \vec{a}\right)  \right)
+v_{p}\left(  \mathrm{Nm}_{\varphi}(\vec{c})\right)  \geq\sum\limits_{i\in
J_{1}}\left\{
\begin{array}
[c]{l}%
k_{1}(i)\ \text{if }x_{1}(i)\not =0,\\
k_{2}(i)\ \text{if }x_{1}(i)=0
\end{array}
\right\}  +\sum\limits_{i\in J_{2}}k(i)+\sum\limits_{i\in J_{3}}\left\{
\begin{array}
[c]{l}%
0\ \text{if\ }x_{1}(i)=1,\\
k(i)\ \text{if }x_{1}(i)=0
\end{array}
\right\}  ,\\
&  \text{and}\\
&  v_{p}\left(  \mathrm{Nm}_{\varphi}(\vec{b})\right)  +v_{p}\left(
\mathrm{Nm}_{\varphi}(\vec{c})\right)  \geq\sum\limits_{i\in J_{1}}%
k_{1}(i)+\sum\limits_{i\in J_{2}}k(i).
\end{align*}

\end{proof}

\begin{proof}
[Proof of Proposition \ref{iso}]Let $h:\mathcal{D}\left(  \underline
{a},\underline{x}\right)  \rightarrow\mathcal{D}\left(  \underline{a_{1}%
},\underline{y}\right)  $ an $E^{\mid\mathcal{\tau}\mid}$-linear bijection.
The map $h$ is an isomorphism or filtered $\varphi$-modules if and only if
$h\left(  e_{\tau_{i}}\mathrm{Fil}^{j}\mathcal{D}\left(  \underline
{a},\underline{x}\right)  \right)  =e_{\tau_{i}}\mathrm{Fil}^{j}%
\mathcal{D}\left(  \underline{a},\underline{y}\right)  $ for all $i.$ Let
$H:=[h]_{\underline{e}}^{\underline{e}_{1}}=\left(  \vec{h}_{ij}\right)  .$
Since $h\varphi=\varphi_{1}h,$ we have $h\varphi^{f}=\varphi_{1}^{f}h,$ and
since $\varphi^{f}$ and $\varphi_{1}^{f}$ are $E^{\mid\mathcal{\tau}\mid}%
$-linear,\
\[
H\cdot\mathrm{diag}\left(  \mathrm{Nm}_{\varphi}(\vec{a}),\mathrm{Nm}%
_{\varphi}(\vec{b}),\mathrm{Nm}_{\varphi}(\vec{c})\right)  =\mathrm{diag}%
\left(  \mathrm{Nm}_{\varphi}(\vec{a}_{1}),\mathrm{Nm}_{\varphi}(\vec{b}%
_{1}),\mathrm{Nm}_{\varphi}(\vec{c}_{1})\right)  \cdot H,
\]
which implies that%
\begin{align}
&  \left(  \mathrm{Nm}_{\varphi}(\vec{a})-\mathrm{Nm}_{\varphi}(\vec{a}%
_{1})\right)  \vec{h}_{11}=\vec{0},\ \left(  \mathrm{Nm}_{\varphi}(\vec
{a})-\mathrm{Nm}_{\varphi}(\vec{b}_{1})\right)  \vec{h}_{21}=\vec{0},\ \left(
\mathrm{Nm}_{\varphi}(\vec{a})-\mathrm{Nm}_{\varphi}(\vec{c}_{1})\right)
\vec{h}_{31}=\vec{0},\nonumber\\
&  \left(  \mathrm{Nm}_{\varphi}(\vec{b})-\mathrm{Nm}_{\varphi}(\vec{a}%
_{1})\right)  \vec{h}_{12}=\vec{0},~\left(  \mathrm{Nm}_{\varphi}(\vec
{b})-\mathrm{Nm}_{\varphi}(\vec{b}_{1})\right)  \vec{h}_{22}=\vec{0},\ \left(
\mathrm{Nm}_{\varphi}(\vec{b})-\mathrm{Nm}_{\varphi}(\vec{c}_{1})\right)
\vec{h}_{32}=\vec{0},\label{2'}\\
&  \left(  \mathrm{Nm}_{\varphi}(\vec{c})-\mathrm{Nm}_{\varphi}(\vec{a}%
_{1})\right)  \vec{h}_{13}=\vec{0},\ \left(  \mathrm{Nm}_{\varphi}(\vec
{c})-\mathrm{Nm}_{\varphi}(\vec{b}_{1})\right)  \vec{h}_{23}=\vec{0},\ \left(
\mathrm{Nm}_{\varphi}(\vec{c})-\mathrm{Nm}_{\varphi}(\vec{c}_{1})\right)
\vec{h}_{33}=\vec{0}.\nonumber
\end{align}
Clearly $\{\mathrm{Nm}_{\varphi}(\vec{a}),\mathrm{Nm}_{\varphi}(\vec
{b}),\mathrm{Nm}_{\varphi}(\vec{c})\}=$ $\{\mathrm{Nm}_{\varphi}(\vec{a}%
_{1}),\mathrm{Nm}_{\varphi}(\vec{b}_{1}),\mathrm{Nm}_{\varphi}(\vec{c}_{1})\}$
and we have the following cases:

$\left(  1\right)  \ $If $\mathrm{Nm}_{\varphi}(\vec{a})=\mathrm{Nm}_{\varphi
}(\vec{a}_{1}),$ $\mathrm{Nm}_{\varphi}(\vec{b})=\mathrm{Nm}_{\varphi}(\vec
{b}_{1}),$ $\mathrm{Nm}_{\varphi}(\vec{c})=\mathrm{Nm}_{\varphi}(\vec{c}%
_{1}).$ Since the eigenvalues of frobenius are distinct, equations $\left(
\text{\ref{2'}}\right)  $ are equivalent to $\vec{h}_{ij}=\vec{0}$ for all
$i\neq j.$ We have $\left(  he_{0},he_{1},he_{2}\right)  =\left(  \vec{h}%
_{11}e_{0},\vec{h}_{22}e_{1},\vec{h}_{33}e_{2}\right)  .$

$\left(  a\right)  $ If the filtration of $e_{\tau_{i}}\mathrm{Fil}%
^{j}\mathcal{D}\left(  \underline{a},\underline{x}\right)  $ is given by
formula (\ref{1}). Then $h\left(  e_{\tau_{i}}\mathrm{Fil}^{j}\mathcal{D}%
\left(  \underline{a},\underline{x}\right)  \right)  =e_{\tau_{i}}%
\mathrm{Fil}^{j}\mathcal{D}\left(  \underline{a},\underline{y}\right)  $ is
equivalent to
\begin{align}
&  E\left(  e_{0}+y_{1}(i)e_{1}+y_{2}^{\prime\prime}(i)e_{2}\right)  =E\left(
h_{11}(i)e_{0}+x_{1}(i)h_{22}(i)e_{1}+x_{2}^{\prime\prime}(i)h_{33}%
(i)e_{2}\right)  ,\ \label{4'}\\
&  \text{where }x_{2}^{\prime\prime}(i)=x_{2}(i)+x_{1}(i)x_{2}^{\prime
}(i)\ \text{and }y_{2}^{\prime\prime}(i)=y_{2}(i)+y_{1}(i)y_{2}^{\prime
}(i),\text{\ and}\nonumber\\
&  E\left(  e_{0}+y_{2}(i)e_{2}\right)  \oplus E\left(  e_{1}+y_{2}^{\prime
}(i)e_{2}\right)  =E\left(  h_{11}(i)e_{0}+x_{2}(i)h_{33}(i)e_{2}\right)
\oplus E\left(  h_{22}(i)e_{1}+x_{2}^{\prime}(i)h_{33}(i)e_{2}\right)  .
\label{5'}%
\end{align}
Equation $\left(  \ref{4'}\right)  $ is equivalent to $y_{1}(i)h_{11}%
(i)e_{1}+y_{2}^{\prime\prime}(i)h_{11}(i)e_{2}=x_{1}(i)h_{22}(i)e_{1}%
+x_{2}^{\prime\prime}(i)h_{33}(i)e_{2}$ and the latter equivalent to%
\[
y_{1}(i)h_{11}(i)=x_{1}(i)h_{22}(i)\ \text{and\ }y_{2}^{\prime\prime}%
(i)h_{11}(i)=x_{2}^{\prime\prime}(i)h_{33}(i).
\]
Equation $\left(  \ref{5'}\right)  $ is equivalent to
\begin{align*}
h_{11}(i)e_{0}+x_{2}(i)h_{33}(i)e_{2}  &  =\lambda_{1}\left(  e_{0}%
+y_{2}(i)e_{2}\right)  ,\\
\left(  h_{22}(i)e_{1}+x_{2}^{\prime}(i)h_{33}(i)e_{2}\right)   &
=\lambda_{2}\left(  e_{1}+y_{2}^{\prime}(i)e_{2}\right)  ,
\end{align*}
for some $\lambda_{1},\lambda_{2}\in E.$ Hence $\lambda_{1}=h_{11}(i),$
$\lambda_{2}=h_{22}(i),$ $h_{11}(i)y_{2}(i)=x_{2}(i)h_{33}(i)\ $%
and\ $h_{22}(i)y_{2}^{\prime}(i)=x_{2}^{\prime}(i)h_{33}(i).$ We have%
\begin{align}
h_{11}(i)y_{2}(i)  &  =x_{2}(i)h_{33}(i)\ \text{and}\ h_{22}(i)y_{2}^{\prime
}(i)=x_{2}^{\prime}(i)h_{33}(i),\label{8'}\\
y_{1}(i)h_{11}(i)  &  =x_{1}(i)h_{22}(i)\ \text{and\ }y_{2}^{\prime\prime
}(i)h_{11}(i)=x_{2}^{\prime\prime}(i)h_{33}(i). \label{9'}%
\end{align}

\noindent Since $x_{2}^{\prime}(i),y_{2}^{\prime}(i)\in\{0,1\},$
$h_{22}(i)y_{2}^{\prime}(i)=x_{2}^{\prime}(i)h_{33}(i)$ implies that
$x_{2}^{\prime}(i)=y_{2}^{\prime}(i)$ and $h_{22}(i)=h_{33}(i)$ if
$x_{2}^{\prime}(i)=y_{2}^{\prime}(i)=1.$ We have$\ h_{11}(i)y_{2}%
(i)=x_{2}(i)h_{33}(i)\ ,\ h_{22}(i)y_{2}^{\prime}(i)=x_{2}^{\prime}%
(i)h_{33}(i),\ y_{1}(i)h_{11}(i)=x_{1}(i)h_{22}(i),\ $and$\ y_{2}%
^{\prime\prime}(i)h_{11}(i)=x_{2}^{\prime\prime}(i)h_{33}(i).$

$\left(  i\right)  $ If $x_{2}^{\prime}(i)=y_{2}^{\prime}(i)=1.$
Then$\ h_{11}(i)y_{2}(i)=x_{2}(i)h_{33}(i),\ h_{22}(i)=h_{33}(i),\ y_{1}%
(i)h_{11}(i)=x_{1}(i)h_{22}(i),\ $and $\left(  y_{1}(i)+y_{2}(i)\right)
h_{11}(i)=\left(  x_{1}(i)+x_{2}(i)\right)  h_{33}(i),$ or equivalently,
\begin{align*}
y_{2}(i)h_{11}(i)  &  =x_{2}(i)h_{22}(i)\ \\
y_{1}(i)h_{11}(i)  &  =x_{1}(i)h_{22}(i)\\
h_{22}(i)  &  =h_{33}(i).
\end{align*}

$\left(  ia\right)  $ If $x_{2}(i)=1.$ Then $y_{2}(i)=1$ and $h_{11}%
(i)=h_{22}(i),\ y_{1}(i)=x_{1}(i),\ $and $h_{22}(i)=h_{33}(i).$ Then $h\left(
e_{\tau_{i}}\mathrm{Fil}^{j}\mathcal{D}\left(  \underline{a},\underline
{x}\right)  \right)  =e_{\tau_{i}}\mathrm{Fil}^{j}\mathcal{D}\left(
\underline{a},\underline{y}\right)  $ is equivalent to%
\[
x_{2}(i)=x_{2}^{\prime}(i)=y_{2}(i)=y_{2}^{\prime}(i)=1,\ \text{and}%
\ y_{1}(i)=x_{1}(i).
\]

$\left(  ib\right)  $ If $x_{2}(i)=0.$ Then%
\begin{align*}
y_{2}(i)h_{11}(i)  &  =x_{2}(i)h_{22}(i),\\
y_{1}(i)h_{11}(i)  &  =x_{1}(i)h_{22}(i),\\
h_{22}(i)  &  =h_{33}(i),
\end{align*}
hence $x_{2}(i)=y_{2}(i)=0$ and $x_{1}(i)\not =0$ if and only if $y_{1}%
(i)\neq0.$ Then $h\left(  e_{\tau_{i}}\mathrm{Fil}^{j}\mathcal{D}\left(
\underline{a},\underline{x}\right)  \right)  =e_{\tau_{i}}\mathrm{Fil}%
^{j}\mathcal{D}\left(  \underline{a},\underline{y}\right)  $ is equivalent to%
\[
x_{2}^{\prime}(i)=y_{2}^{\prime}(i)=1,\ x_{2}(i)=y_{2}(i)=0,\ \text{and\ }%
x_{1}(i)\not =0\ \text{if and only if\ }y_{1}(i)\neq0.
\]

$\left(  ii\right)  \ $If $x_{2}^{\prime}(i)=y_{2}^{\prime}(i)=0.$%
\begin{align*}
y_{1}(i)h_{11}(i)  &  =x_{1}(i)h_{22}(i)\\
y_{2}(i)h_{11}(i)  &  =x_{2}(i)h_{33}(i).
\end{align*}
Then equations $\left(  \ref{8'}\right)  ,$ $\left(  \ref{9'}\right)  $ are
equivalent to$\ h_{11}(i)y_{2}(i)=x_{2}(i)h_{33}(i)\ $and $y_{1}%
(i)h_{11}(i)=x_{1}(i)h_{22}(i).$ Then $h\left(  e_{\tau_{i}}\mathrm{Fil}%
^{j}\mathcal{D}\left(  \underline{a},\underline{x}\right)  \right)
=e_{\tau_{i}}\mathrm{Fil}^{j}\mathcal{D}\left(  \underline{a},\underline
{y}\right)  $ is equivalent to $x_{2}(i)=1\ $if and only if $y_{2}(i)=1$ and
$x_{1}(i)\neq0\ $if and only if $y_{1}(i)\neq0.$%
\[
x_{2}^{\prime}(i)=y_{2}^{\prime}(i)=0,\ \left(  x_{2}(i)=1\ \text{if and only
if\ }y_{2}(i)=1\right)  \ \text{and\ }\left(  x_{1}(i)\neq0\ \text{if and only
if }y_{1}(i)\neq0\right)  .
\]

Hence $e_{\tau_{i}}\mathcal{D}\left(  \underline{a},\underline{x}\right)
\ $and $e_{\tau_{i}}\mathcal{D}\left(  \underline{a_{1}},\underline{y}\right)
$ are isomorphic if and only if either

$x_{2}^{\prime}(i)=y_{2}^{\prime}(i)=1,\ x_{2}(i)=y_{2}(i)=1,\ $%
and$\ y_{1}(i)=x_{1}(i)$ or

$x_{2}^{\prime}(i)=y_{2}^{\prime}(i)=1,\ x_{2}(i)=y_{2}(i)=0,\ $%
and\ $x_{1}(i)\not =0\ $if and only if\ $y_{1}(i)\neq0,$ or

$x_{2}^{\prime}(i)=y_{2}^{\prime}(i)=0,\ x_{2}(i)=1\ $if and only
if\ $y_{2}(i)=1,\ $and\ $x_{1}(i)\neq0\ $if and only if $y_{1}(i)\neq0.$

$\left(  b\right)  $ If the filtration of $e_{\tau_{i}}\mathrm{Fil}%
^{j}\mathcal{D}\left(  \underline{a},\underline{x}\right)  $ is given by
formula (\ref{fil1}). Then $h\left(  e_{\tau_{i}}\mathrm{Fil}^{j}%
\mathcal{D}\left(  \underline{a},\underline{x}\right)  \right)  =e_{\tau_{i}%
}\mathrm{Fil}^{j}\mathcal{D}\left(  \underline{a},\underline{y}\right)  $ is
equivalent to
\[
E\left(  e_{0}+y_{2}(i)e_{2}\right)  \oplus E\left(  e_{1}+y_{2}^{\prime
}(i)e_{2}\right)  =E\left(  h_{11}(i)e_{0}+x_{2}(i)h_{33}(i)e_{2}\right)
\oplus E\left(  h_{22}(i)e_{1}+x_{2}^{\prime}(i)h_{33}(i)e_{2}\right)
\]
and the latter equivalent to
\begin{align*}
h_{11}(i)e_{0}+x_{2}(i)h_{33}(i)e_{2}  &  =\lambda_{1}\left(  e_{0}%
+y_{2}(i)e_{2}\right)  ,\\
\left(  h_{22}(i)e_{1}+x_{2}^{\prime}(i)h_{33}(i)e_{2}\right)   &
=\lambda_{2}\left(  e_{1}+y_{2}^{\prime}(i)e_{2}\right)  ,
\end{align*}
for some $\lambda_{1},\lambda_{2}\in E.$ Hence $\lambda_{1}=h_{11}(i),$
$\lambda_{1}y_{2}(i)=x_{2}(i)h_{33}(i),$ $\lambda_{2}=h_{22}(i),$ and
$\lambda_{2}y_{2}^{\prime}(i)=x_{2}^{\prime}(i)h_{33}(i).$ These imply that
$y_{2}(i)h_{11}(i)=x_{2}(i)h_{33}(i),$ and $h_{22}(i)y_{2}^{\prime}%
(i)=x_{2}^{\prime}(i)h_{33}(i),$ hence%
\[
h\left(  e_{\tau_{i}}\mathrm{Fil}^{j}\mathcal{D}\left(  \underline
{a},\underline{x}\right)  \right)  =e_{\tau_{i}}\mathrm{Fil}^{j}%
\mathcal{D}\left(  \underline{a},\underline{y}\right)
\]
if and only if $x_{2}(i)\not =0\ $if and only if $y_{2}(i),$ and
$x_{2}^{\prime}(i)\not =0\ $if and only if $y_{2}^{\prime}(i)\neq0.$ Since
$x_{2}(i),y_{2}(i),x_{2}^{\prime}(i),y_{2}^{\prime}(i)\in\{0,1\}$ this is
equivalent to $x_{2}(i)=y_{2}(i)$ and $x_{2}^{\prime}(i)=y_{2}^{\prime}(i).$

$\left(  c\right)  $ If the filtration of $e_{\tau_{i}}\mathrm{Fil}%
^{j}\mathcal{D}\left(  \underline{a},\underline{x}\right)  $ is given by
formula (\ref{filtr'}). Then $h\left(  e_{\tau_{i}}\mathrm{Fil}^{j}%
\mathcal{D}\left(  \underline{a},\underline{x}\right)  \right)  =e_{\tau_{i}%
}\mathrm{Fil}^{j}\mathcal{D}\left(  \underline{a},\underline{y}\right)  $ is
equivalent to%
\[
E\left(  e_{0}+y_{1}(i)e_{1}+y_{2}^{\prime\prime}(i)e_{2}\right)  =E\left(
h_{11}(i)e_{0}+x_{1}(i)h_{22}(i)e_{1}+x_{2}^{\prime\prime}(i)h_{33}%
(i)e_{2}\right)  ,\
\]
which is in turn equivalent to $y_{1}(i)h_{11}(i)=x_{1}(i)h_{22}%
(i)\ $and\ $y_{2}^{\prime\prime}(i)h_{11}(i)=x_{2}^{\prime\prime}%
(i)h_{33}(i).$ Hence $h\left(  e_{\tau_{i}}\mathrm{Fil}^{j}\mathcal{D}\left(
\underline{a},\underline{x}\right)  \right)  =e_{\tau_{i}}\mathrm{Fil}%
^{j}\mathcal{D}\left(  \underline{a},\underline{y}\right)  $ if and only
$x_{1}(i)=y_{1}(i)$ and $x_{2}^{\prime\prime}(i)=y_{2}^{\prime\prime}(i).$

$\left(  2\right)  $ If $\mathrm{Nm}_{\varphi}(\vec{a})=\mathrm{Nm}_{\varphi
}(\vec{b}_{1}),\ \mathrm{Nm}_{\varphi}(\vec{b})=\mathrm{Nm}_{\varphi}(\vec
{a}_{1}),\ \mathrm{Nm}_{\varphi}(\vec{c})=\mathrm{Nm}_{\varphi}(\vec{c}_{1}).$
In this case equations $\left(  \ref{2'}\right)  $ imply that
\[
H=\left(
\begin{array}
[c]{ccc}%
\vec{0} & \vec{h}_{12} & \vec{0}\\
\vec{h}_{21} & \vec{0} & \vec{0}\\
\vec{0} & \vec{0} & \vec{h}_{33}%
\end{array}
\right)
\]
and $\left(  he_{0},he_{1},he_{2}\right)  =\left(
\begin{array}
[c]{ccc}%
\vec{h}_{21}e_{1}, & \vec{h}_{12}e_{0}, & \vec{h}_{33}e_{2}%
\end{array}
\right)  .$

$\left(  a\right)  $ If the filtration of $e_{\tau_{i}}\mathrm{Fil}%
^{j}\mathcal{D}\left(  \underline{a},\underline{x}\right)  $ is given by
formula (\ref{1}). Then $h\left(  e_{\tau_{i}}\mathrm{Fil}^{j}\mathcal{D}%
\left(  \underline{a},\underline{x}\right)  \right)  =e_{\tau_{i}}%
\mathrm{Fil}^{j}\mathcal{D}\left(  \underline{a},\underline{y}\right)  $ is
equivalent to%
\begin{align}
&  E\left(  e_{0}+y_{1}(i)e_{1}+y_{2}^{\prime\prime}(i)e_{2}\right)  =E\left(
x_{1}(i)h_{12}(i)e_{0}+h_{21}(i)e_{1}+x_{2}^{\prime\prime}(i)h_{33}%
(i)e_{2}\right)  ,\ \label{10'}\\
&  \text{where }x_{2}^{\prime\prime}(i)=x_{2}(i)+x_{1}(i)x_{2}^{\prime
}(i)\ \text{and }y_{2}^{\prime\prime}(i)=y_{2}(i)+y_{1}(i)y_{2}^{\prime
}(i),\text{\ and}\nonumber\\
&  E\left(  e_{0}+y_{2}(i)e_{2}\right)  \oplus E\left(  e_{1}+y_{2}^{\prime
}(i)e_{2}\right)  =E\left(  h_{21}(i)e_{1}+x_{2}(i)h_{33}(i)e_{2}\right)
\oplus E\left(  h_{12}(i)e_{0}+x_{2}^{\prime}(i)h_{33}(i)e_{2}\right)  .
\label{11'}%
\end{align}
Equation $\left(  \ref{10'}\right)  $ is equivalent to $h_{21}(i)e_{1}%
+x_{2}^{\prime\prime}(i)h_{33}(i)e_{2}=y_{1}(i)x_{1}(i)h_{12}(i)e_{1}%
+y_{2}^{\prime\prime}(i)x_{1}(i)h_{12}(i)e_{2}$ or equivalently%
\[
h_{21}(i)=y_{1}(i)x_{1}(i)h_{12}(i)\ \text{and\ }x_{2}^{\prime\prime}%
(i)h_{33}(i)=y_{2}^{\prime\prime}(i)x_{1}(i)h_{12}(i).
\]
Equation $\left(  \ref{11'}\right)  $ is equivalent to$\ h_{21}(i)e_{1}%
+x_{2}(i)h_{33}(i)e_{2}=\lambda_{1}\left(  e_{1}+y_{2}^{\prime}(i)e_{2}%
\right)  $ and $h_{12}(i)e_{0}+x_{2}^{\prime}(i)h_{33}(i)e_{2}=\lambda
_{2}\left(  e_{0}+y_{2}(i)e_{2}\right)  ,$ for some $\lambda_{1},\lambda
_{2}\in E.$ So $\lambda_{1}=h_{21}(i),\ \lambda_{2}=h_{12}(i),\ x_{2}%
(i)h_{33}(i)=h_{21}(i)y_{2}^{\prime}(i),\ x_{2}^{\prime}(i)h_{33}%
(i)=h_{12}(i)y(i)_{2},\ $

$h_{21}(i)=y_{1}(i)x_{1}(i)h_{12}(i),\ x_{2}^{\prime\prime}(i)h_{33}%
(i)=y_{2}^{\prime\prime}(i)x_{1}(i)h_{12}(i).$

$\left(  i\right)  $ If $x_{2}^{\prime}(i)=y_{2}^{\prime}(i)=0.$ Then
$x_{2}(i)=y_{2}(i)=0$ and $h_{21}(i)=y_{1}(i)x_{1}(i)h_{12}(i).$ In this case,
$e_{\tau_{i}}\mathcal{D}\left(  \underline{a},\underline{x}\right)  \simeq
e_{\tau_{i}}\mathcal{D}\left(  \underline{a_{1}},\underline{y}\right)  $ if
and only if $x_{2}^{\prime}(i)=y_{2}^{\prime}(i)=\ x_{2}(i)=y_{2}(i)=0,\ $and
$x_{1}(i)y_{1}(i)\neq0.$

$\left(  ii\right)  \ $If $x_{2}^{\prime}(i)=0,\ y_{2}^{\prime}(i)=1.$ Then
$y_{2}(i)=0,\ x_{1}(i)x_{2}(i)y_{1}(i)\neq0,\ h_{21}(i)=x_{2}(i)h_{33}%
(i),\ h_{21}(i)=y_{1}(i)x_{1}(i)h_{12}(i),$ and$\ x_{2}(i)h_{33}%
(i)=h_{21}(i),$ $h_{21}(i)=y_{1}(i)x_{1}(i)h_{12}(i).$ Hence$\ e_{\tau_{i}%
}\mathcal{D}\left(  \underline{a},\underline{x}\right)  \simeq e_{\tau_{i}%
}\mathcal{D}\left(  \underline{a_{1}},\underline{y}\right)  $ if and only if
$x_{2}(i)=x_{2}^{\prime}(i)=y_{2}^{\prime}(i)=1,\ y_{2}=0,\ y_{1}%
(i)x_{1}(i)\neq0.$

$\left(  iii\right)  $ $\ $If $x_{2}^{\prime}(i)=1,\ y_{2}^{\prime}(i)=0.$
Then$\ x_{2}(i)=0,$ $h_{33}(i)=h_{12}(i)y_{2}(i),$ $h_{21}(i)=y_{1}%
(i)x_{1}(i)h_{12}(i),$ and we must have $x_{1}(i)y_{1}(i)\neq0\ $and
$y_{2}(i)=1.$ Hence $e_{\tau_{i}}\mathcal{D}\left(  \underline{a}%
,\underline{x}\right)  \simeq e_{\tau_{i}}\mathcal{D}\left(  \underline{a_{1}%
},\underline{y}\right)  $ if and only if $x_{2}^{\prime}(i)=y_{2}%
(i)=1,\ x_{2}(i)=y_{2}^{\prime}(i)=0,\ x_{1}(i)y_{1}(i)\neq0.$

$\left(  iv\right)  $ If $x_{2}^{\prime}(i)=y_{2}^{\prime}(i)=0,$
then$\ x_{2}(i)=0,\ y_{2}(i)=0,\ $and $h_{21}(i)=y_{1}(i)x_{1}(i)h_{12}(i).$
In this case $e_{\tau_{i}}\mathcal{D}\left(  \underline{a},\underline
{x}\right)  \simeq e_{\tau_{i}}\mathcal{D}\left(  \underline{a_{1}}%
,\underline{y}\right)  $ if%
\[
x_{2}(i)=x_{2}^{\prime}(i)=y_{2}(i)=y_{2}^{\prime}(i)=0,\ \text{and }%
x_{1}(i)y_{1}(i)\neq0.
\]

$\left(  b\right)  $ If the filtration of $e_{\tau_{i}}\mathrm{Fil}%
^{j}\mathcal{D}\left(  \underline{a},\underline{x}\right)  $ is given by
formula (\ref{fil1}). Then $h\left(  e_{\tau_{i}}\mathrm{Fil}^{j}%
\mathcal{D}\left(  \underline{a},\underline{x}\right)  \right)  =e_{\tau_{i}%
}\mathrm{Fil}^{j}\mathcal{D}\left(  \underline{a},\underline{y}\right)  $ is
equivalent to%
\[
E\left(  e_{0}+y_{2}(i)e_{2}\right)  \oplus E\left(  e_{1}+y_{2}^{\prime
}(i)e_{2}\right)  =E\left(  h_{21}(i)e_{1}+x_{2}(i)h_{33}(i)e_{2}\right)
\oplus E\left(  h_{12}(i)e_{0}+x_{2}^{\prime}(i)h_{33}(i)e_{2}\right)
\]
and the latter is equivalent to $h_{21}(i)e_{1}+x_{2}(i)h_{33}(i)e_{2}%
=\lambda_{1}\left(  e_{1}+y_{2}^{\prime}(i)e_{2}\right)  $ and $h_{12}%
(i)e_{0}+x_{2}^{\prime}(i)h_{33}(i)e_{2}=\lambda_{2}\left(  e_{0}%
+y_{2}(i)e_{2}\right)  ,$ for some $\lambda_{1},\lambda_{2}\in E.$ Hence
$h\left(  e_{\tau_{i}}\mathrm{Fil}^{j}\mathcal{D}\left(  \underline
{a},\underline{x}\right)  \right)  =e_{\tau_{i}}\mathrm{Fil}^{j}%
\mathcal{D}\left(  \underline{a},\underline{y}\right)  $ if and only if
$x_{2}(i)=y_{2}^{\prime}(i)$ and $x_{2}^{\prime}(i)=y_{2}(i).$

$\left(  c\right)  $ If the filtration of $e_{\tau_{i}}\mathrm{Fil}%
^{j}\mathcal{D}\left(  \underline{a},\underline{x}\right)  $ is given by
formula (\ref{filtr'}). Then $h\left(  e_{\tau_{i}}\mathrm{Fil}^{j}%
\mathcal{D}\left(  \underline{a},\underline{x}\right)  \right)  =e_{\tau_{i}%
}\mathrm{Fil}^{j}\mathcal{D}\left(  \underline{a},\underline{y}\right)  $ is
equivalent to%
\[
E\left(  e_{0}+y_{1}(i)e_{1}+y_{2}^{\prime\prime}(i)e_{2}\right)  =E\left(
x_{1}(i)h_{12}(i)e_{0}+h_{21}(i)e_{1}+x_{2}^{\prime\prime}(i)h_{33}%
(i)e_{2}\right)  ,\
\]
which is in turn equivalent to $h_{21}(i)=y_{1}(i)x_{1}(i)h_{12}%
(i)\ $and\ $x_{2}^{\prime\prime}(i)h_{33}(i)=y_{2}^{\prime\prime}%
(i)x_{1}(i)h_{12}(i).$ Hence $h\left(  e_{\tau_{i}}\mathrm{Fil}^{j}%
\mathcal{D}\left(  \underline{a},\underline{x}\right)  \right)  =e_{\tau_{i}%
}\mathrm{Fil}^{j}\mathcal{D}\left(  \underline{a},\underline{y}\right)  $ if
and only if $x_{1}(i)=y_{1}(i)=1$ and $x_{2}^{\prime\prime}(i)=y_{2}%
^{\prime\prime}(i).$

$\left(  3\right)  $ If $\mathrm{Nm}_{\varphi}(\vec{a})=\mathrm{Nm}_{\varphi
}(\vec{c}_{1}),$ $\mathrm{Nm}_{\varphi}(\vec{b})=\mathrm{Nm}_{\varphi}(\vec
{a}_{1}),$ $\mathrm{Nm}_{\varphi}(\vec{c})=\mathrm{Nm}_{\varphi}(\vec{b}%
_{1}).$ In this case equations $\left(  \ref{2'}\right)  $ imply that
\[
H=\left(
\begin{array}
[c]{ccc}%
\vec{0} & \vec{h}_{12} & \vec{0}\\
\vec{0} & \vec{0} & \vec{h}_{23}\\
\vec{h}_{31} & \vec{0} & \vec{0}%
\end{array}
\right)
\]
and $\left(  he_{0},he_{1},he_{2}\right)  =\left(  \vec{h}_{31}e_{2},\vec
{h}_{12}e_{0},\vec{h}_{23}e_{1}\right)  \allowbreak.$

$\left(  a\right)  $ If the filtration of $e_{\tau_{i}}\mathrm{Fil}%
^{j}\mathcal{D}\left(  \underline{a},\underline{x}\right)  $ is given by
formula (\ref{1}). Then $h\left(  e_{\tau_{i}}\mathrm{Fil}^{j}\mathcal{D}%
\left(  \underline{a},\underline{x}\right)  \right)  =e_{\tau_{i}}%
\mathrm{Fil}^{j}\mathcal{D}\left(  \underline{a},\underline{y}\right)  $ is
equivalent to%
\begin{align}
E\left(  e_{0}+y_{1}(i)e_{1}+y_{2}^{\prime\prime}(i)e_{2}\right)   &
=E\left(  x_{1}(i)h_{12}(i)e_{0}+x_{2}^{\prime\prime}(i)h_{23}(i)e_{1}%
+h_{31}(i)e_{2}\right)  ,\label{12'}\\
\text{where }x_{2}^{\prime\prime}(i)  &  =x_{2}(i)+x_{1}(i)x_{2}^{\prime
}(i)\ \text{and }y_{2}^{\prime\prime}(i)=y_{2}(i)+y_{1}(i)y_{2}^{\prime
}(i),\text{\ and}\nonumber\\
E\left(  e_{0}+y_{2}(i)e_{2}\right)  \oplus E\left(  e_{1}+y_{2}^{\prime
}(i)e_{2}\right)   &  =E\left(  x_{2}(i)h_{23}(i)e_{1}+h_{31}(i)e_{2}\right)
\oplus E\left(  h_{12}(i)e_{0}+x_{2}^{\prime}(i)h_{23}(i)e_{1}\right)  .
\label{13'}%
\end{align}
Equation $\left(  \ref{12'}\right)  $ is equivalent to $x_{2}^{\prime\prime
}(i)h_{23}(i)e_{1}+h_{31}(i)e_{2}=y_{1}(i)x_{1}h_{12}(i)e_{1}+y_{2}%
^{\prime\prime}(i)x_{1}(i)h_{12}(i)e_{2}$ if and only if
\begin{align*}
x_{2}^{\prime\prime}(i)h_{23}(i)  &  =y_{1}(i)x_{1}(i)h_{12}(i),\\
h_{31}(i)  &  =y_{2}^{\prime\prime}(i)x_{1}(i)h_{12}(i).
\end{align*}
Equation $\left(  \ref{13'}\right)  $ is equivalent to%
\begin{align*}
x_{2}(i)h_{23}(i)e_{1}+h_{31}(i)e_{2}  &  =\mu_{1}e_{1}+\mu_{1}y_{2}^{\prime
}(i)e_{2},\\
h_{12}(i)e_{0}+x_{2}^{\prime}(i)h_{23}(i)e_{1}  &  =\lambda_{2}e_{0}+\mu
_{2}e_{1}+\left(  \mu_{2}y_{2}^{\prime}(i)+\lambda_{2}y_{2}(i)\right)  e_{2}%
\end{align*}
for some $\lambda_{i},\mu_{i}\in E.$We have$\ \lambda_{2}=h_{12}(i),\ \mu
_{1}=x_{2}(i)h(i)_{23},\ \mu_{2}=x_{2}^{\prime}(i)h_{23}(i),\ x_{2}%
(i)h_{23}(i)y_{2}^{\prime}(i)=h_{31}(i),$

$x_{2}^{\prime}(i)h_{23}(i)y_{2}^{\prime}(i)+h_{12}(i)y_{2}(i)=0,\ x_{2}%
^{\prime\prime}(i)h_{23}(i)=y_{1}(i)x_{1}(i)h_{12}(i),\ h_{31}(i)=y_{2}%
^{\prime\prime}(i)x_{1}(i)h_{12}(i)$ so $y_{2}^{\prime}(i)x_{2}(i)h_{23}%
(i)=h_{31}(i),$ $h_{23}(i)y_{2}^{\prime}(i)x_{2}^{\prime}(i)+h_{12}%
(i)y_{2}(i)=0,$ $x_{2}^{\prime\prime}(i)h_{23}(i)=y_{1}(i)x_{1}(i)h_{12}(i),$
and $h_{31}(i)=y_{2}^{\prime\prime}(i)x_{1}(i)h_{12}(i).$

$\left(  i\right)  $ If $x_{2}^{\prime}(i)=0,$ then%
\begin{align*}
x_{2}(i)h_{23}(i)  &  =h_{31}(i)\\
y_{2}(i)  &  =0\\
x_{2}(i)h_{23}(i)  &  =y_{1}(i)x_{1}(i)h_{12}(i)\\
h_{31}(i)  &  =y_{1}(i)x_{1}(i)h_{12}(i)\\
x_{1}(i)x_{2}(i)y_{1}(i)  &  \neq0
\end{align*}
so we must have $x_{2}(i)=1,$ $y_{1}(i)x_{1}(i)\neq0,$ $h_{23}(i)=y_{1}%
(i)x_{1}(i)h_{12}(i),$ and $h_{31}(i)=h_{23}(i).$\ Then $h\left(  e_{\tau_{i}%
}\mathrm{Fil}^{j}\mathcal{D}\left(  \underline{a},\underline{x}\right)
\right)  =\mathrm{Fil}^{j}e_{\tau_{i}}\mathcal{D}\left(  \underline
{a},\underline{y}\right)  $ is equivalent to%
\[
x_{2}^{\prime}(i)=y_{2}(i)=0,\ x_{2}(i)=y_{2}^{\prime}(i)=1,\ x_{1}%
(i)y_{1}(i)\neq0.
\]
$\left(  ii\right)  $ If $x_{2}^{\prime}(i)=1.$ Then
\begin{align*}
y_{2}^{\prime}(i)x_{2}(i)h_{23}(i)  &  =h_{31}(i)\\
h_{23}(i)y_{2}^{\prime}(i)x_{2}^{\prime}(i)+h_{12}(i)y_{2}(i)  &  =0\\
x_{2}^{\prime\prime}(i)h_{23}(i)  &  =y_{1}(i)x_{1}(i)h_{12}(i)\\
h_{31}(i)  &  =y_{2}^{\prime\prime}(i)x_{1}(i)h_{12}(i)
\end{align*}
which implies that $y_{2}^{\prime}(i)=x_{2}(i)=y_{2}(i)=1$ and
\begin{align*}
h_{31}(i)  &  =h_{23}(i)\\
h_{23}(i)  &  =h_{12}(i)\\
\left(  x_{1}(i)+1\right)  h_{23}(i)  &  =y_{1}(i)x_{1}(i)h_{12}(i)\\
h_{31}(i)  &  =\left(  y_{1}(i)+1\right)  x_{1}(i)h_{12}(i),
\end{align*}
so we need$\ 0=\left(  y_{1}(i)+1\right)  x_{1}(i)+y_{2}(i).$ Then $h\left(
e_{\tau_{i}}\mathrm{Fil}^{j}\mathcal{D}\left(  \underline{a},\underline
{x}\right)  \right)  =\mathrm{Fil}^{j}e_{\tau_{i}}\mathcal{D}\left(
\underline{a},\underline{y}\right)  $ is equivalent to%
\[
x_{2}^{\prime}(i)=y_{2}^{\prime}(i)=x_{2}(i)=y_{2}(i)\neq0,\ \text{and
}\left(  y_{1}(i)+y_{2}(i)\right)  x_{1}(i)+y_{2}(i)x_{2}(i)=0.
\]

Summary:

$x_{2}^{\prime}(i)=y_{2}(i)=x_{2}(i)=0,\ y_{2}^{\prime}(i)=1,\ x_{1}%
(i)y_{1}(i)\neq0,$ or

$x_{2}^{\prime}(i)=y_{2}^{\prime}(i)=x_{2}(i)=y_{2}(i)=1,\ $and $\left(
y_{1}(i)+1\right)  x_{1}(i)+1=0.$

$\left(  b\right)  $ If the filtration of $e_{\tau_{i}}\mathrm{Fil}%
^{j}\mathcal{D}\left(  \underline{a},\underline{x}\right)  $ is given by
formula (\ref{fil1}). Then $h\left(  e_{\tau_{i}}\mathrm{Fil}^{j}%
\mathcal{D}\left(  \underline{a},\underline{x}\right)  \right)  =e_{\tau_{i}%
}\mathrm{Fil}^{j}\mathcal{D}\left(  \underline{a},\underline{y}\right)  $ is
equivalent to%
\[
E\left(  e_{0}+y_{2}(i)e_{2}\right)  \oplus E\left(  e_{1}+y_{2}^{\prime
}(i)e_{2}\right)  =E\left(  x_{2}(i)h_{23}(i)e_{1}+h_{31}(i)e_{2}\right)
\oplus E\left(  h_{12}(i)e_{0}+x_{2}^{\prime}(i)h_{23}(i)e_{1}\right)
\]
which is in turn equivalent to
\begin{align*}
x_{2}(i)h_{23}(i)e_{1}+h_{31}(i)e_{2}  &  =\mu_{1}e_{1}+\mu_{1}y_{2}^{\prime
}(i)e_{2},\\
h_{12}(i)e_{0}+x_{2}^{\prime}(i)h_{23}(i)e_{1}  &  =\lambda_{2}e_{0}+\mu
_{2}e_{1}+\left(  \mu_{2}y_{2}^{\prime}(i)+\lambda_{2}y_{2}(i)\right)  e_{2}%
\end{align*}
for some $\lambda_{i},\mu_{i}\in E.$ We have $x_{2}(i)y_{2}^{\prime}%
(i)h_{23}(i)=h_{31}(i),$ and $x_{2}^{\prime}(i)h_{23}(i)y_{2}^{\prime
}(i)+h_{12}(i)y_{2}(i)=0.$ Then $h\left(  e_{\tau_{i}}\mathrm{Fil}%
^{j}\mathcal{D}\left(  \underline{a},\underline{x}\right)  \right)
=e_{\tau_{i}}\mathrm{Fil}^{j}\mathcal{D}\left(  \underline{a},\underline
{y}\right)  $ if and only if $x_{2}(i)=y_{2}^{\prime}(i)=1$ and $x_{2}%
^{\prime}(i)=y_{2}(i).$

$\left(  c\right)  $ If the filtration of $e_{\tau_{i}}\mathrm{Fil}%
^{j}\mathcal{D}\left(  \underline{a},\underline{x}\right)  $ is given by
formula (\ref{filtr'}). Then $h\left(  e_{\tau_{i}}\mathrm{Fil}^{j}%
\mathcal{D}\left(  \underline{a},\underline{x}\right)  \right)  =e_{\tau_{i}%
}\mathrm{Fil}^{j}\mathcal{D}\left(  \underline{a},\underline{y}\right)  $ is
equivalent to $E\left(  e_{0}+y_{1}(i)e_{1}+y_{2}^{\prime\prime}%
(i)e_{2}\right)  =E\left(  x_{1}(i)h_{12}(i)e_{0}+x_{2}^{\prime\prime
}(i)h_{23}(i)e_{1}+h_{31}(i)e_{2}\right)  ,$ and the latter is equivalent to
$x_{2}^{\prime\prime}(i)h_{23}(i)=y_{1}(i)x_{1}(i)h_{12}(i),$ and
$h_{31}(i)=y_{2}^{\prime\prime}(i)x_{1}(i)h_{12}(i).$ Then $h\left(
e_{\tau_{i}}\mathrm{Fil}^{j}\mathcal{D}\left(  \underline{a},\underline
{x}\right)  \right)  =e_{\tau_{i}}\mathrm{Fil}^{j}\mathcal{D}\left(
\underline{a},\underline{y}\right)  $ if and only if $y_{2}^{\prime\prime
}(i)=x_{1}(i)=1$ and $x_{2}^{\prime\prime}(i)=y_{1}(i).$

$\left(  4\right)  $ If $\mathrm{Nm}_{\varphi}(\vec{a})=\mathrm{Nm}_{\varphi
}(\vec{a}_{1}),$ $\mathrm{Nm}_{\varphi}(\vec{b})=\mathrm{Nm}_{\varphi}(\vec
{c}_{1}),$ $\mathrm{Nm}_{\varphi}(\vec{c})=\mathrm{Nm}_{\varphi}(\vec{b}_{1})$
then%
\[
H=\left(
\begin{array}
[c]{ccc}%
\vec{h}_{11} & \vec{0} & \vec{0}\\
\vec{0} & \vec{0} & \vec{h}_{23}\\
\vec{0} & \vec{h}_{32} & \vec{0}%
\end{array}
\right)
\]
and%
\[
\left(  he_{0},he_{1},he_{2}\right)  =\allowbreak\left(
\begin{array}
[c]{ccc}%
\vec{h}_{11}e_{0}, & \vec{h}_{32}e_{2}, & \vec{h}_{23}e_{1}%
\end{array}
\right)  .
\]

$\left(  a\right)  $ If the filtration of $e_{\tau_{i}}\mathrm{Fil}%
^{j}\mathcal{D}\left(  \underline{a},\underline{x}\right)  $ is given by
formula (\ref{1}). Then $h\left(  e_{\tau_{i}}\mathrm{Fil}^{j}\mathcal{D}%
\left(  \underline{a},\underline{x}\right)  \right)  =e_{\tau_{i}}%
\mathrm{Fil}^{j}\mathcal{D}\left(  \underline{a},\underline{y}\right)  $ is
equivalent to%
\begin{align}
E\left(  e_{0}+y_{1}(i)e_{1}+y_{2}^{\prime\prime}(i)e_{2}\right)   &
=E\left(  h_{11}(i)e_{0}+x_{2}^{\prime\prime}(i)h_{23}(i)e_{1}+x_{1}%
(i)h_{32}(i)e_{2}\right)  ,\label{14'}\\
\text{where }x_{2}^{\prime\prime}(i)  &  =x_{2}(i)+x_{1}(i)x_{2}^{\prime
}(i)\ \text{and }y_{2}^{\prime\prime}(i)=y_{2}(i)+y_{1}(i)y_{2}^{\prime
}(i),\text{\ and}\nonumber\\
E\left(  e_{0}+y_{2}(i)e_{2}\right)  \oplus E\left(  e_{1}+y_{2}^{\prime
}(i)e_{2}\right)   &  =E\left(  h_{11}(i)e_{0}+x_{2}(i)h_{23}(i)e_{1}\right)
\oplus E\left(  x_{2}^{\prime}(i)h_{23}(i)e_{1}+h_{32}(i)e_{2}\right)  .
\label{15'}%
\end{align}
Equation $\left(  \ref{14'}\right)  $ is equivalent to $x_{2}^{\prime\prime
}(i)h_{32}(i)e_{1}+x_{1}(i)h_{32}(i)e_{2}=y_{1}(i)h_{11}(i)e_{1}+y_{2}%
^{\prime\prime}(i)h_{11}(i)e_{2}$ or equivalently%
\begin{align*}
x_{2}^{\prime\prime}(i)h_{23}(i)  &  =y_{1}(i)h_{11}(i),\\
x_{1}(i)h_{32}(i)  &  =y_{2}^{\prime\prime}(i)h_{11}(i).
\end{align*}
Equation $\left(  \ref{15'}\right)  $ is equivalent to%
\begin{align*}
h_{11}(i)e_{0}+x_{2}(i)h_{23}(i)e_{1}  &  =\lambda_{1}e_{0}+\mu_{1}%
e_{1}+\left(  \mu_{1}y_{2}^{\prime}(i)+\lambda_{1}y_{2}(i)\right)  e_{2},\\
x_{2}^{\prime}(i)h_{23}(i)e_{1}+h_{32}(i)e_{2}  &  =\mu_{2}e_{1}+\mu_{2}%
y_{2}^{\prime}(i)e_{2}%
\end{align*}
for some $\lambda_{i},\mu_{i}\in E.$ Then $x_{2}^{\prime\prime}(i)h_{23}%
(i)=y_{1}(i)h_{11}(i),\ x_{1}(i)h_{32}(i)=y_{2}^{\prime\prime}(i)h_{11}%
(i),\ y_{2}^{\prime}(i)x_{2}(i)h_{23}(i)+y_{2}(i)h_{11}(i)=0,$ and
$h_{32}(i)=y_{2}^{\prime}(i)x_{2}^{\prime}(i)h_{23}(i).$ Hence $y_{2}^{\prime
}(i)=x_{2}^{\prime}(i)=1,$ $h_{23}(i)=h_{32}(i)$ and
\begin{align*}
\left(  x_{1}(i)+x_{2}(i)\right)  h_{32}(i)  &  =y_{1}(i)h_{11}(i),\\
x_{1}(i)h_{32}(i)  &  =\left(  y_{1}(i)+y_{2}(i)\right)  h_{11}(i),\\
x_{2}(i)h_{32}(i)  &  =-y_{2}(i)h_{11}(i).
\end{align*}
if and only if $y_{2}^{\prime}(i)=x_{2}^{\prime}(i)=1,$ $h_{23}(i)=h_{32}(i),$
$x_{1}(i)h_{32}(i)=\left(  y_{1}(i)+y_{2}(i)\right)  h_{11}(i),\ $and
$x_{2}(i)h_{32}(i)=-y_{2}(i)h_{11}(i).$

$\left(  i\right)  \ $If $x_{2}(i)=1$ then $y_{2}(i)=1$ and $x_{1}%
(i)+y_{1}(i)+1=0,\ h_{32}(i)=-h_{11}(i).$ Then $h\left(  \mathrm{Fil}%
^{j}e_{\tau_{i}}\mathcal{D}\left(  \underline{a},\underline{x}\right)
\right)  =\mathrm{Fil}^{j}e_{\tau_{i}}\mathcal{D}\left(  \underline
{a},\underline{y}\right)  $ is equivalent to%
\[
y_{2}^{\prime}(i)=x_{2}^{\prime}(i)=1,\ x_{2}(i)y_{2}(i)\neq0,\ \text{and}%
\ x_{1}(i)y_{2}(i)+x_{2}(i)\left(  y_{1}(i)+y_{2}(i)\right)  =0.
\]
$\left(  ii\right)  $ If $x_{2}(i)=0,$ then $y_{2}^{\prime}(i)=x_{2}^{\prime
}(i)=1,\ y_{2}(i)=0,$ and
\[
x_{1}(i)h_{32}(i)=y_{1}(i)h_{11}(i),
\]
so we need $x_{1}(i)\neq0$ if and only if $y_{1}(i)\neq0$ and $h\left(
\mathrm{Fil}^{j}e_{\tau_{i}}\mathcal{D}\left(  \underline{a},\underline
{x}\right)  \right)  =\mathrm{Fil}^{j}e_{\tau_{i}}\mathcal{D}\left(
\underline{a},\underline{y}\right)  $ is equivalent to%
\[
x_{2}(i)=y_{2}(i)=0,\ x_{2}^{\prime}(i)=y_{2}^{\prime}(i)=1,\ \text{and\ }%
x_{1}(i)\neq0\ \text{if\ and\ only\ if\ }y_{1}(i)\neq0.
\]

$\left(  b\right)  $ If the filtration of $e_{\tau_{i}}\mathrm{Fil}%
^{j}\mathcal{D}\left(  \underline{a},\underline{x}\right)  $ is given by
formula (\ref{fil1}). Then $h\left(  e_{\tau_{i}}\mathrm{Fil}^{j}%
\mathcal{D}\left(  \underline{a},\underline{x}\right)  \right)  =e_{\tau_{i}%
}\mathrm{Fil}^{j}\mathcal{D}\left(  \underline{a},\underline{y}\right)  $ is
equivalent to%
\[
E\left(  e_{0}+y_{2}(i)e_{2}\right)  \oplus E\left(  e_{1}+y_{2}^{\prime
}(i)e_{2}\right)  =E\left(  h_{11}(i)e_{0}+x_{2}(i)h_{23}(i)e_{1}\right)
\oplus E\left(  x_{2}^{\prime}(i)h_{23}(i)e_{1}+h_{32}(i)e_{2}\right)
\]
and this is equivalent to
\begin{align*}
h_{11}(i)e_{0}+x_{2}(i)h_{23}(i)e_{1}  &  =\lambda_{1}e_{0}+\mu_{1}%
e_{1}+\left(  \mu_{1}y_{2}^{\prime}(i)+\lambda_{1}y_{2}(i)\right)  e_{2},\\
x_{2}^{\prime}(i)h_{23}(i)e_{1}+h_{32}(i)e_{2}  &  =\mu_{2}e_{1}+\mu_{2}%
y_{2}^{\prime}(i)e_{2}%
\end{align*}
for some $\lambda_{i},\mu_{i}\in E.$ We have $x_{2}(i)y_{2}^{\prime}%
(i)h_{23}(i)+h_{11}(i)y_{2}(i)=0,$ and $h_{32}(i)=x_{2}^{\prime}%
(i)y_{2}^{\prime}(i)h_{23}(i).$ Hence $h\left(  \mathrm{Fil}^{j}e_{\tau_{i}%
}\mathcal{D}\left(  \underline{a},\underline{x}\right)  \right)
=\mathrm{Fil}^{j}e_{\tau_{i}}\mathcal{D}\left(  \underline{a},\underline
{y}\right)  $ if and only if $x_{2}^{\prime}(i)=y_{2}^{\prime}(i)=1$ and
$x_{2}(i)=y_{2}(i).$

$\left(  c\right)  $ If the filtration of $e_{\tau_{i}}\mathrm{Fil}%
^{j}\mathcal{D}\left(  \underline{a},\underline{x}\right)  $ is given by
formula (\ref{filtr'}). Then $h\left(  e_{\tau_{i}}\mathrm{Fil}^{j}%
\mathcal{D}\left(  \underline{a},\underline{x}\right)  \right)  =e_{\tau_{i}%
}\mathrm{Fil}^{j}\mathcal{D}\left(  \underline{a},\underline{y}\right)  $ is
equivalent to%
\[
E\left(  e_{0}+y_{1}(i)e_{1}+y_{2}^{\prime\prime}(i)e_{2}\right)  =E\left(
h_{11}(i)e_{0}+x_{2}^{\prime\prime}(i)h_{23}(i)e_{1}+x_{1}(i)h_{32}%
(i)e_{2}\right)
\]
and this is equivalent to$\ x_{2}^{\prime\prime}(i)h_{23}(i)=y_{1}%
(i)h_{11}(i),$ and$\ x_{1}(i)h_{32}(i)=y_{2}^{\prime\prime}(i)h_{11}(i).$
Hence $h\left(  \mathrm{Fil}^{j}e_{\tau_{i}}\mathcal{D}\left(  \underline
{a},\underline{x}\right)  \right)  =\mathrm{Fil}^{j}e_{\tau_{i}}%
\mathcal{D}\left(  \underline{a},\underline{y}\right)  $ if and only if
$x_{2}^{\prime\prime}(i)=y_{1}(i)$ and $x_{1}(i)=y_{2}^{\prime\prime}(i).$

$\left(  5\right)  $ If $\mathrm{Nm}_{\varphi}(\vec{a})=\mathrm{Nm}_{\varphi
}(\vec{c}_{1}),$ $\mathrm{Nm}_{\varphi}(\vec{b})=\mathrm{Nm}_{\varphi}(\vec
{b}_{1}),$ $\mathrm{Nm}_{\varphi}(\vec{c})=\mathrm{Nm}_{\varphi}(\vec{a}%
_{1}).$ Then
\[
H=\left(
\begin{array}
[c]{ccc}%
\vec{0} & \vec{0} & \vec{h}_{13}\\
\vec{0} & \vec{h}_{22} & \vec{0}\\
\vec{h}_{31} & \vec{0} & \vec{0}%
\end{array}
\right)
\]
and%
\[
\left(  he_{0},he_{1},he_{2}\right)  =\allowbreak\left(
\begin{array}
[c]{ccc}%
\vec{h}_{31}e_{2}, & \vec{h}_{22}e_{1}, & \vec{h}_{13}e_{0}%
\end{array}
\right)
\]
$\left(  a\right)  $ If the filtration of $e_{\tau_{i}}\mathrm{Fil}%
^{j}\mathcal{D}\left(  \underline{a},\underline{x}\right)  $ is given by
formula (\ref{1}). Then $h\left(  e_{\tau_{i}}\mathrm{Fil}^{j}\mathcal{D}%
\left(  \underline{a},\underline{x}\right)  \right)  =e_{\tau_{i}}%
\mathrm{Fil}^{j}\mathcal{D}\left(  \underline{a},\underline{y}\right)  $ is
equivalent to%
\begin{align}
E\left(  e_{0}+y_{1}(i)e_{1}+y_{2}^{\prime\prime}(i)e_{2}\right)   &
=E\left(  x_{2}^{\prime\prime}(i)h_{13}(i)e_{0}+x_{1}(i)h_{22}(i)e_{1}%
+h_{31}(i)e_{2}\right)  ,\label{16'}\\
\text{where }x_{2}^{\prime\prime}(i)  &  =x_{2}(i)+x_{1}(i)x_{2}^{\prime
}(i)\ \text{and }y_{2}^{\prime\prime}(i)=y_{2}(i)+y_{1}(i)y_{2}^{\prime
}(i),\text{\ and}\nonumber\\
E\left(  e_{0}+y_{2}(i)e_{2}\right)  \oplus E\left(  e_{1}+y_{2}^{\prime
}(i)e_{2}\right)   &  =E\left(  x_{2}(i)h_{13}(i)e_{0}+h_{31}(i)e_{2}\right)
\oplus E\left(  x_{2}^{\prime}(i)h_{13}(i)e_{0}+h_{22}(i)e_{1}\right)  .
\label{17'}%
\end{align}
Equation $\left(  \ref{16'}\right)  $ is equivalent to $x_{1}(i)h_{22}%
(i)e_{1}+h_{31}(i)e_{2}=y_{1}(i)x_{2}^{\prime\prime}(i)h_{13}(i)e_{1}%
+y_{2}^{\prime\prime}(i)x_{2}^{\prime\prime}(i)h_{13}(i)e_{2}$ if and only if
\begin{align*}
x_{1}(i)h_{22}(i)  &  =y_{1}(i)x_{2}^{\prime\prime}(i)h_{13}(i),\\
h_{31}(i)  &  =y_{2}^{\prime\prime}(i)x_{2}^{\prime\prime}(i)h_{13}(i).
\end{align*}
Equation $\left(  \ref{17'}\right)  $ is equivalent to%
\begin{align*}
x_{2}(i)h_{13}(i)e_{0}+h_{31}(i)e_{2}  &  =\lambda_{1}e_{0}+\lambda_{1}%
y_{2}(i)e_{2},\\
x_{2}^{\prime}(i)h_{13}(i)e_{0}+h_{22}(i)e_{1}  &  =\lambda_{2}e_{0}+\mu
_{2}e_{1}+\left(  \mu_{2}y_{2}^{\prime}(i)+\lambda_{2}y_{2}(i)\right)  e_{2}%
\end{align*}
for some $\lambda_{i},\mu_{i}\in E.$ So,$\ h_{31}(i)=x_{2}(i)y_{2}%
(i)h_{13}(i),\ y_{2}^{\prime}(i)h_{22}(i)=-x_{2}^{\prime}(i)y_{2}%
(i)h_{13}(i),\ x_{1}(i)h_{22}(i)=y_{1}(i)x_{2}^{\prime\prime}(i)h_{13}%
(i),\ h_{31}(i)=y_{2}^{\prime\prime}(i)x_{2}^{\prime\prime}(i)h_{13}(i)\ $

and we must have $x_{2}(i)=y_{2}(i)=y_{2}^{\prime\prime}(i)=x_{2}%
^{\prime\prime}(i)=1,\ y_{2}^{\prime}(i)h_{22}(i)=-x_{2}^{\prime}%
(i)y_{2}(i)h_{13}(i),\ h_{31}(i)=h_{13}(i),\ $and$\ x_{1}(i)h_{22}%
(i)=y_{1}(i)h_{13}(i)$ $\left(  i\right)  $ $x_{2}^{\prime}(i)=1$ then
$y_{2}^{\prime}(i)=1$ and$\ h_{22}(i)=-h_{13}(i),\ h_{31}(i)=h_{13}%
(i),\ x_{1}(i)h_{22}(i)=y_{1}(i)\left(  x_{1}(i)+1\right)  h_{13}%
(i),\ $and$\ 1=\left(  y_{1}(i)+1\right)  \left(  x_{1}(i)+1\right)  .\left(
ia\right)  $ $x_{1}(i)\neq0.$ Then $x_{2}^{\prime}(i)=y_{2}^{\prime}(i)=1$ and
we need $y_{1}(i)\left(  x_{1}(i)+1\right)  h_{13}(i)=-x_{1}(i)h_{13}(i)$
which holds because $1=\left(  y_{1}(i)+1\right)  \left(  x_{1}(i)+1\right)
.$ Then $h\left(  \mathrm{Fil}^{j}e_{\tau_{i}}\mathcal{D}\left(  \underline
{a},\underline{x}\right)  \right)  =\mathrm{Fil}^{j}e_{\tau_{i}}%
\mathcal{D}\left(  \underline{a},\underline{y}\right)  $ is equivalent to
\[
x_{2}^{\prime}(i)=y_{2}^{\prime}(i)=x_{2}(i)=y_{2}(i)=1,\ x_{1}(i)y_{1}%
(i)\neq0,\ \text{and }1=\left(  y_{1}(i)+1\right)  \left(  x_{1}(i)+1\right)
.
\]
$\left(  ib\right)  $ If $x_{1}(i)=0.$ Then $x_{2}^{\prime}(i)=y_{2}^{\prime
}(i)=y_{2}(i)=x_{2}(i)=1$ and$\ h_{22}(i)=-h_{13}(i),$ $h_{31}(i)=h_{13}(i).$
Then $h\left(  \mathrm{Fil}^{j}e_{\tau_{i}}\mathcal{D}\left(  \underline
{a},\underline{x}\right)  \right)  =\mathrm{Fil}^{j}e_{\tau_{i}}%
\mathcal{D}\left(  \underline{a},\underline{y}\right)  $ is equivalent to%
\[
x_{1}(i)=y_{1}(i)=0,\ x_{2}^{\prime}(i)=y_{2}^{\prime}(i)=x_{2}(i)=y_{2}%
(i)=1.
\]

$\left(  ii\right)  $ If $x_{2}^{\prime}(i)=0=y_{2}^{\prime}(i),$
then$\ x_{2}(i)=y_{2}(i)=1$ and$\ x_{1}(i)h_{22}(i)=y_{1}(i)h_{13}%
(i),\ h_{31}(i)=h_{13}(i).$

$\left(  iia\right)  $ If $x_{1}(i)=0.$ Then $x_{2}^{\prime}(i)=y_{2}^{\prime
}(i)=0,$ $x_{2}(i)=y_{2}(i)=1,\ y_{1}(i)h_{13}(i)=0$ and $h_{31}%
(i)=h_{13}(i).$ Then $h\left(  \mathrm{Fil}^{j}e_{\tau_{i}}\mathcal{D}\left(
\underline{a},\underline{x}\right)  \right)  =\mathrm{Fil}^{j}e_{\tau_{i}%
}\mathcal{D}\left(  \underline{a},\underline{y}\right)  $ is equivalent to%
\[
x_{1}(i)=y_{1}(i)=x_{2}^{\prime}(i)=y_{2}^{\prime}(i)=0,\ \text{and }%
x_{2}(i)=y_{2}(i)=1.
\]
$\left(  iib\right)  $\ If$\ x_{1}(i)\neq0,\ $then we must have $x_{2}%
^{\prime}(i)=y_{2}^{\prime}(i)=0,\ x_{2}(i)=y_{2}(i)=1,\ x_{1}(i)h_{22}%
(i)=y_{1}(i)h_{13}(i)\ $and $y_{1}(i)\neq0.$ Then $h\left(  \mathrm{Fil}%
^{j}e_{\tau_{i}}\mathcal{D}\left(  \underline{a},\underline{x}\right)
\right)  =\mathrm{Fil}^{j}e_{\tau_{i}}\mathcal{D}\left(  \underline
{a},\underline{y}\right)  $ is equivalent to
\[
x_{2}^{\prime}(i)=y_{2}^{\prime}(i)=0,\ x_{2}(i)=y_{2}(i)=1,\ \text{and }%
x_{1}(i)y_{1}(i)\neq0.
\]

$\left(  b\right)  $ If the filtration of $e_{\tau_{i}}\mathrm{Fil}%
^{j}\mathcal{D}\left(  \underline{a},\underline{x}\right)  $ is given by
formula (\ref{fil1}). Then $h\left(  e_{\tau_{i}}\mathrm{Fil}^{j}%
\mathcal{D}\left(  \underline{a},\underline{x}\right)  \right)  =e_{\tau_{i}%
}\mathrm{Fil}^{j}\mathcal{D}\left(  \underline{a},\underline{y}\right)  $ is
equivalent to%
\[
E\left(  e_{0}+y_{2}(i)e_{2}\right)  \oplus E\left(  e_{1}+y_{2}^{\prime
}(i)e_{2}\right)  =E\left(  x_{2}(i)h_{13}(i)e_{0}+h_{31}(i)e_{2}\right)
\oplus E\left(  x_{2}^{\prime}(i)h_{13}(i)e_{0}+h_{22}(i)e_{1}\right)
\]
which is equivalent to
\begin{align*}
x_{2}(i)h_{13}(i)e_{0}+h_{31}(i)e_{2}  &  =\lambda_{1}e_{0}+\lambda_{1}%
y_{2}(i)e_{2},\\
x_{2}^{\prime}(i)h_{13}(i)e_{0}+h_{22}(i)e_{1}  &  =\lambda_{2}e_{0}+\mu
_{2}e_{1}+\left(  \mu_{2}y_{2}^{\prime}(i)+\lambda_{2}y_{2}(i)\right)  e_{2}%
\end{align*}
for some $\lambda_{i},\mu_{i}\in E.$ Then $y_{2}(i)x_{2}(i)h_{13}%
(i)=h_{31}(i)$ and $h_{22}(i)y_{2}^{\prime}(i)+x_{2}^{\prime}(i)h_{13}%
(i)y_{2}(i)=0.$ Hence $h\left(  \mathrm{Fil}^{j}e_{\tau_{i}}\mathcal{D}\left(
\underline{a},\underline{x}\right)  \right)  =\mathrm{Fil}^{j}e_{\tau_{i}%
}\mathcal{D}\left(  \underline{a},\underline{y}\right)  $ if and only if
$y_{2}(i)=x_{2}(i)=1$ and $y_{2}^{\prime}(i)=x_{2}^{\prime}(i).$

$\left(  c\right)  $ If the filtration of $e_{\tau_{i}}\mathrm{Fil}%
^{j}\mathcal{D}\left(  \underline{a},\underline{x}\right)  $ is given by
formula (\ref{filtr'}). Then $h\left(  e_{\tau_{i}}\mathrm{Fil}^{j}%
\mathcal{D}\left(  \underline{a},\underline{x}\right)  \right)  =e_{\tau_{i}%
}\mathrm{Fil}^{j}\mathcal{D}\left(  \underline{a},\underline{y}\right)  $ is
equivalent to%
\[
E\left(  e_{0}+y_{1}(i)e_{1}+y_{2}^{\prime\prime}(i)e_{2}\right)  =E\left(
x_{2}^{\prime\prime}(i)h_{13}(i)e_{0}+x_{1}(i)h_{22}(i)e_{1}+h_{31}%
(i)e_{2}\right)
\]
which is equivalent to$\ x_{1}(i)h_{22}(i)=y_{1}(i)x_{2}^{\prime\prime
}(i)h_{13}(i),\ $and$\ h_{31}(i)=y_{2}^{\prime\prime}(i)x_{2}^{\prime\prime
}(i)h_{13}(i).\ $Hence%
\[
h\left(  \mathrm{Fil}^{j}e_{\tau_{i}}\mathcal{D}\left(  \underline
{a},\underline{x}\right)  \right)  =\mathrm{Fil}^{j}e_{\tau_{i}}%
\mathcal{D}\left(  \underline{a},\underline{y}\right)
\]
if and only if $\ y_{2}^{\prime\prime}(i)=x_{2}^{\prime\prime}(i)=1$ and
$y_{1}(i)=x_{1}(i).$

$\left(  6\right)  \ $If $\mathrm{Nm}_{\varphi}(\vec{a})=\mathrm{Nm}_{\varphi
}(\vec{b}_{1}),$ $\mathrm{Nm}_{\varphi}(\vec{b})=\mathrm{Nm}_{\varphi}(\vec
{c}_{1}),$ $\mathrm{Nm}_{\varphi}(\vec{c})=\mathrm{Nm}_{\varphi}(\vec{a}%
_{1}).$ Then
\[
H=\left(
\begin{array}
[c]{ccc}%
\vec{0} & \vec{0} & \vec{h}_{13}\\
\vec{h}_{21} & \vec{0} & \vec{0}\\
\vec{0} & \vec{h}_{32} & \vec{0}%
\end{array}
\right)
\]
and%
\[
\left(  he_{0},he_{1},he_{2}\right)  =\left(
\begin{array}
[c]{ccc}%
\vec{h}_{21}e_{1}, & \vec{h}_{32}e_{2}, & \vec{h}_{13}e_{0}%
\end{array}
\right)  .
\]
$\left(  a\right)  $ If the filtration of $e_{\tau_{i}}\mathrm{Fil}%
^{j}\mathcal{D}\left(  \underline{a},\underline{x}\right)  $ is given by
formula (\ref{1}). Then $h\left(  e_{\tau_{i}}\mathrm{Fil}^{j}\mathcal{D}%
\left(  \underline{a},\underline{x}\right)  \right)  =e_{\tau_{i}}%
\mathrm{Fil}^{j}\mathcal{D}\left(  \underline{a},\underline{y}\right)  $ is
equivalent to%
\begin{align}
E\left(  e_{0}+y_{1}(i)e_{1}+y_{2}^{\prime\prime}(i)e_{2}\right)   &
=E\left(  x_{2}^{\prime\prime}(i)h_{13}(i)e_{0}+h_{21}(i)e_{1}+x_{1}%
(i)h_{32}(i)e_{2}\right)  ,\label{18'}\\
\text{where }x_{2}^{\prime\prime}(i)  &  =x_{2}(i)+x_{1}(i)x_{2}^{\prime
}(i)\ \text{and }y_{2}^{\prime\prime}(i)=y_{2}(i)+y_{1}(i)y_{2}^{\prime
}(i),\text{\ and}\nonumber\\
E\left(  e_{0}+y_{2}(i)e_{2}\right)  \oplus E\left(  e_{1}+y_{2}^{\prime
}(i)e_{2}\right)   &  =E\left(  x_{2}(i)h_{13}(i)e_{0}+h_{21}(i)e_{1}\right)
\oplus E\left(  x_{2}^{\prime}(i)h_{13}(i)e_{0}+h_{32}(i)e_{2}\right)  .
\label{19'}%
\end{align}
Equation $\left(  \ref{18}\right)  $ is equivalent to $h_{21}(i)e_{1}%
+x_{1}(i)h_{32}(i)e_{2}=y_{1}(i)x_{2}^{\prime\prime}(i)h_{13}(i)e_{1}%
+y_{2}^{\prime\prime}(i)x_{2}^{\prime\prime}(i)h_{13}(i)e_{2}$ if and only if
\begin{align*}
h_{21}(i)  &  =y_{1}(i)x_{2}^{\prime\prime}(i)h_{13}(i)\\
x_{1}(i)h_{32}(i)  &  =y_{2}^{\prime\prime}(i)x_{2}^{\prime\prime}(i)h_{13}(i)
\end{align*}
and equation $\left(  \ref{19'}\right)  $ is equivalent to
\begin{align*}
x_{2}(i)h_{13}(i)e_{0}+h_{21}(i)e_{1}  &  =\lambda_{1}e_{0}+\mu_{1}%
e_{1}+\left(  \lambda_{1}y_{2}(i)+\mu_{1}y_{2}^{\prime}(i)\right)  e_{2}\\
x_{2}^{\prime}(i)h_{13}(i)e_{0}+h_{32}(i)e_{2}  &  =\lambda_{2}e_{0}%
+\lambda_{2}y_{2}(i)e_{2}%
\end{align*}
for some $\lambda_{i},\mu_{i}\in E.$ We have$\ h_{21}(i)=y_{1}(i)x_{2}%
^{\prime\prime}(i)h_{13}(i),\ x_{1}(i)h_{32}(i)=y_{2}^{\prime\prime}%
(i)x_{2}^{\prime\prime}(i)h_{13}(i),\ x_{2}(i)y_{2}(i)h_{13}(i)+y_{2}^{\prime
}(i)h_{21}(i)=0,\ $and$\ x_{2}^{\prime}(i)y_{2}(i)h_{13}(i)=h_{32}(i).\ $Then
$x_{2}^{\prime}(i)=x_{2}^{\prime\prime}(i)=y_{2}(i)=1$ and$\ h_{21}%
(i)=y_{1}(i)h_{13}(i),\ h_{32}(i)=h_{13}(i),\ x_{1}(i)h_{32}(i)=y_{2}%
^{\prime\prime}(i)\left(  x_{1}(i)+x_{2}(i)\right)  h_{13}(i),\ x_{2}%
(i)y_{2}(i)h_{13}(i)+y_{2}^{\prime}(i)h_{21}(i)=0.\ $

$\left(  i\right)  $ If $y_{2}^{\prime}(i)=x_{2}(i)=0$, then we must have
$x_{2}^{\prime}(i)=y_{1}(i)=y_{2}(i)=1,$ $x_{1}(i)y_{1}(i)\neq0,$
$h_{21}(i)=y_{1}(i)x_{2}^{\prime\prime}(i)h_{13}(i),$ and $h_{32}%
(i)=y_{2}(i)h_{13}(i).$ Then $h\left(  e_{\tau_{i}}\mathrm{Fil}^{j}%
\mathcal{D}\left(  \underline{a},\underline{x}\right)  \right)  =e_{\tau_{i}%
}\mathrm{Fil}^{j}\mathcal{D}\left(  \underline{a},\underline{y}\right)  $ if
and only if
\[
x_{2}^{\prime}(i)=y_{2}(i)=1,\ y_{2}^{\prime}(i)=x_{2}(i)=0,\ x_{1}%
(i)y_{1}(i)\neq0.
\]
$\left(  ii\right)  $ If $y_{2}^{\prime}(i)=x_{2}^{\prime}(i)=1.$ Thenwe must
have $x_{2}(i)=y_{2}(i)=1,\ y_{1}(i)\left(  x_{1}(i)+1\right)  +1=0,\ h_{21}%
(i)=-h_{13}(i),$ and $h_{32}(i)=h_{13}(i).$ Hence $h\left(  e_{\tau_{i}%
}\mathrm{Fil}^{j}\mathcal{D}\left(  \underline{a},\underline{x}\right)
\right)  =e_{\tau_{i}}\mathrm{Fil}^{j}\mathcal{D}\left(  \underline
{a},\underline{y}\right)  $ if and only if%
\[
x_{2}^{\prime}(i)=y_{2}^{\prime}(i)=x_{2}(i)=y_{2}(i)=1,\ \text{and }%
y_{1}(i)\left(  x_{1}(i)+1\right)  +1=0.
\]
$\left(  b\right)  $ If the filtration of $e_{\tau_{i}}\mathrm{Fil}%
^{j}\mathcal{D}\left(  \underline{a},\underline{x}\right)  $ is given by
formula (\ref{fil1}). Then $h\left(  e_{\tau_{i}}\mathrm{Fil}^{j}%
\mathcal{D}\left(  \underline{a},\underline{x}\right)  \right)  =e_{\tau_{i}%
}\mathrm{Fil}^{j}\mathcal{D}\left(  \underline{a},\underline{y}\right)  $ is
equivalent to%
\[
E\left(  e_{0}+y_{2}(i)e_{2}\right)  \oplus E\left(  e_{1}+y_{2}^{\prime
}(i)e_{2}\right)  =E\left(  x_{2}(i)h_{13}(i)e_{0}+h_{21}(i)e_{1}\right)
\oplus E\left(  x_{2}^{\prime}(i)h_{13}(i)e_{0}+h_{32}(i)e_{2}\right)
\]
which is equivalent to
\begin{align*}
x_{2}(i)h_{13}(i)e_{0}+h_{21}(i)e_{1}  &  =\lambda_{1}e_{0}+\mu_{1}%
e_{1}+\left(  \lambda_{1}y_{2}(i)+\mu_{1}y_{2}^{\prime}(i)\right)  e_{2}\\
x_{2}^{\prime}(i)h_{13}(i)e_{0}+h_{32}(i)e_{2}  &  =\lambda_{2}e_{0}%
+\lambda_{2}y_{2}(i)e_{2}%
\end{align*}
for some $\lambda_{i},\mu_{i}\in E.$ We must have $y_{2}(i)x_{2}%
(i)h_{13}(i)+h_{21}(i)y_{2}^{\prime}(i)=0$ and $y_{2}(i)x_{2}^{\prime
}(i)h_{13}(i)=h_{32}(i).$ Then $h\left(  e_{\tau_{i}}\mathrm{Fil}%
^{j}\mathcal{D}\left(  \underline{a},\underline{x}\right)  \right)
=e_{\tau_{i}}\mathrm{Fil}^{j}\mathcal{D}\left(  \underline{a},\underline
{y}\right)  $ if and only if $y_{2}(i)=x_{2}^{\prime}(i)=1$ and $x_{2}%
(i)=y_{2}^{\prime}(i).$

$\left(  c\right)  $ If the filtration of $e_{\tau_{i}}\mathrm{Fil}%
^{j}\mathcal{D}\left(  \underline{a},\underline{x}\right)  $ is given by
formula (\ref{filtr'}). Then $h\left(  e_{\tau_{i}}\mathrm{Fil}^{j}%
\mathcal{D}\left(  \underline{a},\underline{x}\right)  \right)  =e_{\tau_{i}%
}\mathrm{Fil}^{j}\mathcal{D}\left(  \underline{a},\underline{y}\right)  $ is
equivalent to%
\[
E\left(  e_{0}+y_{1}(i)e_{1}+y_{2}^{\prime\prime}(i)e_{2}\right)  =E\left(
x_{2}^{\prime\prime}(i)h_{13}(i)e_{0}+h_{21}(i)e_{1}+x_{1}(i)h_{32}%
(i)e_{2}\right)
\]
which is equivalent to$\ h_{21}(i)=y_{1}(i)x_{2}^{\prime\prime}(i)h_{13}(i),$
and$\ x_{1}(i)h_{32}(i)=y_{2}^{\prime\prime}(i)x_{2}^{\prime\prime}%
(i)h_{13}(i).\ $Then $h\left(  e_{\tau_{i}}\mathrm{Fil}^{j}\mathcal{D}\left(
\underline{a},\underline{x}\right)  \right)  =e_{\tau_{i}}\mathrm{Fil}%
^{j}\mathcal{D}\left(  \underline{a},\underline{y}\right)  $ if and only if
$y_{1}(i)=x_{2}^{\prime\prime}(i)=1$ and $x_{1}(i)=y_{2}^{\prime\prime}%
(i).$\bigskip
\end{proof}

\begin{proof}
[Proof of Proposition \ref{semistable}]Let $\left(  D,\varphi,N\right)  $ be a
filtered $\left(  \varphi,N\right)  $-module and let $\left(  D,\varphi
\right)  $ be as in Proposition \ref{phi, fil}. Let $\underline{e}\ $be an
ordered basis such that $\mathrm{Mat}_{\underline{e}}\left(  \varphi\right)
=\mathrm{diag}\left(  \vec{a},\vec{b},\vec{c}\right)  .$ Let
$A:=[N]_{\underline{e}}=\left(  \vec{a}_{ij}\right)  .$ Since $N\varphi
=p\varphi N,$ we have $AP=pP\varphi\left(  A\right)  ,$ where $P:=\mathrm{Mat}%
_{\underline{e}}\left(  \varphi\right)  .$ Hence$\bigskip\allowbreak$%
\begin{align}
\vec{a}\cdot\vec{a}_{11}  &  =p\cdot\vec{a}\cdot\varphi\left(  \vec{a}%
_{11}\right)  ,\ \vec{b}\cdot\vec{a}_{12}=p\cdot\vec{a}\cdot\varphi\left(
\vec{a}_{12}\right)  ,\ \vec{c}\cdot\vec{a}_{13}=p\cdot\vec{a}\cdot
\varphi\left(  \vec{a}_{13}\right)  ,\label{14}\\
\vec{a}\cdot\vec{a}_{21}  &  =p\cdot\vec{b}\cdot\varphi\left(  \vec{a}%
_{21}\right)  ,\ \vec{b}\cdot\vec{a}_{22}=p\cdot\vec{b}\cdot\varphi\left(
\vec{a}_{22}\right)  ,\ \vec{c}\cdot\vec{a}_{23}=p\cdot\vec{b}\cdot
\varphi\left(  \vec{a}_{23}\right)  ,\label{18}\\
\vec{a}\cdot\vec{a}_{31}  &  =p\cdot\vec{c}\cdot\varphi\left(  \vec{a}%
_{31}\right)  ,\ \vec{b}\cdot\vec{a}_{32}=p\cdot\vec{c}\cdot\varphi\left(
\vec{a}_{32}\right)  ,\ \vec{c}\cdot\vec{a}_{33}=p\cdot\vec{c}\cdot
\varphi\left(  \vec{a}_{33}\right)  \label{17}%
\end{align}
for all $i=1,2,3.$ Since the coordinates of the vectors $\vec{a},\vec{b}%
,\vec{c}$ are nonzero, equations $\left(  \ref{14}\right)  ,\left(
\ref{18}\right)  ,\left(  \ref{17}\right)  $ imply that either all the
coordinates of the vectors $\vec{a}_{ij}$ are nonzero or $\vec{a}_{ij}=\vec
{0},$ for any $i,j\in\{1,2,3\}.$ We need the following lemma whose proof is straightforward.
\end{proof}

\begin{lemma}
\label{norm lemma}Let $\vec{\alpha},\vec{\beta}\in\left(  E^{\times}\right)
^{\mid\mathcal{\tau}\mid}.$ The equation $\vec{\alpha}\cdot\vec{\gamma}%
=\vec{\beta}\cdot\varphi(\vec{\gamma})$ has nonzero solutions $\vec{\gamma}\in
E^{\mid\mathcal{\tau}\mid}\ $if and only if $\mathrm{Nm}_{\varphi}(\vec
{\alpha})=\mathrm{Nm}_{\varphi}(\vec{\beta}).$ In this case, all the solutions
are \noindent$\vec{\gamma}=\gamma\left(  1,\frac{\alpha_{0}}{\beta_{0}}%
,\frac{\alpha_{0}\alpha_{1}}{\beta_{0}\beta_{1}},...,\frac{\alpha_{0}%
\alpha_{1}\cdots\alpha_{f-2}}{\beta_{0}\beta_{1}\cdots\beta_{f-2}}\right)  $
for any $\gamma\in E.$
\end{lemma}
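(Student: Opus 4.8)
The plan is to rewrite the vector identity coordinate by coordinate and solve the resulting scalar recursion. Writing $\vec{\gamma} = (\gamma(0),\gamma(1),\dots,\gamma(f-1))$ and using that the $i$-th coordinate of $\varphi(\vec{\gamma})$ is $\gamma(i+1)$ with indices read modulo $f$, the equation $\vec{\alpha}\cdot\vec{\gamma} = \vec{\beta}\cdot\varphi(\vec{\gamma})$ becomes the system $\alpha(i)\gamma(i) = \beta(i)\gamma(i+1)$ for $i=0,1,\dots,f-1$, equivalently $\gamma(i+1) = \frac{\alpha(i)}{\beta(i)}\gamma(i)$ for all $i$ modulo $f$, where the division makes sense because the coordinates of $\vec{\beta}$ lie in $E^{\times}$.

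First I would note that any nonzero solution has all coordinates nonzero: since $\alpha(i),\beta(i)\in E^{\times}$, the relation $\gamma(i+1) = \frac{\alpha(i)}{\beta(i)}\gamma(i)$ propagates a vanishing coordinate both forward and backward around the cycle, so a single zero coordinate forces $\vec{\gamma}=\vec{0}$. Hence a nonzero solution has $\gamma(0)\neq 0$, and running the first $f-1$ equations gives $\gamma(i) = \gamma(0)\prod_{j=0}^{i-1}\frac{\alpha(j)}{\beta(j)}$ for $i=1,\dots,f-1$, which is exactly the claimed closed form with $\gamma := \gamma(0)$.

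It then remains to identify the constraint coming from the last equation $\alpha(f-1)\gamma(f-1)=\beta(f-1)\gamma(0)$, which (after substituting the formula for $\gamma(f-1)$ and cancelling the nonzero $\gamma(0)$) reads $\prod_{j=0}^{f-1}\alpha(j) = \prod_{j=0}^{f-1}\beta(j)$, i.e. $\mathrm{Nm}_{\varphi}(\vec{\alpha}) = \mathrm{Nm}_{\varphi}(\vec{\beta})$. Conversely, when this norm identity holds the displayed vector satisfies all $f$ equations of the system --- the first $f-1$ by construction and the last one precisely because it is the norm identity --- and it is nonzero whenever $\gamma\neq 0$; uniqueness of the solution given $\gamma(0)$ then shows these are all the solutions. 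I do not expect a genuine obstacle here: the whole content is solving a first-order linear recurrence with unit coefficients, and the only point requiring a moment's care is the cyclic closure of the index, which is exactly where the norm condition enters.
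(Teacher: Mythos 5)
Your proof is correct and is exactly the computation the paper has in mind: the paper states the lemma with the remark that its proof is straightforward and omits it, and your coordinate-by-coordinate reduction to the recursion $\gamma(i+1)=\frac{\alpha(i)}{\beta(i)}\gamma(i)$ with the cyclic closure producing the norm condition is that straightforward argument, carried out consistently with the paper's convention $\varphi(x_{0},\dots,x_{f-1})=(x_{1},\dots,x_{f-1},x_{0})$. Nothing is missing; the observation that a single vanishing coordinate forces $\vec{\gamma}=\vec{0}$ correctly handles the ``all solutions'' claim.
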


\begin{proof}
\noindent Since $N\varphi^{f}=p^{f}\varphi^{f}N,$ we have $A\mathrm{Nm}%
_{\varphi}\left(  P\right)  =p^{f}\mathrm{Nm}_{\varphi}\left(  P\right)  A,$
hence%
\begin{align*}
\mathrm{Nm}_{\varphi}(\vec{a})\cdot\vec{a}_{11}  &  =p^{f}\cdot\mathrm{Nm}%
_{\varphi}(\vec{a})\cdot\vec{a}_{11},\ \mathrm{Nm}_{\varphi}(\vec{b})\cdot
\vec{a}_{12}=p^{f}\cdot\mathrm{Nm}_{\varphi}(\vec{a})\cdot\vec{a}%
_{12},\ \mathrm{Nm}_{\varphi}(\vec{c})\cdot\vec{a}_{13}=p^{f}\cdot
\mathrm{Nm}_{\varphi}(\vec{a})\cdot\vec{a}_{13},\\
\mathrm{Nm}_{\varphi}(\vec{a})\cdot\vec{a}_{21}  &  =p^{f}\cdot\mathrm{Nm}%
_{\varphi}(\vec{b})\cdot\vec{a}_{21},\ \mathrm{Nm}_{\varphi}(\vec{b})\cdot
\vec{a}_{22}=p^{f}\cdot\mathrm{Nm}_{\varphi}(\vec{b})\cdot\vec{a}%
_{22},\ \mathrm{Nm}_{\varphi}(\vec{c})\cdot\vec{a}_{23}=p^{f}\cdot
\mathrm{Nm}_{\varphi}(\vec{b})\cdot\vec{a}_{23},\\
\mathrm{Nm}_{\varphi}(\vec{a})\cdot\vec{a}_{31}  &  =p^{f}\cdot\mathrm{Nm}%
_{\varphi}(\vec{c})\cdot\vec{a}_{31},\ \mathrm{Nm}_{\varphi}(\vec{b})\cdot
\vec{a}_{32}=p^{f}\cdot\mathrm{Nm}_{\varphi}(\vec{c})\cdot\vec{a}%
_{32},\ \mathrm{Nm}_{\varphi}(\vec{c})\cdot\vec{a}_{33}=p^{f}\cdot
\mathrm{Nm}_{\varphi}(\vec{c})\cdot\vec{a}_{33}.
\end{align*}
Since the eigenvalues of frobenius are distinct, $\vec{a}_{11}=\vec{a}%
_{22}=\vec{a}_{33}=\vec{0}.$ Moreover, if $\vec{a}_{ij}\neq\vec{0}$ then
$\vec{a}_{ji}=\vec{0}$ for all $i,j.$ We show that at most two entries of
$[N]_{\underline{e}}$ are nonzero and give their precise formulas.

$\left(  i\right)  $ If $\vec{a}_{12}\neq\vec{0},$ then $\mathrm{Nm}_{\varphi
}(\vec{b})=p^{f}\mathrm{Nm}_{\varphi}(\vec{a})$ and the equation $\vec{b}%
\cdot\vec{a}_{12}=p\cdot\vec{a}\cdot\varphi\left(  \vec{a}_{12}\right)  $ and
Lemma \ref{norm lemma} imply that\bigskip%
\[
\vec{a}_{12}=a_{12}\left(  1,\frac{b\left(  0\right)  }{pa\left(  0\right)
},\frac{b\left(  0\right)  b\left(  1\right)  }{p^{2}a\left(  0\right)
a\left(  1\right)  },\cdots,\frac{b\left(  0\right)  b\left(  1\right)  \cdots
b\left(  f-2\right)  }{p^{f-1}a\left(  0\right)  a\left(  1\right)  \cdots
a\left(  f-2\right)  }\right)
\]
for some $a_{12}\in E^{\times}.$ If $\vec{a}_{13}\neq\vec{0},$ then
$\mathrm{Nm}_{\varphi}(\vec{c})=p^{f}\mathrm{Nm}_{\varphi}(\vec{a})$ a
contradiction since the eigenvalues of frobenius are distinct. Hence $\vec
{a}_{13}=\vec{0},$ and similarly $\vec{a}_{32}=\vec{0}\ $and $\vec{a}%
_{21}=\vec{0}.$

$\left(  ia\right)  $ If $\vec{a}_{31}\neq\vec{0},$ then $\mathrm{Nm}%
_{\varphi}(\vec{a})=p^{f}\mathrm{Nm}_{\varphi}(\vec{c})$ and the equation
$\vec{a}\cdot\vec{a}_{31}=p\cdot\vec{c}\cdot\varphi\left(  \vec{a}%
_{31}\right)  $ and Lemma \ref{norm lemma} imply that
\[
\vec{a}_{31}=a_{31}\left(  1,\frac{a\left(  0\right)  }{pc\left(  0\right)
},\frac{a\left(  0\right)  \alpha\left(  1\right)  }{p^{2}c\left(  0\right)
c\left(  1\right)  },\cdots,\frac{a\left(  0\right)  \alpha\left(  1\right)
\cdots a\left(  f-2\right)  }{p^{f-1}c\left(  0\right)  c\left(  1\right)
\cdots c\left(  f-2\right)  }\right)
\]
for some $a_{31}\in E^{\times}.$

$\left(  ib\right)  $ If $\vec{a}_{23}\neq\vec{0},$ then $\mathrm{Nm}%
_{\varphi}(\vec{c})=p^{f}\mathrm{Nm}_{\varphi}(\vec{b})$ and and the equation
$\vec{c}\cdot\vec{a}_{23}=p\cdot\vec{b}\cdot\varphi\left(  \vec{a}%
_{23}\right)  $ and Lemma \ref{norm lemma} imply that%
\[
\vec{a}_{23}=a_{23}\left(  1,\frac{c\left(  0\right)  }{pb\left(  0\right)
},\frac{c\left(  0\right)  c\left(  1\right)  }{p^{2}b\left(  0\right)
b\left(  1\right)  },\cdots,\frac{c\left(  0\right)  c\left(  1\right)  \cdots
c\left(  f-2\right)  }{p^{f-1}b\left(  0\right)  b\left(  1\right)  \cdots
b\left(  f-2\right)  }\right)
\]
for some $a_{23}\in E^{\times}.$ Notice that at most one of the $\vec{a}_{31}$
and $\vec{a}_{23}$ can be nonzero. Hence%
\[
\lbrack N]_{\underline{e}}=\left(
\begin{array}
[c]{ccc}%
\vec{0} & \vec{a}_{12} & \vec{0}\\
\vec{0} & \vec{0} & \vec{a}_{23}\\
\vec{a}_{31} & \vec{0} & \vec{0}%
\end{array}
\right)
\]
with%
\begin{align*}
\vec{a}_{12}  &  =a_{12}\left(  1,\frac{b\left(  0\right)  }{pa\left(
0\right)  },\frac{b\left(  0\right)  b\left(  1\right)  }{p^{2}a\left(
0\right)  a\left(  1\right)  },\cdots,\frac{b\left(  0\right)  b\left(
1\right)  \cdots b\left(  f-2\right)  }{p^{f-1}a\left(  0\right)  a\left(
1\right)  \cdots a\left(  f-2\right)  }\right)  \ \text{for some }a_{12}\in
E,\\
\vec{a}_{31}  &  =a_{31}\left(  1,\frac{a\left(  0\right)  }{pc\left(
0\right)  },\frac{a\left(  0\right)  \alpha\left(  1\right)  }{p^{2}c\left(
0\right)  c\left(  1\right)  },\cdots,\frac{a\left(  0\right)  \alpha\left(
1\right)  \cdots a\left(  f-2\right)  }{p^{f-1}c\left(  0\right)  c\left(
1\right)  \cdots c\left(  f-2\right)  }\right)  \ \text{for some }a_{31}\in
E,\\
\vec{a}_{23}  &  =a_{23}\left(  1,\frac{c\left(  0\right)  }{pb\left(
0\right)  },\frac{c\left(  0\right)  c\left(  1\right)  }{p^{2}b\left(
0\right)  b\left(  1\right)  },\cdots,\frac{c\left(  0\right)  c\left(
1\right)  \cdots c\left(  f-2\right)  }{p^{f-1}b\left(  0\right)  b\left(
1\right)  \cdots b\left(  f-2\right)  }\right)  \ \text{for some }a_{23}\in E.
\end{align*}
If $a_{12}\neq0$ then $\mathrm{Nm}_{\varphi}(\vec{b})=p^{f}\mathrm{Nm}%
_{\varphi}(\vec{a}),$ if $a_{31}\neq0$ then $\mathrm{Nm}_{\varphi}(\vec
{a})=p^{f}\mathrm{Nm}_{\varphi}(\vec{c})$ and if $a_{23}\neq0$ then
$\mathrm{Nm}_{\varphi}(\vec{c})=p^{f}\mathrm{Nm}_{\varphi}(\vec{b})$ and at
most two of the entries of the matrix of $[N]_{\underline{e}}$ are nonzero.
Arguing similarly for the remaining possibilities we see that $[N]_{\underline
{e}}$ has one of the following shapes:

$\left(  i\right)  \ $%
\[
\lbrack N]_{\underline{e}}=\left(
\begin{array}
[c]{ccc}%
\vec{0} & \vec{a}_{12} & \vec{0}\\
\vec{0} & \vec{0} & \vec{a}_{23}\\
\vec{a}_{31} & \vec{0} & \vec{0}%
\end{array}
\right)
\]
with%
\begin{align*}
\vec{a}_{12}  &  =a_{12}\left(  1,\frac{b\left(  0\right)  }{pa\left(
0\right)  },\frac{b\left(  0\right)  b\left(  1\right)  }{p^{2}a\left(
0\right)  a\left(  1\right)  },\cdots,\frac{b\left(  0\right)  b\left(
1\right)  \cdots b\left(  f-2\right)  }{p^{f-1}a\left(  0\right)  a\left(
1\right)  \cdots a\left(  f-2\right)  }\right)  \ \text{for some }a_{12}\in
E,\\
\vec{a}_{31}  &  =a_{31}\left(  1,\frac{a\left(  0\right)  }{pc\left(
0\right)  },\frac{a\left(  0\right)  \alpha\left(  1\right)  }{p^{2}c\left(
0\right)  c\left(  1\right)  },\cdots,\frac{a\left(  0\right)  \alpha\left(
1\right)  \cdots a\left(  f-2\right)  }{p^{f-1}c\left(  0\right)  c\left(
1\right)  \cdots c\left(  f-2\right)  }\right)  \ \text{for some }a_{31}\in
E,\\
\vec{a}_{23}  &  =a_{23}\left(  1,\frac{c\left(  0\right)  }{pb\left(
0\right)  },\frac{c\left(  0\right)  c\left(  1\right)  }{p^{2}b\left(
0\right)  b\left(  1\right)  },\cdots,\frac{c\left(  0\right)  c\left(
1\right)  \cdots c\left(  f-2\right)  }{p^{f-1}b\left(  0\right)  b\left(
1\right)  \cdots b\left(  f-2\right)  }\right)  \ \text{for some }a_{23}\in E.
\end{align*}
If $a_{12}\neq0$ then $\mathrm{Nm}_{\varphi}(\vec{b})=p^{f}\mathrm{Nm}%
_{\varphi}(\vec{a}),$ if $a_{31}\neq0$ then $\mathrm{Nm}_{\varphi}(\vec
{a})=p^{f}\mathrm{Nm}_{\varphi}(\vec{c})$ and if $a_{23}\neq0$ then
$\mathrm{Nm}_{\varphi}(\vec{c})=p^{f}\mathrm{Nm}_{\varphi}(\vec{b}).$ At most
two of the entries of the matrix of $[N]_{\underline{e}}$ are nonzero.

$\left(  ii\right)  $%
\[
\lbrack N]_{\underline{e}}=\left(
\begin{array}
[c]{ccc}%
\vec{0} & \vec{0} & \vec{a}_{13}\\
\vec{a}_{21} & \vec{0} & \vec{0}\\
\vec{0} & \vec{a}_{32} & \vec{0}%
\end{array}
\right)
\]
with%
\begin{align*}
\vec{a}_{13}  &  =a_{13}\left(  1,\frac{c\left(  0\right)  }{pa\left(
0\right)  },\frac{c\left(  0\right)  c\left(  1\right)  }{p^{2}a\left(
0\right)  a\left(  1\right)  },\cdots,\frac{c\left(  0\right)  c\left(
1\right)  \cdots c\left(  f-2\right)  }{p^{f-1}a\left(  0\right)  a\left(
1\right)  \cdots a\left(  f-2\right)  }\right)  \ \text{for some }a_{13}\in
E,\\
\vec{a}_{21}  &  =a_{21}\left(  1,\frac{a\left(  0\right)  }{pb\left(
0\right)  },\frac{a\left(  0\right)  a\left(  1\right)  }{p^{2}b\left(
0\right)  b\left(  1\right)  },\cdots,\frac{a\left(  0\right)  a\left(
1\right)  \cdots a\left(  f-2\right)  }{p^{f-1}b\left(  0\right)  b\left(
1\right)  \cdots b\left(  f-2\right)  }\right)  \ \text{for some }a_{21}\in
E,\\
\vec{a}_{32}  &  =a_{32}\left(  1,\frac{b\left(  0\right)  }{pc\left(
0\right)  },\frac{b\left(  0\right)  b\left(  1\right)  }{p^{2}c\left(
0\right)  c\left(  1\right)  },\cdots,\frac{b\left(  0\right)  b\left(
1\right)  \cdots b\left(  f-2\right)  }{p^{f-1}c\left(  0\right)  c\left(
1\right)  \cdots c\left(  f-2\right)  }\right)  \ \text{for some }a_{32}\in E.
\end{align*}
If $a_{13}\neq0$ then $\mathrm{Nm}_{\varphi}(\vec{c})=p^{f}\mathrm{Nm}%
_{\varphi}(\vec{a}),$ if $a_{21}\neq0$ then $\mathrm{Nm}_{\varphi}(\vec
{c})=p^{f}\mathrm{Nm}_{\varphi}(\vec{b})$ and if $a_{32}\neq0$ then
$\mathrm{Nm}_{\varphi}(\vec{b})=p^{f}\mathrm{Nm}_{\varphi}(\vec{c}).$ At most
two of the entries of the matrix of $[N]_{\underline{e}}$ are nonzero.

\bigskip$\left(  iii\right)  $%
\[
\lbrack N]_{\underline{e}}=\left(
\begin{array}
[c]{ccc}%
\vec{0} & \vec{a}_{12} & \vec{0}\\
\vec{0} & \vec{0} & \vec{a}_{23}\\
\vec{a}_{31} & \vec{0} & \vec{0}%
\end{array}
\right)
\]
with%
\begin{align*}
\vec{a}_{12}  &  =a_{12}\left(  1,\frac{b\left(  0\right)  }{pa\left(
0\right)  },\frac{b\left(  0\right)  b\left(  1\right)  }{p^{2}a\left(
0\right)  a\left(  1\right)  },\cdots,\frac{b\left(  0\right)  b\left(
1\right)  \cdots b\left(  f-2\right)  }{p^{f-1}a\left(  0\right)  a\left(
1\right)  \cdots a\left(  f-2\right)  }\right)  \ \text{for some }a_{12}\in
E,\\
\vec{a}_{23}  &  =a_{23}\left(  1,\frac{c\left(  0\right)  }{pb\left(
0\right)  },\frac{c\left(  0\right)  c\left(  1\right)  }{p^{2}b\left(
0\right)  b\left(  1\right)  },\cdots,\frac{c\left(  0\right)  c\left(
1\right)  \cdots c\left(  f-2\right)  }{p^{f-1}b\left(  0\right)  b\left(
1\right)  \cdots b\left(  f-2\right)  }\right)  \ \text{for some }a_{23}\in
E,\\
\vec{a}_{31}  &  =a_{31}\left(  1,\frac{a\left(  0\right)  }{pc\left(
0\right)  },\frac{a\left(  0\right)  a\left(  1\right)  }{p^{2}c\left(
0\right)  c\left(  1\right)  },\cdots,\frac{a\left(  0\right)  a\left(
1\right)  \cdots a\left(  f-2\right)  }{p^{f-1}c\left(  0\right)  c\left(
1\right)  \cdots c\left(  f-2\right)  }\right)  \ \text{for some }a_{31}\in E.
\end{align*}
If $a_{12}\neq0$ then $\mathrm{Nm}_{\varphi}(\vec{b})=p^{f}\mathrm{Nm}%
_{\varphi}(\vec{a}),$ if $a_{23}\neq0$ then $\mathrm{Nm}_{\varphi}(\vec
{c})=p^{f}\mathrm{Nm}_{\varphi}(\vec{b})$ and if $a_{31}\neq0$ then
$\mathrm{Nm}_{\varphi}(\vec{a})=p^{f}\mathrm{Nm}_{\varphi}(\vec{c}).$ At most
two of the entries of the matrix of $[N]_{\underline{e}}$ are nonzero.
\end{proof}

\end{document}